\def\be#1\ee{\begin{equation}#1\end{equation}}
\theoremstyle{plain}
\newtheorem{Thm}{Theorem}
\newtheorem{proposition}{Proposition}[section]
\newtheorem{lemma}[proposition]{Lemma}
\newtheorem*{lemma*}{Lemma}
\newtheorem*{Theorem*}{Theorem}
\newtheorem{thm}[proposition]{Theorem}
\theoremstyle{definition}
\newtheorem{remark}{Remark}[section]
\newtheorem*{remark*}{Remark}
\newtheorem*{example*}{Example}
\def\printname#1{
    \if\draft
        \smash{\makebox[0pt]{\hspace{-0.5in}
            \raisebox{8pt}{\tt\tiny #1}}}
    \fi
}
\newlength{\standardunitlength}
\long\def\@makecaption#1#2{%
     \vskip 10pt

\setbox\@tempboxa\hbox{
       \small\sf{\bfcaptionfont #1. }\ignorespaces #2}%
     \ifdim \wd\@tempboxa >\captionwidth {%
         \rightskip=\@captionmargin\leftskip=\@captionmargin
         \unhbox\@tempboxa\par}%
       \else
         \hbox to\hsize{\hfil\box\@tempboxa\hfil}%
     \fi}
\font\bfcaptionfont=cmssbx10 scaled \magstephalf
\newdimen\@captionmargin\@captionmargin=2\parindent
\newdimen\captionwidth\captionwidth=\hsize
\newcommand{\im}{\operatorname{Im}}
\newcommand{\cC}{{\mathcal C}}
\newcommand{\Tor}{{\operatorname{\mathfrak {tor}}}}
\newcommand{\beq}{\begin{equation}}
\newcommand{\eeq}{\end{equation}}
\newcommand{\bThm}{\begin{Thm}}
\newcommand{\eThm}{\end{Thm}}
\def\BZ{\mathbb Z}
\def\BQ{\mathbb Q}
\def\BR{\mathbb R}
\def\BC{\mathbb C}
\def\BS{\mathbb S}
\def\cR{\mathfrak R}
\def\cL{\mathcal L}
\def\cO{\mathcal O}
\def\la{\langle}
\def\ra{\rangle}
\def\vol{\operatorname{vol}}
\def\Inv{\operatorname{Inv}}
\def\bm{\mathbf m }
\newcommand{\bproof}{\begin{proof}}
\newcommand{\blemma}{\begin{lemma}}
\newcommand{\eproof}{\end{proof}}
\newcommand{\elemma}{\end{lemma}}
\def\fB{\Theta}
\def\coker{\operatorname{coker}}
\def\im{\operatorname{Im}}
\def\rk{\operatorname{rk}}
\def\Hom{\mathrm{Hom}}
\def\al{\alpha}
\def\bk{\mathbf{k}}
\def\br{\mathrm {br}}
\def\bz{\mathbf z}
\def\RR{\BZ[A]}
\newcommand{\pr}{\operatorname{pr}}
\def\tal{\tilde \al}
\def\BM{\mathbb M}
\def\Fix{\mathrm{Fix} }
\begin{document}

\title[Homology Growth]{Homology torsion growth and Mahler measure}

\author{Thang Le}
\address{Department of Mathematics, Georgia Institute of Technology,
Atlanta, GA 30332--0160, USA }
\email{letu@math.gatech.edu}

\thanks{The author was supported in part by National Science Foundation. \\
{ \em Key words: torsion growth, Mahler measure, Alexander polynomials, algebraic dynamical system, entropy, pseudo-isomorphism.}\\
MSC: 57M10, 57M25, 57Q10, 37B50, 37B10.
}
\begin{abstract}

We prove a conjecture of K. Schmidt in algebraic dynamical system theory on the growth of the number of components of fixed point sets.
We also generalize a result of Silver and Williams on the growth of homology torsions of finite abelian covering of link complements.
In both cases, the growth is expressed by the Mahler measure of the first non-zero Alexander polynomial of the corresponding modules.
We use the notion of pseudo-isomorphism, and also tools from commutative algebra and algebraic
geometry, to reduce the conjectures to the case of torsion modules.
We also describe concrete sequences which give the expected values of the limits in both cases. For this part we utilize a  result of
Bombieri and Zannier (conjectured before by A. Schinzel) and a result of Lawton (conjectured before by D. Boyd).

\end{abstract}

\maketitle


\section*{Introduction}

\subsection{A conjecture of K. Schmidt} Suppose $M$ is a finitely generated module over the commutative ring $\cR:=\BZ[t_1^{\pm 1},\dots, t_n^{\pm1}]$. Let $\BS$ be the unit circle in
the complex plane $\BC$.   There is a natural action of $\BZ^n$ on the compact abelian group $\hat M = \Hom(M,\BS)$, the Pontryagin dual of $M$. For details on  dynamical systems
of this type the reader is referred to the remarkable book \cite{Schmidt}.
The entropy of this action, denoted by $h(M)$, can be defined in a standard manner. Lind, Schmidt, and Ward \cite{LSW} (see also \cite{EW})  proved that if $M$ is a {\em torsion module}, then
\be
h(M) = \BM (\Delta_0(M)),
\label{e001}
\ee
where $\Delta_0(M)$ is the 0-th Alexander polynomial of $M$ ( also known as the order of $M$), and $\BM(f)$ is the additive Mahler measure of the polynomial $f$. We will recall the definitions of these notions in
Section \ref{not}.

For a subgroup $\Gamma \subset \BZ^n$ of finite index let $\Fix_\Gamma(\hat M)$ be the set of elements of $\hat M$ fixed by actions of elements of $\Gamma$. Then
$\Fix_\Gamma(\hat M)$ is a compact subgroup of $\hat M$ and has a finite number $P_\Gamma(\hat M)$ of connected components.
The following theorem was conjectured by K. Schmidt \cite{Schmidt}, based on  results in the torsion module case.

\begin{Thm} For any finitely generated $\cR$-module $M$ one has
$$ \limsup_{\la \Gamma \ra \to \infty}\frac{\log P_\Gamma(\hat M)}{|\BZ^n /\Gamma|}= h(\Tor (M)). $$
If $n=1$ then one can replace the $\limsup$ by the ordinary $\lim$.
\label{T1}
\end{Thm}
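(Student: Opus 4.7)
My plan is to reduce the statement to the case where $M$ is torsion (where \eqref{e001} applies), and then to construct explicit sublattices $\Gamma_k$ realizing the Mahler measure on the right-hand side.

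First, Pontryagin duality identifies $\Fix_\Gamma(\hat M) = \widehat{M/I_\Gamma M}$, where $I_\Gamma = (t^\gamma - 1 : \gamma \in \Gamma) \subset \cR$. Since the number of connected components of the dual of a finitely generated abelian group $A = \BZ^r \oplus T$ equals $|T|$, one obtains the algebraic formula
$$P_\Gamma(\hat M) = \bigl|\tor_\BZ(M/I_\Gamma M)\bigr|.$$
Because $\cR$ is a Noetherian UFD, I would take a pseudo-isomorphism $\phi: M \to \Tor(M) \oplus \cR^r$ with $\ker\phi$ and $\coker\phi$ pseudo-null. The free summand is inert: $\cR^r / I_\Gamma \cR^r \cong \BZ[\BZ^n/\Gamma]^r$ is $\BZ$-torsion-free, so contributes $P_\Gamma = 1$. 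Applying the $\Tor^\cR_*(-, \cR/I_\Gamma)$ long exact sequence to $\phi$, the reduction to $\Tor(M)$ rests on the key lemma
$$\log \bigl|\tor_\BZ(N / I_\Gamma N)\bigr| = o(|\BZ^n/\Gamma|) \quad \text{as } \la \Gamma \ra \to \infty, \text{ for every pseudo-null } \cR\text{-module } N.$$
Granted this, the torsion case together with the general inequality $h \ge \limsup \log P_\Gamma / |\BZ^n/\Gamma|$ and \eqref{e001} yields the $\le$ direction.

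For the matching lower bound, assume $M$ is torsion with Alexander polynomial $\Delta_0$; then $P_\Gamma(\hat M)$ is essentially $\prod_\chi |\Delta_0(\chi)|$ over the characters $\chi \in \widehat{\BZ^n/\Gamma}$ at which $\Delta_0$ is nonzero. For $n = 1$, Jensen's formula combined with equidistribution of $N$-th roots of unity against $\log|\Delta_0|$ makes $\tfrac{1}{N} \sum_{\zeta^N = 1} \log|\Delta_0(\zeta)| \to \BM(\Delta_0)$ for \emph{every} sequence $N \to \infty$, upgrading the $\limsup$ to an ordinary limit. For $n \ge 2$, I would choose $\Gamma_k$ dual to direction vectors $\mathbf{r}_k \in \BZ^n$ escaping every proper subtorus; Lawton's theorem (Boyd's conjecture) converts the one-variable Mahler measures of the specializations $\Delta_0(t^{r_{k,1}}, \dots, t^{r_{k,n}})$ into $\BM(\Delta_0)$, while Bombieri-Zannier (Schinzel's conjecture) bounds the small values of $\Delta_0$ on torsion points, ensuring that ``bad'' characters contribute subexponentially.

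The main obstacle I expect is the pseudo-null estimate above. Unlike the torsion case, $N$ carries no Alexander polynomial to organize the growth, and the bound must hold uniformly over all $\Gamma$ with $\la \Gamma \ra \to \infty$. My approach would be to filter $N$ by its associated primes, reducing to modules of the form $\cR/\mathfrak{p}$ with $\mathrm{ht}(\mathfrak{p}) \ge 2$, and then bound $|\tor_\BZ(\cR/(\mathfrak{p} + I_\Gamma))|$ geometrically via the scarcity of torsion points on the subvariety $V(\mathfrak{p}) \subset (\BC^*)^n$. For $n = 1$ pseudo-null means finite and the bound is automatic; for $n \ge 2$ one again needs arithmetic-geometric input of Bombieri-Zannier type, so the same deep machinery underpins both the reduction step and the construction of the extremal sequence.
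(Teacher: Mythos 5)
Your outline — Pontryagin duality to convert $P_\Gamma(\hat M)$ into $|\Tor_\BZ(M/I(\Gamma)M)|$, the pseudo-isomorphism $M \to \Tor(M) \oplus \cR^r$ with pseudo-null kernel and cokernel, reduction to Schmidt's torsion case, and Lawton plus Bombieri--Zannier for the extremal sequence — matches the paper's high-level strategy. But there is a genuine gap in the reduction step that you flag as the ``main obstacle'' and then dispose of too quickly.

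When you ``apply the $\operatorname{Tor}^\cR_*(-,\cR/I(\Gamma))$ long exact sequence'' to a short exact sequence $0 \to M_1 \to M_2 \to N \to 0$ with $N$ pseudo-null, you get a four-term exact sequence
$$\operatorname{Tor}_1^{\cR}(N,\BZ[A_\Gamma]) \longrightarrow M_1\otimes\BZ[A_\Gamma] \longrightarrow M_2\otimes\BZ[A_\Gamma] \longrightarrow N\otimes\BZ[A_\Gamma] \longrightarrow 0.$$
Your key lemma controls only the right-hand term $N\otimes\BZ[A_\Gamma] = \operatorname{Tor}_0$, but to compare torsion on the two middle terms you must also control the image of $\operatorname{Tor}_1^{\cR}(N,\BZ[A_\Gamma])$, and this group is generally \emph{infinite}. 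For instance with $n=2$, $N=\cR/(t_1-1,t_2-1)\cong\BZ$ is pseudo-zero, yet $\operatorname{Tor}_1^\cR(\BZ,\BZ[A_\Gamma])$ contains (via the Koszul resolution) the free $\BZ$-module $\BZ[A_\Gamma]\cdot u_1$ of rank $|A_\Gamma|/a$, where $u_1 = 1+t_1+\dots+t_1^{a-1}$; this has unbounded rank as $\la\Gamma\ra\to\infty$. So your filtration by height-$\ge 2$ primes plus ``scarcity of torsion points'' handles the $\operatorname{Tor}_0$ contribution but not the $\operatorname{Tor}_1$ obstruction, and the reduction to the torsion case does not close. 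This is precisely why the paper introduces the auxiliary ideal $\beta(p;\Gamma)\subset\BZ[A_\Gamma]$ (Section~\ref{approx}): it is built, using the Laurent/Schmidt structure theorem on torsion points of the variety $V_p$, so that (i) $\Tor_\BZ(M\otimes\beta(p;\Gamma))$ has the same growth as $\Tor_\BZ(M\otimes\BZ[A_\Gamma])$, and (ii) when $p\in I$ and $\cR/I$ is pseudo-zero, $\operatorname{Tor}_1^{\cR}(\cR/I,\beta(p;\Gamma))$ is actually \emph{finite} and of negligible size. Property (ii) hinges on $p$ acting injectively on $\beta(p;\Gamma)$, a Koszul-complex argument for the intermediate ideal $(p,q)$ with $q$ coprime to $p$ in $I$, and a complexification argument; none of these appears in your sketch, yet they are the technical heart of the proof. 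A secondary caveat: the Bombieri--Zannier input in the paper is used only to show that $\gcd$ commutes with the specialization $\tau_\bk$ up to cyclotomic factors (Section~\ref{last}), not to bound bad characters — the latter role is played by the torsion-point theorem cited above, so you are conflating two different arithmetic inputs that enter at different places.
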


Here $\Tor(M)$ is the torsion submodule of $M$, and
$$ \la \Gamma \ra =   \min \{ |x|, x \in \Gamma \setminus \{0\} \},$$
where $|x|= \sqrt{\sum_i |x_i|^2}$ for $x=(x_1,\dots,x_n) \in \BZ^n$.

The theorem had been proved for the case when $M$ is a torsion module by Schmidt, see \cite[Theorem 21.1]{Schmidt}, and we will make substantial use of this case.

\subsection{A conjecture of Silver and Williams} Suppose $L$ is an oriented link with $n$ ordered components in an oriented  integral homology 3-sphere $Z$, with the complement $X= Z\setminus L$. There is a natural identification
$H_1(X,\BZ) = \BZ^n$. For a subgroup $\Gamma\subset \BZ^n$ of finite index let $X_\Gamma$ be the corresponding abelian covering of $X$, and $X_\Gamma^\br$ the corresponding
branched covering of $Z$. There are defined the Alexander polynomials $\Delta_i(L) \in \cR=\BZ[t_1^{\pm 1},\dots, t_n^{\pm1}]$, $i=0,1,2,\dots$. We will recall the definition of $\Delta_i(L)$ in
section \ref{S4}.

Let $\Delta(L)=\Delta_j(L)$, where $j$ is the smallest
index such that $\Delta_j(L) \neq 0$. For
 an abelian group $G$,  denote $\Tor_\BZ(G)$ the $\BZ$-torsion subgroup of $G$.

\begin{Thm} Notations as above. One has
\be \limsup_{\la \Gamma \ra \to \infty}\frac{\log \left | \Tor_\BZ\big(H_1(X_\Gamma, \BZ) \big)\right|}{|\BZ^n /\Gamma|} =  \BM (\Delta(L)).
\tag{a}
\label{eT2a}
\ee
\be \limsup_{\la \Gamma \ra \to \infty}\frac{\log |\Tor_\BZ(H_1(X_\Gamma^\br, \BZ))|}{|\BZ^n /\Gamma|} =  \BM (\Delta(L)).
\tag{b}
\label{eT2b}
\ee
If $n=1$ then one can replace the $\limsup$ by the ordinary $\lim$.

\label{T2}
\end{Thm}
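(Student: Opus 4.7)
The plan is to reduce Theorem~\ref{T2} to Theorem~\ref{T1} applied to the Alexander module $M := H_1(\widetilde X,\BZ)$, where $\widetilde X$ is the universal free abelian cover of $X$; then $M$ is an $\cR$-module with $\Delta_i(M) = \Delta_i(L)$. Two bridges are required: a topological one identifying $\Tor_\BZ H_1(X_\Gamma,\BZ)$ with the component count $P_\Gamma(\hat M)$ of Theorem~\ref{T1}, and an algebraic one identifying $\BM(\Delta(L))$ with $h(\Tor M)$, which by~\eqref{e001} equals $\BM(\Delta_0(\Tor M))$.

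For the topological bridge, $\Fix_\Gamma(\hat M)$ is Pontryagin dual to the module of coinvariants $M_\Gamma := M / I_\Gamma M$, where $I_\Gamma \subset \cR$ is the ideal generated by $\{t^\gamma - 1 : \gamma \in \Gamma\}$; since the component group of a compact abelian group is the Pontryagin dual of the $\BZ$-torsion of its discrete dual, one has $P_\Gamma(\hat M) = |\Tor_\BZ M_\Gamma|$. On the homology side, the Cartan--Leray spectral sequence for the regular cover $\widetilde X \to X_\Gamma$ with deck group $\Gamma$ identifies $\Tor_\BZ H_1(X_\Gamma,\BZ)$ with $\Tor_\BZ M_\Gamma$ up to a subquotient whose logarithm is $O(\log|\BZ^n/\Gamma|)$. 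Dividing by $|\BZ^n/\Gamma|$ the error disappears, and Theorem~\ref{T1} yields
$$\limsup_{\la\Gamma\ra\to\infty}\frac{\log|\Tor_\BZ H_1(X_\Gamma,\BZ)|}{|\BZ^n/\Gamma|} = h(\Tor M).$$

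For the algebraic bridge one needs $\Delta_0(\Tor M) \doteq \Delta(L) = \Delta_j(M)$ up to factors of Mahler measure zero, where $j$ is the $\cR$-rank of $M$. Via pseudo-isomorphism, $M$ is pseudo-equivalent to $\Tor M \oplus N$ with $N$ torsion-free of rank $j$, and the multiplicativity of Fitting ideals under direct sums gives $\Delta_j(\Tor M \oplus N) \doteq \Delta_0(\Tor M) \cdot \Delta_j(N)$. For $N$ torsion-free of rank $j$ the Fitting ideal $E_j(N)$ contains a unit modulo a pseudo-null ideal of codimension $\geq 2$, and pseudo-null contributions have Mahler measure zero, establishing~\eqref{eT2a}.

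For the branched statement~\eqref{eT2b}, Dehn filling the preimages of tubular neighborhoods of the components of $L$ in $X_\Gamma$ produces $X_\Gamma^\br$ and kills the lifts of the meridians; this replaces $M$ by a quotient $M^\br$ whose first non-vanishing Alexander polynomial differs from $\Delta(L)$ at most by a product of factors $(t_i - 1)$, each of Mahler measure zero, so the same argument applied to $M^\br$ gives~\eqref{eT2b}. The main obstacle, and the technical heart of the reduction, lies in the algebraic bridge: while the topological identification is routine, rigorously controlling the pseudo-null discrepancies in $\Delta_j(M) \doteq \Delta_0(\Tor M)$ and in the pseudo-isomorphism $M \sim \Tor M \oplus N$ requires precisely the commutative-algebra and algebraic-geometry inputs advertised in the abstract.
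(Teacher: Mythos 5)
Your overall strategy --- reduce to Theorem \ref{T1}/\ref{T3} applied to an Alexander module, then handle the branched case by showing the discrepancy is a product of factors of Mahler measure zero --- is the paper's strategy, and you correctly identify that a ``topological bridge'' and an ``algebraic bridge'' are needed. But the two steps you flag as routine are where the paper does real work, and in the branched case your argument has a genuine gap.

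For part (a), you invoke the Cartan--Leray spectral sequence for $\tX\to X_\Gamma$ and assert that the discrepancy between $\Tor_\BZ H_1(X_\Gamma,\BZ)$ and $\Tor_\BZ(M\otimes\BZ[A_\Gamma])$ has logarithm $O(\log|\BZ^n/\Gamma|)$. The five-term sequence gives
$$H_2(X_\Gamma)\to H_2(\Gamma;\BZ)\to M\otimes\BZ[A_\Gamma] \to H_1(X_\Gamma)\to H_1(\Gamma;\BZ)\to 0,$$
and while $H_1(\Gamma;\BZ)\cong\BZ^n$ is torsion-free, the kernel of the middle map is a quotient of $H_2(\Gamma;\BZ)\cong\BZ^{\binom{n}{2}}$ whose torsion you have not bounded; asserting it is negligible requires an argument you do not supply. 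The paper avoids the spectral sequence entirely: it applies Theorem \ref{T33} to the cellular chain complex $\cC(\tX)$, using the elementary fact that for a chain complex $\cD$ of free modules over a domain one has $\Tor_R H_i(\cD)=\Tor_R(\coker\partial_{i+1})$. Since $\coker\partial_{i+1}$ (unlike $H_i$) commutes exactly with $\otimes_\cR\BZ[A_\Gamma]$, this yields the clean identity $\Tor_\BZ H_1(X_\Gamma,\BZ)\cong\Tor_\BZ\big((\coker\partial_2)\otimes\BZ[A_\Gamma]\big)$ with no error term; note the relevant module is $\coker\partial_2$, not $H_1(\tX,\BZ)$, though they share the same $\cR$-torsion and hence the same $\Delta$. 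Separately, the ``algebraic bridge'' you single out as the technical heart is not a difficulty at all: $\Delta_r(M)=\Delta_0(\Tor M)$ with $r=\rk M$ is an exact identity of Alexander polynomials (up to units), recalled as a standard fact in Section \ref{not}, not an identity only up to pseudo-null or Mahler-measure-zero corrections.

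For part (b) the gap is real. You say Dehn filling ``replaces $M$ by a quotient $M^{\br}$'' of the Alexander module and that its first nonzero Alexander polynomial differs from $\Delta(L)$ by factors $(t_i-1)$. But there is no fixed $\cR$-module $M^{\br}$ whose $\Gamma$-coinvariants give $H_1(X_\Gamma^{\br},\BZ)$: the filling relations kill $u_i\,\tilde a_i$ where $u_i=\sum_{\ell=0}^{d_i(\Gamma)-1}t_i^\ell$ and $d_i(\Gamma)$ is the order of $t_i$ in $\BZ^n/\Gamma$, so the relations vary with $\Gamma$ and, as the paper stresses, ``cannot be obtained from a global relation in terms of $\cR$-modules.'' The paper's Proposition \ref{p604} resolves this by constructing a new $\cR$-module $M$ whose presentation matrix is the block matrix built from $\partial_2$, the inclusion $I_{n,m+1}$, and $T=\diag(1-t_1,\dots,1-t_n)$, and proving $\Tor_\BZ H_1(X_\Gamma^{\br},\BZ)\cong\Tor_\BZ(M\otimes\BZ[A_\Gamma])$ by a chain-level diagram chase that uses the fact that the ideals $(u_i)$ and $(1-t_i)$ of $\BZ[A_\Gamma]$ annihilate each other (Lemma \ref{l441}). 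Only after this construction does the formula $\Delta(M)=\Delta(L)\prod_i(1-t_i)$ follow, via Hillman's result on extensions by modules of projective dimension one. Your proposal arrives at the right factors $(1-t_i)$ but skips the construction that makes Theorem \ref{T3} applicable in the branched case, which is precisely the new content of part (b).
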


For the special case when $\Delta(L)= \Delta_0(L)$, part \eqref{eT2b} was proved by Silver and Williams \cite{SW},
who, based on that result, formulated part (b), with the upper limit replaced by the ordinary limit,  as a conjecture. The proof in \cite{SW}  (for the case $\Delta(L) = \Delta_0(L)$), written for $Z=S^3$ and for branched covering only, can be modified for the case of general homology 3-spheres and non-branched coverings. Hence the real new content of Theorem \ref{T2} is the case when $\Delta(L) \neq \Delta_0(L)$. The proof in \cite{SW}
is based on the torsion module case of Theorem 1. It is not surprising that if one can get Theorem 1, then one can generalize the result  of Silver and Williams to the case $\Delta(L) \neq \Delta_0(L)$.

The investigation of the growth of homology torsions of finite covering of knots has a long history, with an interesting conjecture posed by Gordon \cite{Gordon}. The conjecture was proved by
Riley \cite{Riley}  and   Gonzalez-Acuna and  Short
\cite{GS} using Gelfond-Baker results in number theory. Silver and Williams' result mentioned above and Theorem \ref{T2}  are generalizations of Riley and Gonzalez-Acuna and H. Short from the knot to the link case. The surprising appearance of the Mahler measure can be explained from the perspective of $L^2$-torsion theory \cite{Luck_book}: The $L^2$-torsion of the maximal abelian covering of a link complement, at least when $\Delta_0(L) \neq 0$, is the Mahler measure of $\Delta_0(L)$. Theorem \ref{T2}  more or less says that
 the $L^2$-torsion in this case can be approximated by its finite-dimensional counterparts.

In the non-abelian covering case the $L^2$-torsion is given by the hyperbolic volume. So one should expect the similar limit of the left hand side of Equation (a) would give  the hyperbolic volume of the link complements. 
In \cite{Le_tocome}, we will show that, if $L$ is a non-split link in $S^3$,  then

\be \limsup_{\la \Gamma \ra \to \infty} \frac{\log \left | \Tor_\BZ\big(H_1(X_\Gamma, \BZ) \big)\right|}{|\BZ^n /\Gamma|} \le   \frac{\vol(L)}{6\pi}.
\label{eq.202}
\ee
where $\vol(L)$ is the sum of the hyperbolic volumes of the hyperbolic pieces  in the Jaco-Shalen-Johansson decomposition  of $S^3 \setminus L$. Here $\Gamma$ runs the set of all subgroups of $\pi_1(L)$ of finite index,
and $\la \Gamma \ra$ is the minimal word length of $\Gamma \setminus \{1\}$, measured using a fixed finite generator set of $\pi_1$.
In particular, if $\vol(L)=0$, we have the equality in \eqref{eq.202}. For example, if $L$ is a torus knot, then one has equality in \eqref{eq.202}.
For works in this direction see also \cite{Le_slides,Muller,BV}. It is expected that the non-abelian case is much more complicated than the abelian case.

\subsection{An algebraic version of Theorem \ref{T1}} It is not difficult to reformulate Theorem \ref{T1} entirely in terms of module $M$, without going through the Pontryagin dual $\hat M$. We will show that Theorem \ref{T1} is equivalent to the following.

\begin{Thm}  For any finitely generated $\cR$-module $M$ one has
$$ \limsup_{\la \Gamma \ra \to \infty}\frac{\log | \Tor_\BZ (M\otimes \BZ[\BZ^n/\Gamma])  |}{|\BZ^n /\Gamma|}=  \BM (\Delta(\Tor M)). $$
If $n=1$ then one can replace the $\limsup$ by the ordinary $\lim$.
\label{T3}
\end{Thm}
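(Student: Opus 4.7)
The plan is to show that Theorem \ref{T3} is literally the same statement as Theorem \ref{T1}, once both sides are translated via Pontryagin duality and the Lind--Schmidt--Ward formula \eqref{e001}; no new ingredient is needed beyond assembling these two.

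First I would translate the left-hand side. The $\BZ^n$-action on $\hat M=\Hom(M,\BS)$ is dual to the $\cR$-module action on $M$, so a character $\chi:M\to\BS$ is fixed by a finite-index subgroup $\Gamma\subset\BZ^n$ iff it vanishes on $I_\Gamma M$, where $I_\Gamma \subset \cR$ is the ideal generated by $\{t-1:t\in\Gamma\}$. Since $\cR/I_\Gamma \cong \BZ[\BZ^n/\Gamma]$, this yields a topological group isomorphism
$$\Fix_\Gamma(\hat M)\;\cong\;\Hom\big(M\otimes_\cR \BZ[\BZ^n/\Gamma],\,\BS\big).$$
The abelian group $A := M\otimes_\cR \BZ[\BZ^n/\Gamma]$ is finitely generated over $\BZ$, since it is a finitely generated $\BZ[\BZ^n/\Gamma]$-module and $\BZ[\BZ^n/\Gamma]$ is $\BZ$-free of rank $|\BZ^n/\Gamma|$. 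Writing $A\cong\BZ^r\oplus T$ with $T=\Tor_\BZ(A)$ finite, Pontryagin duality gives $\hat A\cong\BS^r\times \hat T$, whose identity component is $\BS^r\times\{0\}$ and whose component group is $\hat T\cong T$. Hence
$$P_\Gamma(\hat M)\;=\;|T|\;=\;\big|\Tor_\BZ\big(M\otimes_\cR \BZ[\BZ^n/\Gamma]\big)\big|,$$
so the numerators in the limsups of Theorems \ref{T1} and \ref{T3} agree.

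For the right-hand side, I would apply \eqref{e001} to $\Tor M$, which is itself a torsion $\cR$-module: this gives $h(\Tor M)=\BM(\Delta_0(\Tor M))$. Because $\Tor M$ is torsion, its $0$-th Alexander polynomial is non-zero, so by definition $\Delta(\Tor M)=\Delta_0(\Tor M)$, and therefore $h(\Tor M)=\BM(\Delta(\Tor M))$. Combining with the previous paragraph, the two theorems become literally the same equation, and the $n=1$ clause translates identically.

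Because the whole argument is a formal translation, there is no real obstacle. The only points to be careful about are that the tensor product must be taken over $\cR$ (so that $M\otimes_\cR \BZ[\BZ^n/\Gamma]$ is the coinvariant quotient $M/I_\Gamma M$ rather than some external $\BZ$-tensor product), and that one should check finite generation of $A$ over $\BZ$ in order to know $\Tor_\BZ(A)$ is finite and accounts for the full component count of $\Fix_\Gamma(\hat M)$.
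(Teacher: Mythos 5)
Your translation between Theorems \ref{T1} and \ref{T3} is correct and matches exactly what the paper does in the preliminaries subsection titled ``Equivalence of Theorem \ref{T1} and Theorem \ref{T3}'': both sides of the two statements are shown to agree via Pontryagin duality (for the left-hand side) and the Lind--Schmidt--Ward formula \eqref{e001} applied to $\Tor M$ (for the right-hand side). The care you take about tensoring over $\cR$ and about finite generation over $\BZ$ is appropriate and is also what the paper does.

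However, there is a fundamental logical gap: Theorem \ref{T1} is not an available prior result --- it is the open conjecture of K.~Schmidt that this paper is proving, and it is proved precisely \emph{by} proving Theorem \ref{T3}. So establishing that Theorem \ref{T3} is equivalent to Theorem \ref{T1} is only a reformulation, not a proof; your argument ends up circular. The one piece of \eqref{e001} that \emph{is} a previously known input is the torsion-module case (Schmidt's Theorem 21.1), but that only handles $M$ torsion. The genuine content of Theorem \ref{T3} is the reduction of the general case to the torsion case: one must show that replacing $M$ by $\Tor M$ does not change the limsup on the left-hand side. The paper does this via (i) Bourbaki's decomposition theorem (Theorem \ref{ps}) that $M$ is pseudo-isomorphic to $\Tor(M)\oplus M/\Tor(M)$; (ii) Theorem \ref{T4}, that pseudo-isomorphic modules have the same torsion growth, whose proof requires the approximation ideals $\beta(p;\Gamma)$ built from torsion points on algebraic varieties (Proposition \ref{mainx}); and (iii) Proposition \ref{T5}, that torsion-free modules contribute negligibly. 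None of this appears in your proposal, and none of it follows ``formally'' from the duality translation; this is where the real work of the theorem lives.
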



  Theorem \ref{T3} is a special case of the following.

\begin{Thm}  Suppose $\cC$ is a chain complex of finitely generated free $\cR$-modules. Then  for every $ i \ge 0$,

$$ \limsup_{\la \Gamma \ra \to \infty}
\frac{\log | \Tor_\BZ (H_i (\cC \otimes  \BZ[\BZ^n / \Gamma]))
|}{|\BZ^n /\Gamma|}=  \BM (\Delta(H_i(
\cC))). $$
If $n=1$ then one can replace the $\limsup$ by the ordinary $\lim$.
\label{T33}
\end{Thm}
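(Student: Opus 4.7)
The plan is to reduce Theorem \ref{T33} to Theorem \ref{T3} via a hyperhomology spectral sequence. Writing $R_\Gamma := \cR/I_\Gamma = \BZ[\BZ^n/\Gamma]$, since each $C_i$ is free (hence flat) the Cartan--Eilenberg / K\"unneth spectral sequence yields a first-quadrant spectral sequence
$$E^2_{p,q} = \mathrm{Tor}^{\cR}_p(H_q(\cC), R_\Gamma) \Longrightarrow H_{p+q}(\cC \otimes R_\Gamma),$$
with only finitely many nonzero rows since $\cR$ has global dimension $n+1$. The bottom-row term $E^2_{0,i} = H_i(\cC) \otimes R_\Gamma$ is handled by Theorem \ref{T3} applied to $M = H_i(\cC)$: its $\BZ$-torsion grows at rate $\BM(\Delta(\Tor(H_i(\cC)))) = \BM(\Delta(H_i(\cC)))$, using that $\Delta(M)$ agrees with $\Delta(\Tor(M))$ up to units for any finitely generated $\cR$-module $M$.

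The central technical input is that the higher Tor terms are asymptotically negligible: for every finitely generated $\cR$-module $N$ and every $p \ge 1$,
$$\limsup_{\la \Gamma \ra \to \infty} \frac{\log |\Tor_\BZ \mathrm{Tor}^{\cR}_p(N, R_\Gamma)|}{|\BZ^n/\Gamma|} = 0.$$
To prove this, fix a $\BZ$-basis $\gamma_1, \dots, \gamma_n$ of $\Gamma$; then $t^{\gamma_1} - 1, \dots, t^{\gamma_n} - 1$ is a regular sequence in $\cR$, so the Koszul complex is a finite free $\cR$-resolution of $R_\Gamma$ and $\mathrm{Tor}^{\cR}_p(N, R_\Gamma)$ is the $p$-th Koszul homology of $N$ against this sequence. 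I would then invoke the pseudo-isomorphism framework developed earlier in the paper to replace $N$, up to a module of support in codimension $\ge 2$, by $\cR^r \oplus \bigoplus_j \cR/(f_j^{k_j})$. Free summands give $\mathrm{Tor}_p = 0$ for $p \ge 1$; each cyclic piece $\cR/(f^k)$ gives $\mathrm{Tor}_1 = R_\Gamma[\bar{f}^k]$, a subgroup of the $\BZ$-free module $R_\Gamma$ and hence $\BZ$-torsion free, and $\mathrm{Tor}_p = 0$ for $p \ge 2$; the codimension-$\ge 2$ error has vanishing Mahler measure and contributes only sub-exponentially by arguments parallel to those used in Theorem \ref{T3}.

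The upper bound of Theorem \ref{T33} now follows by iterating the elementary estimate $|\Tor_\BZ B| \le |\Tor_\BZ A| \cdot |\Tor_\BZ C|$ for short exact sequences $0 \to A \to B \to C \to 0$ of finitely generated abelian groups (a snake-lemma consequence of multiplication by large $n$) through the filtration of $H_i(\cC \otimes R_\Gamma)$ whose graded pieces are $\{E^\infty_{p,i-p}\}$. For the lower bound, I use the concrete sequences $\{\Gamma_k\}$ produced in the proof of Theorem \ref{T3} for $M = H_i(\cC)$, along which $E^2_{0,i}$ already realizes the rate $\BM(\Delta(H_i(\cC)))$. Writing $E^\infty_{0,i} = E^2_{0,i}/B$, where $B$ is the total image of the incoming differentials $d_r : E^r_{r,i-r+1} \to E^r_{0,i}$ for $r \ge 2$, the snake lemma yields $|\Tor_\BZ E^\infty_{0,i}| \ge |\Tor_\BZ E^2_{0,i}| / |\Tor_\BZ B|$. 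Since $B$ is built from subquotients of $\mathrm{Tor}^{\cR}_r(H_{i-r+1}(\cC), R_{\Gamma_k})$ with $r \ge 2$, the central claim forces $|\Tor_\BZ B|$ to be sub-exponential in $|\BZ^n/\Gamma_k|$; combined with the inclusion $E^\infty_{0,i} \subset H_i(\cC \otimes R_{\Gamma_k})$, this delivers the matching lower bound.

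The principal obstacle is the central claim on sub-exponential growth of $\Tor_\BZ \mathrm{Tor}^{\cR}_p(N, R_\Gamma)$ for $p \ge 1$. The difficulty is uniformity in $\Gamma$: pseudo-isomorphism representatives can be chosen once and for all, but $\Gamma$ enters both through the target module $R_\Gamma$ and through the regular sequence defining the Koszul resolution, so the codimension-$\ge 2$ error needs to be controlled by a bound that is insensitive to which $\Gamma$ is used.
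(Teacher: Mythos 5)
Your proposal and the paper take genuinely different routes, and yours has gaps that the paper's route avoids entirely.

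The paper reduces Theorem~\ref{T33} to Theorem~\ref{T3} in one elementary step, without spectral sequences. The key observation (stated as a lemma just before the proof) is that for a chain complex $\cD$ of finitely generated free modules over a domain $R$, one has $\Tor_R(H_i(\cD)) = \Tor_R(\coker \partial_{i+1})$: this follows at once from the short exact sequence $0 \to H_i(\cD) \to R^{m_i}/\im\partial_{i+1} \to R^{m_i}/\ker\partial_i \to 0$, the last term being $R$-torsion-free since it embeds in $R^{m_{i-1}}$. Applying this over $R=\cR$ gives $\Tor_\cR(H_i(\cC)) = \Tor_\cR(M)$ with $M := \coker\partial_{i+1}$, so $\Delta(H_i(\cC)) = \Delta(M)$. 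Applying the same lemma over $R=\BZ$ to the complex $\cC\otimes\BZ[A_\Gamma]$ (a complex of free $\BZ$-modules) gives $\Tor_\BZ(H_i(\cC\otimes\BZ[A_\Gamma])) = \Tor_\BZ(\coker\partial_{i+1,\Gamma}) = \Tor_\BZ(M\otimes\BZ[A_\Gamma])$, the last equality because tensoring is right-exact on a presentation. Theorem~\ref{T3} applied to $M$ now finishes the proof. This completely bypasses the hyperhomology spectral sequence and all control of higher $\mathrm{Tor}^\cR_p$ groups.

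Your spectral-sequence approach has two genuine gaps. First, the ``central claim'' that $|\Tor_\BZ \mathrm{Tor}^\cR_p(N, R_\Gamma)|$ is sub-exponential for all finitely generated $N$ and all $p\ge 1$ is not established by the paper's machinery: Proposition~\ref{mainx}(ii) only treats $i=0,1$, only for $\cR/I$ with $\cR/I$ pseudo-zero, and only against $\beta(p;\Gamma)$ rather than $\BZ[A_\Gamma]$. The pseudo-isomorphism reduction you sketch also leaves uncontrolled the higher $\mathrm{Tor}$ of the pseudo-zero error modules, which have projective dimension up to $n+1$ and contribute to all $\mathrm{Tor}^\cR_p$ with $p\le n+1$; no argument in the paper bounds those. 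Second, and more fundamentally, the passage from $E^2$ to $E^\infty$ and from $E^\infty$ to the filtration of $H_i$ involves taking subquotients, and the $\BZ$-torsion of a subquotient is \emph{not} bounded by the $\BZ$-torsion of the ambient group: $\BZ/N$ is a quotient of $\BZ$, which is torsion-free. So even granting the central claim, the step ``$|\Tor_\BZ E^\infty_{p,q}|$ is sub-exponential because $|\Tor_\BZ E^2_{p,q}|$ is'' is unjustified, and the same defect undermines the estimate of $|\Tor_\BZ B|$ in your lower bound, since $B$ is again a subquotient of the $E^2_{r,i-r+1}$. This is precisely the kind of torsion-bookkeeping problem that the paper's cokernel trick is designed to avoid.
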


In this paper we  will prove Theorem \ref{T3}, and from there deduce Theorem \ref{T2} and \ref{T33}.

\def\tX{\tilde X}

\subsection{Application: abelian covering of $CW$-complex} Suppose $X$ is a finite CW-complex, equipped with a surjection $\rho : H_1(X,\BZ) \to \BZ^n$. Let $\tilde X$ be the abelian covering of $X$ corresponding $\rho$. The $CW$-structure of $X$ lifts to a $CW$-structure of $\tX$.
The group $\BZ^n$ acts as deck transformations on  the covering $\tX$, making the
cellular complex $\cC(\tX)$ of $\tX$  a free finitely-generated $\cR$-complex.

For every subgroup $\Gamma \in \BZ_n$ of finite index let $\rho_\Gamma: \pi_1(X) \to A_\Gamma= \BZ^n/\Gamma$ be the composition
$\pi_1(X) \to H_1(X,\BZ) \overset{\rho}{\longrightarrow} \BZ^n \to A_\Gamma$, where the first map is the abelianization map. Let $X_\Gamma$ be the finite regular covering corresponding to $\rho_\Gamma$.
Apply Theorem \ref{T33} to the complex $\cC(\tX)$ we get the following.

\begin{Thm} Notations as above. Then
$$ \limsup_{\la \Gamma \ra \to \infty}\frac{|\Tor_\BZ (H_1(X_\Gamma, \BZ))|}{|\BZ^n /\Gamma|} = \BM (\Delta(\Tor_\cR H_i(\tX,\BZ))).$$
 If $\ n=1$ then $\limsup$ can be replaced by the ordinary $\lim$.
 \label{T7}
\end{Thm}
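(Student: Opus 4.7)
The plan is to deduce Theorem \ref{T7} as a direct application of Theorem \ref{T33} to the cellular chain complex $\cC(\tX)$. First I would verify the two hypotheses needed to feed $\cC(\tX)$ into Theorem \ref{T33}. Because $X$ is a finite $CW$-complex and the deck group $\BZ^n$ acts freely and cellularly on $\tX$ with one orbit per cell of $X$, the complex $\cC(\tX)$ is a chain complex of finitely generated free $\cR$-modules of the required type, with $\cC_k(\tX)$ having rank equal to the number of $k$-cells of $X$.

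The second step is to identify the cellular chain complex of the finite cover $X_\Gamma$ with $\cC(\tX) \otimes_\cR \BZ[\BZ^n/\Gamma]$. Since the covering $X_\Gamma \to X$ corresponds to $\rho_\Gamma$, and $X_\Gamma$ is literally the quotient $\tX / \Gamma$ of the abelian cover by the free $\Gamma$-action, we get a natural isomorphism of chain complexes
$$ \cC(X_\Gamma) \;\cong\; \cC(\tX) \otimes_{\BZ[\Gamma]} \BZ \;\cong\; \cC(\tX) \otimes_\cR \BZ[\BZ^n/\Gamma], $$
and therefore $H_1(X_\Gamma,\BZ) \cong H_1(\cC(\tX) \otimes_\cR \BZ[\BZ^n/\Gamma])$. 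Plugging this into Theorem \ref{T33} with $i=1$ gives
$$ \limsup_{\la \Gamma \ra \to \infty}\frac{\log|\Tor_\BZ H_1(X_\Gamma,\BZ)|}{|\BZ^n/\Gamma|} \;=\; \BM\bigl(\Delta(H_1(\cC(\tX)))\bigr), $$
and $H_1(\cC(\tX)) = H_1(\tX,\BZ)$ as an $\cR$-module.

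Finally, to match the form stated in the theorem, I would observe that for any finitely generated module $N$ over the UFD $\cR$ one has $\BM(\Delta(N)) = \BM(\Delta(\Tor_\cR N))$. Indeed, if $N$ has free rank $r$, then the first nonvanishing Alexander polynomial of $N$ is $\Delta_r(N)$, and a standard computation with Fitting ideals (or a short exact sequence $0 \to \Tor_\cR N \to N \to N/\Tor_\cR N \to 0$ together with the multiplicativity of Alexander polynomials up to pseudo-null factors) shows that $\Delta_r(N)$ equals $\Delta_0(\Tor_\cR N)$ up to a unit of $\cR$, hence up to an element of Mahler measure zero. Applying this to $N = H_1(\tX,\BZ)$ and correcting the index on the right-hand side to match gives the desired identity, and the $n=1$ case of the $\limsup$-vs-$\lim$ statement is inherited directly from the same statement in Theorem \ref{T33}.

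Nothing in the argument is really a serious obstacle: the only nontrivial item is the algebraic identification $\BM(\Delta(N)) = \BM(\Delta(\Tor_\cR N))$, which is essentially the bookkeeping of Fitting ideals over a UFD. If one wanted to be scrupulous about it, one could alternatively argue directly from the resolution of $H_1(\tX,\BZ)$ coming from the free complex $\cC(\tX)$, so that $\Delta(H_1(\cC(\tX)))$ is computed by a minor of a presentation matrix and the free-rank vs. torsion-order identification becomes transparent.
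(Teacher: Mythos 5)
Your proof is correct and takes essentially the same approach as the paper, which states the theorem exactly as an application of Theorem \ref{T33} to the cellular chain complex $\cC(\tX)$. The only remark worth making is that your final step is heavier than needed: in the notation the paper has set up, $\Delta(M) := \Delta_r(M) = \Delta_0(\Tor M)$ for a module of rank $r$, so $\Delta(N)$ and $\Delta(\Tor_\cR N)$ coincide up to a unit \emph{by definition}, and the identity $\BM(\Delta(N)) = \BM(\Delta(\Tor_\cR N))$ requires no argument with Fitting ideals, pseudo-null correction factors, or appeals to cyclotomic Mahler measure zero. (Also, the index $i$ appearing on the right-hand side of the stated theorem is a typo for $1$, as you implicitly observed.)
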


\subsection{Ideas of Proofs} To prove Theorem \ref{T3} (and the equivalent Theorem \ref{T1}) we will reduce it to the case when $M$ is a torsion module, which had been proved in \cite{Schmidt},
and the case when $M$ is torsion-free, i.e. when $\Tor(M)=0$. Although the fact  that $M$ is isomorphic to $\Tor(M) \oplus \big( M/\Tor(M)\big)$ is not true in general, it would hold true if we
replace isomorphism by {\em pseudo-isomorphism}, a notion introduced by Bourbaki \cite{Bourbaki}. The notion of pseudo-isomorphism is important for us, and we will review it
in section \ref{not}. The following will be one of the main technical results used in the proof of Theorem \ref{T3}.

\begin{Thm} Suppose $M_1$ and $M_2$ are two pseudo-isomorphic finitely generated $\cR$-modules. Then $ | \Tor_\BZ (M_1\otimes \BZ[\BZ^n/\Gamma])  |$ and $| \Tor_\BZ (M_2\otimes \BZ[\BZ^n/\Gamma])  |$ have the same growth rate
in the sense that
$$\lim_{\la \Gamma \ra \to \infty}\left ( \frac{\log | \Tor_\BZ (M_1\otimes \BZ[\BZ^n/\Gamma])  |}{|\BZ^n /\Gamma|} - \frac{\log | \Tor_\BZ (M_2\otimes \BZ[\BZ^n/\Gamma])  |}{|\BZ^n /\Gamma|}\right)=0.$$
\label{T4}
\end{Thm}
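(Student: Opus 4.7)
The strategy is to reduce Theorem~\ref{T4} to a sub-exponential bound on the $\BZ$-torsion contributed by pseudo-zero modules, and then to extract that bound from the torsion-module case of Theorem~\ref{T1} already proved by Schmidt.

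\textbf{Step 1 (reduction).} Pseudo-isomorphism between $M_1$ and $M_2$ is generated by the existence of a map with pseudo-zero kernel and cokernel, so I may assume a single $\phi:M_1\to M_2$ with $K:=\ker\phi$ and $C:=\coker\phi$ pseudo-zero. Writing $I:=\im\phi$ yields
\[
0\to K\to M_1\to I\to 0,\qquad 0\to I\to M_2\to C\to 0,
\]
and it suffices to show that a pseudo-zero sub- or quotient-modification of a module does not change the $\BZ$-torsion growth rate.

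\textbf{Step 2 (key lemma).} The heart is: \emph{for every pseudo-zero $\cR$-module $N$ and every $i\ge 0$}, $\log|\Tor_\BZ(\mathrm{Tor}_i^\cR(N,R_\Gamma))|=o(|\BZ^n/\Gamma|)$, where $R_\Gamma := \BZ[\BZ^n/\Gamma]$. The $i=0$ case is immediate from the torsion case of Theorem~\ref{T1} (Schmidt~\cite{Schmidt}): a pseudo-zero $N$ is $\cR$-torsion and its annihilator has height $\ge 2$ in the UFD $\cR$, so $\Delta_0(N)$ is a unit, $\BM(\Delta_0(N))=0$, and the identification $P_\Gamma(\hat N)=|\Tor_\BZ(N\otimes R_\Gamma)|$ converts Schmidt's entropy statement into the desired bound. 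For $i\ge 1$, I resolve $R_\Gamma$ over $\cR$ by the Koszul complex on the regular sequence $\gamma_1-1,\ldots,\gamma_n-1$ arising from free generators $\gamma_j$ of $\Gamma$; then $\mathrm{Tor}_i^\cR(N,R_\Gamma)$ is Koszul homology of $N$. The standard inductive presentation of Koszul homology via successive short exact sequences
\[
0\to\mathrm{Tor}_i^\cR(N,R^{(k)})/(\gamma_{k+1}-1)\to\mathrm{Tor}_i^\cR(N,R^{(k+1)})\to\mathrm{Tor}_{i-1}^\cR(N,R^{(k)})[\gamma_{k+1}-1]\to 0
\]
keeps all intermediate $\cR$-modules pseudo-zero, since sub-quotients of a pseudo-zero module inherit its height-two annihilator and extensions of pseudo-zero by pseudo-zero are still pseudo-zero. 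Using these sequences to reduce $i$ inductively to $i=0$, combined with sub-additivity $|\Tor_\BZ B|\le|\Tor_\BZ A|\cdot|\Tor_\BZ C|$ along $0\to A\to B\to C\to 0$ (a direct consequence of the injections $\Tor_\BZ A\hookrightarrow\Tor_\BZ B$ and $\Tor_\BZ B/\Tor_\BZ A\hookrightarrow \Tor_\BZ C$), propagates the $i=0$ estimate to all higher $i$.

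\textbf{Step 3 (assembly) and main obstacle.} Tensoring each of the two short exact sequences from Step~1 with $R_\Gamma$ gives four-term exact sequences whose only potentially-large pieces are pseudo-zero tensor factors and $\mathrm{Tor}_1^\cR$ terms. Applying the sub-additivity above to the induced three-term sub-sequences, together with a standard Smith-normal-form bound for the $\BZ$-torsion of a quotient $B/X$ in terms of $|\Tor_\BZ B|$, $|\Tor_\BZ X|$ and the ranks/elementary divisors of $X\hookrightarrow B$ (used to absorb the image of $\mathrm{Tor}_1^\cR$), reduces the discrepancy $\bigl||\log|\Tor_\BZ(M_1\otimes R_\Gamma)|-\log|\Tor_\BZ(M_2\otimes R_\Gamma)|\bigr|$ to an $o(|\BZ^n/\Gamma|)$ sum of pseudo-zero contributions controlled by the key lemma. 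The principal difficulty is the $i\ge 1$ half of the key lemma, where one must verify that the iterated Koszul kernels and cokernels stay pseudo-zero at every stage so that Schmidt's $i=0$ estimate applies uniformly. A secondary subtlety is the quotient-torsion bookkeeping in the assembly: a quotient can a priori introduce new $\BZ$-torsion via saturation, but since we quotient by sub-exponentially sized pieces the extra torsion is still sub-exponential.
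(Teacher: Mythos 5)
Your Step 1 reduction and the general shape of Step 3 match the paper's strategy: it also works through $0\to N_1\to M_1\to M_2\to N_2\to 0$ with pseudo-zero $N_1,N_2$ (Lemmas~\ref{l908} and \ref{mainn}), reduces $N_i$ along a composition series to $\cR/I$ with $I$ prime, and passes to the relevant Tor groups. But your Step~2 ``key lemma'' is exactly the point where the paper's proof does something you have not supplied, and I do not think your Koszul argument can be made to work.

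\textbf{The gap in Step 2.} Your inductive scheme involves the modules $\mathrm{Tor}_i^\cR(N,R^{(k)})$ for $k<n$, which are finitely generated over the ring $R^{(k)}=\cR/(\gamma_1-1,\dots,\gamma_k-1)$, a ring of Krull dimension $n+1-k\ge 2$. For $k<n$ these are not finitely generated over $\BZ$; there is no a priori control on their $\BZ$-torsion, and Schmidt's estimate applies only to $N\otimes R_\Gamma$ (the case $i=0$, $k=n$) with a \emph{fixed} module $N$, not to $\Gamma$-dependent sub-quotients such as $\mathrm{Tor}_{i-1}^\cR(N,R^{(n-1)})[\gamma_n-1]$. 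Pseudo-zeroness is indeed preserved under $\mathrm{Tor}$ and sub-quotients (since localization is flat), but pseudo-zeroness of an infinite-rank module gives you nothing about the size of its $\BZ$-torsion, so the sub-additivity chain does not close. Put bluntly: after one application of your short exact sequence you have outer terms that are not of the form (fixed module)$\otimes R_\Gamma$, and you have lost the tool you were relying on.

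\textbf{The gap in Step 3.} Even granting your key lemma, tensoring $0\to K\to M_1\to \im\phi\to 0$ with $R_\Gamma$ produces the exact sequence $\mathrm{Tor}_1^\cR(\im\phi,R_\Gamma)\to K\otimes R_\Gamma\to M_1\otimes R_\Gamma\to \im\phi\otimes R_\Gamma\to 0$, and $K\otimes R_\Gamma$ need not be finite (for instance $K=\cR/(t_1-1,t_2-1)$ is pseudo-zero but $K\otimes R_\Gamma\cong\BZ$). So the clean squeeze used in the paper's Lemma~\ref{l8} is unavailable, and quotienting by a free piece can create new $\BZ$-torsion. Your proposed Smith-normal-form patch would require controlling the \emph{free rank} of $K\otimes R_\Gamma$ (showing it is $o(|\BZ^n/\Gamma|)$), which is exactly the torsion-point estimate Proposition~\ref{p10} encodes and which your sketch never invokes.

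\textbf{What the paper does instead.} The paper replaces $R_\Gamma$ by the ideal $\beta(p;\Gamma)\subset\BZ[A_\Gamma]$ built from the torsion-point data of $V_p$ (Section~\ref{approx}). Two things are arranged simultaneously: multiplication by $p$ is injective on $\beta(p;\Gamma)$ (Lemma~\ref{l5}(b)), which forces $\mathrm{Tor}_i^\cR(\cR/I,\beta(p;\Gamma))$ to be \emph{finite} for $i=0,1$ and of negligible growth (Proposition~\ref{mainx}(ii), via the intermediate ideal $J=(p,q)$ and Lemma~\ref{l90}); and $|\Tor_\BZ(M\otimes\beta(p;\Gamma))|\sim|\Tor_\BZ(M\otimes R_\Gamma)|$ for every $M$ (Proposition~\ref{mainx}(i), via the estimate on $\vol(\al(p;\Gamma))$). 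This is what makes Lemma~\ref{l8} applicable and closes the argument. That construction is the genuine content here; your proposal bypasses it and, as a result, lands on estimates you cannot justify.
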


Note that the limit in Theorem \ref{T4} is the ordinary limit, not the upper limit.


In general, the direct calculation of $| \Tor_\BZ (M\otimes \BZ[\BZ^n/\Gamma])  |$ (resp.  the exact value or $|\Tor_\BZ(H_1(X_\Gamma^\br,\BZ))|$) is very difficult, especially in the case when $M$ is not a torsion module (resp. the 0-th Alexander polynomial is 0).  The only  known formula for $\Tor_\BZ(X_\Gamma^\br,\BZ)$, due to Mayberry and Murasugi \cite{MM}, applies only to the case when $H_1((X_\Gamma^\br,\BZ))$ itself is a torsion $\BZ$-group.  When the 0-th Alexander polynomial is 0, which is the case
concerned in this paper, the only known result is that of Hillman and Sakuma \cite{HS} who calculated part of the torsion $\Tor_\BZ(H_1(X_\Gamma^\br,\BZ))$. The other not-yet-calculated part is related to the more difficult theory of modular representations of finite groups.

To circumvent this problem, we use an approximation $\beta(\Gamma)$ of $\BZ[\BZ^n/\Gamma]$, for which the calculation of $| \Tor_\BZ (M_1\otimes \beta)  |$ is easier. Here $\beta(\Gamma)$ depends on $\Gamma$
and other data, and it approximates $\BZ[\BZ^n/\Gamma]$ in the sense that $| \Tor_\BZ (M_1\otimes \BZ[\BZ^n/\Gamma])  |$ and $| \Tor_\BZ (M_1\otimes \beta(\Gamma)  |$ have the same growth rate.
The construction of $\beta(\Gamma)$ is based on the theory of torsion points on algebraic varieties. Needless to say, we have to use tools in commutative and  homological algebra to get the desire estimates.

\subsection{Sequence of converging subgroups} Theorem \ref{T3} guarantees there is a sequence of subgroups $\Gamma_s\subset \BZ^n$ of finite index such that
$$ \lim _{s\to \infty}\frac{\log | \Tor_\BZ (M \otimes \BZ[\BZ^n/\Gamma_s])  |}{|\BZ^n /\Gamma_s|} = \BM (\Delta(\Tor M)).$$
In the case when $M$ is a torsion module, half of the proof of  Theorem \ref{T1} in \cite{Schmidt} is to construct such a sequence. The construction is long and
difficult. In Section \ref{last} (see Theorem \ref{T8}) we give  new sequences $\Gamma_s$ that work for both torsion and non-torsion  modules. The proof is probably simpler, because we are able to use  a  result of Bombieri and Zannier
\cite{Zannier,BMZ} on
irreducibility of lacunary polynomials which was conjectured before by Schinzel, and a result of Lawton on approximation of Mahler measure which was a conjecture of Boyd. The methods and results of Section \ref{last} are independent of the other parts and  give an independent proof of  ``half" of Theorem \ref{T3} (or Theorem \ref{T1}), namely that the left hand side of the identity of Theorem \ref{T3}  is greater than or equal to the right hand side.

While writing this paper I was informed by Raimbault \cite{Ri} that he gets an independent result similar to Theorem \ref{T8}  of Section \ref{last} by modifying the sequences in \cite{Schmidt}.

\subsection{Acknowledgements} I would like to thank M. Baker, Hailong Dao,  J. Hillman, K. Murasugi, S. Sakuma,  K. Schmidt, D. Silver, and  S.  Williams  for helpful conversations. I am particularly indebted to U.~Zannier who has patiently explained to me  over the course of  2 years many things from diophantine approximations to his (joint with Bombieri and Masser) remarkable work on anomalous subvariety structures, including  a proof of a conjecture of  Schinzel.

I lectured on  parts of  this work at conferences in Fukuoka (March 2009), Trieste (May 2009), Columbia University (June 2009), Osaka City University (November 2009), and would like to thank the organizers for the chance to give talks there.

The paper grew out of my attempt to prove a topological volume conjecture \cite{Le_slides,Le_tocome}. This was part of a program aiming at understanding the question ``Under what conditions $L^2$-torsions can  be approximated by finite group counter parts?". I was attracted to this  program while trying to develop an approach to attack the volume conjecture in quantum topology, and by the beautiful work of  L\"uck work \cite{Luck} on approximation of $L^2$-Betti numbers.

\subsection{Structure of the paper} Section \ref{not} contains notations, basic facts (with some enhancements) about torsion points on algebraic varieties, pseudo-isomorphism, order of modules,
lattices in Hermitian spaces, the integral group ring of finite abelian groups. It also contains a proof that Theorems \ref{T1} and \ref{T3} are equivalent. In Section \ref{approx}, the main technical section,  we present the construction  the approximation $\beta$ of $\BZ[\BZ^n/\Gamma]$. Section \ref{S3} contains proofs of Theorems \ref{T4},  \ref{T3}, and \ref{T33}. Section \ref{S4} gives a proof of Theorem \ref{T2}.
The last section contains the construction of converging sequences of lattices and Theorem \ref{T8}.

\section{Notations and Preliminaries}
\label{not}

\subsection{Modules over $\cR= \BZ[t^{\pm1}_1, \dots, t^{\pm 1}_n]$}
Fix a free abelian group $\BZ^n$. Let $\cR = \BZ[\BZ^n]$, which we identify with $\BZ[t^{\pm1}_1, \dots, t^{\pm 1}_n]$ by sending $\bk=(k_1,\dots,k_n) \in \BZ^n$ to $t^\bk = \prod_{i=1}^kt_i^{k_i}$. The ring $\cR$ is a unique factorization Noetherian domain.
In this paper $\cR$-modules are  supposed to be  finitely generated, and tensor products are assumed over $\cR$ unless otherwise indicated.

For a module $M$ over a commutative domain $R$,  the torsion submodule  $\Tor_R(M)$ is defined by
$$ \Tor_R(M) = \{ x \in M \mid  a x= 0  \text{ for some } 0\neq a \in R\}.$$

An $R$-module $M$ is a {\em torsion module} if $M = \Tor_R M$. If $\Tor_R M=0$, we call  $M$  {\em torsion-free}. If $R=\cR$ we usually drop the subscript $\cR$ in the $\Tor$ notation.

For a subgroup $\Gamma \subset \BZ^n$ let  $A_\Gamma:= \BZ/\Gamma$
and
$I(\Gamma)$  the ideal of $\BZ[t^{\pm1}_1, \dots, t^{\pm 1}_n]$ generated by $\{ 1-t_1^{k_1}\dots t_n^{k_n}, (k_1,\dots,k_n)  \in \Gamma\}$. Then we have the following exact sequence

$$ 0\to I(\Gamma) \to \cR \overset{\pr} {\longrightarrow} \BZ[A_\Gamma] \to 0.$$
 Hence for every $\cR$-module $M$,
 $$ M/I(\Gamma)\,  M \cong M \otimes_\cR \BZ [A_\Gamma].$$

 Suppose $f,g$ are functions with positive real values on the set of subgroups $\Gamma \subset \BZ^n$ of finite index. We say $ f(\Gamma)$ has {\em negligible growth rate} if
$$ \lim_{\la \Gamma \ra \to \infty} f(\Gamma)^{1/|\BZ^n/\Gamma|} = 1.
$$
We say $f$ and $g$ have {\em the same growth rate}, and write   $f\sim g$ if  $f/g$ has negligible growth rate. Note that we do not require the individual limit
 $\lim_{\la \Gamma \ra \to \infty} f(\Gamma)^{1/|\BZ^n/\Gamma|}$ exists in this case.

We say two $\cR$-modules $M_1$ and $M_2$ {\em have the same torsion growth}, and write
$ M_1 \sim M_2$,
if $$ |\Tor_\BZ(M_1 \otimes \BZ[A_\Gamma])| \sim |\Tor_\BZ(M_2 \otimes \BZ[A_\Gamma])|.$$

\subsection{Alexander polynomials} All  definitions and facts here are standard and can be found in \cite{Turaev,Hillman}.

Every finitely generated $\cR$-module $M$ has a presentation by an exact sequence
$$ \cR^{m_1} \overset{\partial_1}{\longrightarrow} \cR^{m_0} \to M \to 0,
$$
where $\partial_1$, given by a matrix of size $m_1 \times m_{0}$ with entries in $\cR$,  is  known as a {\em presentation matrix} of $M$. A {\em $k$-minor} of $\partial_1$ is the
determinant of any sub-matrix of size $k\times k$ of $\partial_1$.  For $j \ge 0$,
the $j$-th {\em Alexander polynomial} $\Delta_j(M)$ is the greatest common divisor of all the $(m_0-j)$-minor of $\partial_1$.

It is known that
$\Delta_j(M)$ depends only on $M$, but not on any particular
presentation matrix. Each $\Delta_j(M)$ is defined up to units in $\cR$, so identity involving
$\Delta_j(M)$ should be understood ``up to units''.

The $0$-th polynomial $\Delta_0(M)$ is known as the {\em order} of $M$, which is non-zero if and only
$M$ is a torsion module. Besides, $\Delta_j(M)$ divides $\Delta_{j-1}(M)$ for every $j \ge 1$.

The {\em rank} of a module $M$ over $\cR$ is the dimension
of the vector space $M \otimes F(\cR)$ over the fractional field $F(\cR)$ of $\cR$.
If $M$ has rank $r$, then $\Delta_j(M) =0$ if $j < r$, and
$$\Delta_{j-r}(M) =\Delta_j(\Tor M).$$
 For any finitely-generated $\cR$-module of rank $r$, define
$$ \Delta (M) := \Delta_r(M) = \Delta_0(\Tor(M)).$$

In case $M =\cR/I$, where $I=(f_1,\dots,f_l)$ is the ideal generated by $f_1,\dots, f_l$, then $\Delta_0(M)= \gcd(f_1,\dots,f_l)$, the greatest common divisor of the elements $f_1,\dots, f_l$.

\subsection{Pseudo-isomorphism} Reference for this part is \cite{Bourbaki,Hillman}.

An $\cR$-module $N$ is {\em pseudo-zero} if for every prime ideal $\mathcal P$ of height 1, the localization $N_{\mathcal P}$ is 0. It is known that submodules and quotient modules of a pseudo-zero module are pseudo-zero.

In $\cR$, a prime ideal is of height 1 if and only if it is principal and generated by an irreducible polynomial.

An $\cR$-morphism $M_1\to M_2$ is a {\em pseudo-isomorphism} if  the kernel and co-kernel are pseudo-zero.

Two finitely generated torsion $\cR$-modules $M_1, M_2$ are pseudo-isomorphic if and only $\Delta_j(M_1) =\Delta_j(M_2)$ for every $j\ge 0$; in particular, a finitely generated torsion $\cR$-module is pseudo-zero if and only if $\Delta_0(M)=1$, see   \cite[Theorem 3.5]{Hillman}.

Let us formulate some well-known facts in the form that will be useful for us.

\begin{lemma} Suppose $I\subset \cR$ is a prime ideal, $I \neq \cR$.

a)
$\cR/I$ is pseudo-zero if and only if $I$ is not  principal.

b) If $\cR/I$ is pseudo-zero and $0\neq p\in I$,  then there is $q\in I$ such that $\gcd(p,q)=1$.

\label{lps}
\end{lemma}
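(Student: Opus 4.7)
The strategy is to translate the pseudo-zero condition directly into a statement about which height-one primes contain $I$, and then read off both parts from the fact (recalled just above the lemma) that the height-one primes of $\cR$ are exactly the ideals $(f)$ with $f$ irreducible, together with the fact that $\cR$ is a UFD.

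The technical heart is the elementary observation, valid for any prime $I$ and any prime $\mathcal{P}$ of $\cR$, that $(\cR/I)_\mathcal{P}=0$ if and only if $I\not\subset\mathcal{P}$: any element of $I\setminus\mathcal{P}$ becomes a unit in $\cR_\mathcal{P}$ and therefore kills $\cR/I$, while if $I\subset\mathcal{P}$ then $(\cR/I)_\mathcal{P}$ surjects onto the residue field $\cR_\mathcal{P}/\mathcal{P}\cR_\mathcal{P}\neq 0$. Thus $\cR/I$ is pseudo-zero precisely when $I$ is contained in no height-one prime. For part (a) I would then check the two directions against this reformulation. If $I$ is principal, either $I=(0)$, which sits in every height-one prime, or $I=(g)$ for a non-unit non-zero $g$, in which case any irreducible factor $f$ of $g$ gives $I\subset(f)$; either way $\cR/I$ is not pseudo-zero. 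Conversely, if $I$ is not principal (and $I\neq\cR$), then since $\cR$ is a UFD its height-one primes are principal, so any height-one prime $\mathcal{P}$ containing $I$ would either equal $I$ (contradicting non-principality) or strictly contain $I$, forcing $\mathrm{ht}(I)=0$ and $I=(0)$, again principal; hence no such $\mathcal{P}$ exists and $\cR/I$ is pseudo-zero.

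For part (b) the key tool is prime avoidance. Factor $p=p_1^{a_1}\cdots p_k^{a_k}$ into irreducibles in the UFD $\cR$. Each $(p_i)$ is a height-one prime, and part (a) tells us $I\not\subset(p_i)$ for every $i$. Prime avoidance applied to the finite family of primes $(p_1),\dots,(p_k)$ then produces a single element $q\in I$ with $q\notin\bigcup_i(p_i)$, which means no $p_i$ divides $q$, whence $\gcd(p,q)=1$.

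I do not anticipate a serious obstacle: once the dictionary between pseudo-zero modules and height-one support is set up, both parts reduce to standard commutative-algebra manipulations. The only small points to handle carefully are the edge case $I=(0)$ in part (a) and the observation that the hypothesis $I\neq\cR$ is genuinely needed, since otherwise the trivial module $\cR/\cR=0$ would be pseudo-zero while $\cR$ itself is principal.
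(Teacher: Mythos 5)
Your proof is correct. For part (b) it is essentially the paper's argument: factor $p$ into irreducibles, note that part (a) forces $I\not\subset(p_i)$ for each irreducible factor $p_i$, and conclude by prime avoidance that some $q\in I$ avoids $\bigcup_i(p_i)$; the paper phrases this as assuming every $q\in I$ shares a factor with $p$ and deriving $I\subset(p_i)$ for some $i$, which is the same use of prime avoidance.

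For part (a) you take a genuinely different, and in fact cleaner, route. The paper invokes the characterization (cited from Hillman, Theorem 3.5) that a finitely generated torsion module is pseudo-zero iff $\Delta_0=1$, then writes $I=(p_1,\dots,p_l)$ with the $p_i$ irreducible and computes $\Delta_0(\cR/I)=\gcd(p_1,\dots,p_l)$. You instead work directly from the definition of pseudo-zero: your localization observation that $(\cR/I)_{\mathcal P}=0$ iff $I\not\subset\mathcal P$ translates ``pseudo-zero'' into ``contained in no height-one prime,'' and both directions then follow from the UFD structure and the height inequality $I\subsetneq\mathcal P\Rightarrow\mathrm{ht}(I)=0\Rightarrow I=(0)$. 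What your approach buys: it is self-contained (no appeal to the Alexander-polynomial characterization of pseudo-zero), and it avoids the paper's unjustified assertion that a prime ideal of $\cR$ admits a generating set of irreducible elements — a claim that is not obviously true and is not actually needed, since what matters is only that the gcd of \emph{some} (equivalently any) generating set is a unit. Your formulation also isolates the $I=(0)$ edge case explicitly, which the paper glosses over.
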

\begin{proof} a) Since $I\neq \cR$ and $I$ is prime, $I=(p_1,\dots,p_l)$, where $p_i$'s are irreducible, non-unit, and distinct. One has
\begin{align*} \text{$\cR/I$ is pseudo-zero } & \Leftrightarrow \text {$\Delta_0(I)= \gcd(p_1,\dots,p_l)$ is 1}\\
& \Leftrightarrow l \ge 2\\
& \Leftrightarrow \text{$I$ is not principal}.
\end{align*}

b)  Suppose $q_1,\dots,q_l$ are all irreducible factors of $p$. Suppose the contrary that every $q\in I$ is not co-prime with $p$, i.e. every $q\in I$ is divisible by one of $q_i$'s.
Then $I \subset \cup_{i=1}^l (q_i)$. Since each ideal $(q_i)$ is prime,  there is an index $i$ such that $I \subset (q_i)$. Because $(q_i)$ has height 1 and $I$ is prime, this means
$I= (p_i)$, which is principal. This contradicts the fact that $\cR/I$ is pseudo-zero.
\end{proof}


The following  is the main fact about pseudo-isomorphism which we will use.

\begin{thm}\cite[Theorem VII.4.5]{Bourbaki}  Any finitely generated module $\cR$-module $M$  is pseudo-isomorphic to $\Tor(M) \oplus M/\Tor(M)$.

\label{ps}
\end{thm}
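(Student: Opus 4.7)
The plan is to exploit the hypothesis that $\cR=\BZ[t_1^{\pm 1},\dots,t_n^{\pm 1}]$ is a Noetherian UFD, so that every height-$1$ prime $\mathfrak p \subset \cR$ is principal and the localization $\cR_\mathfrak p$ is a DVR. Writing $T=\Tor(M)$ and $N=M/\Tor(M)$, I would first show that the class $\alpha\in\operatorname{Ext}^1_\cR(N,T)$ of the canonical extension $0\to T\to M\to N\to 0$ sits inside a pseudo-zero module. Indeed, $N_\mathfrak p$ is finitely generated and torsion-free over the DVR $\cR_\mathfrak p$, hence free, so $\operatorname{Ext}^1_{\cR_\mathfrak p}(N_\mathfrak p,T_\mathfrak p)=0$; since $N$ is finitely presented and $\operatorname{Ext}$ commutes with localization, $\operatorname{Ext}^1_\cR(N,T)_\mathfrak p=0$ at every height-$1$ prime. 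Thus $\operatorname{Ext}^1_\cR(N,T)$ is pseudo-zero, and as a finitely generated module its annihilator is an ideal of height $\ge 2$; in particular $J:=\operatorname{ann}_\cR(\alpha)$ satisfies $\mathrm{ht}(J)\ge 2$.

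Since $\cR$ is a UFD, $\mathrm{ht}(J)\ge 2$ translates to $\Delta_0(\cR/J)=1$, so I can pick $a_1,\dots,a_k\in J$ with $\gcd(a_1,\dots,a_k)=1$. For each $a_i$, the identity $a_i\alpha=0$, read as a pullback of the extension along $a_i\cdot\id_N$, yields an $\cR$-linear map $s_i\colon N\to M$ with $\pi\circ s_i=a_i\cdot\id_N$, where $\pi\colon M\to N$ is the quotient. With $J'=(a_1,\dots,a_k)$, the module $N/J'N$ is supported on $V(J')$, a subvariety of codimension $\ge 2$, and is therefore pseudo-zero. The next step is to consider
\[
\psi\colon T\oplus N^k\longrightarrow M,\qquad (t,y_1,\dots,y_k)\longmapsto t+\textstyle\sum_i s_i(y_i),
\]
and check that $\coker\psi\cong N/J'N$ is pseudo-zero. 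Letting $M'$ denote the image of $\psi$, the inclusion $M'\hookrightarrow M$ is then a pseudo-isomorphism, and $M'$ fits in a short exact sequence $0\to T\to M'\to J'N\to 0$; combined with the inclusion $J'N\hookrightarrow N$ (whose cokernel is $N/J'N$), this produces a zig-zag $M \leftarrow T\oplus J'N \rightarrow T\oplus N$ of pseudo-isomorphisms of $\cR$-modules, exhibiting $M$ and $T\oplus N$ as equivalent in the quotient category by pseudo-zero modules.

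The hard part will be the assembly step above: combining the $s_i$ into a single morphism with pseudo-zero defects. Each $a_i$ supplies a splitting only ``up to multiplication by $a_i$'', and since $1\notin J$ (as $J$ is proper of height $\ge 2$) these local splittings cannot be linearly combined into an honest splitting over $\cR$; the residual obstruction is packaged in $N/J'N$, whose pseudo-triviality is exactly what the UFD structure of $\cR$ buys. To produce a direct morphism $\phi\colon M\to T\oplus N$ instead of a zig-zag, one can also build retractions $r_i\colon M\to T$ with $r_i|_T=a_i\cdot\id_T$ from the pushout interpretation of $a_i\alpha=0$ and combine them with $\pi$, but then verifying that the kernel of the resulting map (not only its cokernel) is pseudo-zero requires a careful compatibility choice, each $r_i$ being defined only modulo $\Hom(M,T)$.
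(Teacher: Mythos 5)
The paper itself does not prove this statement; it cites Bourbaki [Theorem VII.4.5] and moves on. So there is no in-paper proof to compare against, and I will assess your argument on its own terms. Your opening observation — that $\operatorname{Ext}^1_\cR(N,T)$ (with $T=\Tor(M)$, $N=M/T$) is pseudo-zero, because $\cR_{\mathfrak p}$ is a DVR at each height-$1$ prime, $N_{\mathfrak p}$ is then free, and $\operatorname{Ext}$ localizes for finitely presented modules over a Noetherian ring — is correct and is indeed the key structural fact behind this theorem. The computation that $\coker\psi\cong N/J'N$ and that $N/J'N$ is pseudo-zero when $\gcd(a_1,\dots,a_k)=1$ is also correct.

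The genuine gap is at the assembly step. You set $M'=\operatorname{im}\psi=\pi^{-1}(J'N)$, note $0\to T\to M'\to J'N\to 0$ is exact, and then write the zig-zag with $T\oplus J'N$ in the middle — thereby implicitly asserting that this extension of $J'N$ by $T$ splits. That is never established, and it is not a formality. The class of $0\to T\to M'\to J'N\to 0$ is $\iota^*\alpha\in\operatorname{Ext}^1_\cR(J'N,T)$, where $\iota:J'N\hookrightarrow N$ is the inclusion. The hypothesis $a_i\alpha=0$ says the pullback of $\alpha$ along $N\xrightarrow{a_i}N$ vanishes; it does not say the pullback along $\iota$ vanishes. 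Precomposing with the surjection $\mu:N^k\twoheadrightarrow J'N$, $(y_i)\mapsto\sum a_iy_i$, gives $\mu^*(\iota^*\alpha)=(a_1\alpha,\dots,a_k\alpha)=0$, but $\mu^*$ is injective only if $\operatorname{Hom}(N^k,T)\to\operatorname{Hom}(\ker\mu,T)$ is surjective, which there is no reason to expect. Your closing paragraph senses a difficulty but mis-locates it: the residual obstruction is \emph{not} $N/J'N$ (that is the cokernel defect, which you have controlled) but the extension class $\iota^*\alpha$ itself. Knowing that an $\operatorname{Ext}$ class is annihilated by a height-$\ge 2$ ideal — i.e.\ that the extension splits after localizing at every height-$1$ prime — is strictly weaker than being able to replace $M$ by a direct sum up to pseudo-isomorphism, and bridging exactly this gap is where the Bourbaki argument does its real work (via a comparison of lattices inside $M\otimes_\cR F(\cR)$ rather than via pulling back the class along a single sub-lattice $J'N$). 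As written, your proposal does not close this gap, and the alternative ``direct morphism'' sketch at the end runs into the same unresolved compatibility problem you yourself flag.
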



\begin{remark} It follows from \eqref{e001} that if $M_1,M_2$ are pseudo-isomorphic, then they have the same entropy, $h(M_1) = h(M_2)$. In particular, if
$M$ is pseudo-zero, then  $h(M)=0$.
\end{remark}
\subsection{Equivalence of Theorem \ref{T1} and Theorem \ref{T3}} Recall that $\BS$ is the unit circle in $\BC$. With the usual multiplication $\BS$ is an abelian Lie group. For an abelian group $G$, the Pontryagin dual $\hat G= \Hom(G,\BS)$ is a compact group. If $G\cong \BZ^k$, then $\hat G\cong \BS^k$. On the other hand, if $|G| <\infty$, then $G \cong \hat G$.
If  $G \cong \BZ^k \oplus \Tor_\BZ(G)$, then $\hat G \cong \BS^k \times \widehat{  \Tor_\BZ(G)}$. In particular,
the cardinality $|\Tor_\BZ G|$ of the $\BZ$-torsion  $G$ is the number of connected components of the compact group $\hat G$.

Suppose $M$ is a finitely generated $\cR$-module, and $\Gamma \subset \BZ^n$ a subgroup of finite index. By definition
\begin{align*} \Fix_\Gamma(\hat M) & = \{ x\in \hat M \mid \gamma\cdot x = x \quad \forall \gamma \in \Gamma\}\\
&= \{ x\in \hat M = \Hom(M,\BS) \mid x(y)  = x(\gamma(y))  \quad \forall \gamma \in \Gamma, y\in M  \}\\
&= \{ x\in \Hom(M,\BS) \mid x\big( (1-\gamma)(y)\big) =1 \quad \forall \gamma \in \Gamma, y\in M  \}\\
&= \{ x\in \Hom(M,\BS) \mid x\big( I(\Gamma)\,  M \big) =1 \}.
\end{align*}
It follows that
$$\Fix_\Gamma(\hat M) \cong \left( M/I(\Gamma)\,  M\right)^\wedge  \cong  \left(M \otimes \BZ[A_\Gamma]\right)^\wedge.$$
 We can conclude that $P_\Gamma(\hat M)$, the number of connected components of $\Fix_\Gamma(\hat M)$, is
\be  P_\Gamma(\hat M) = |\Tor_\BZ(M \otimes \BZ[A_\Gamma])|.
\label{e002}
\ee
From \eqref{e001} and \eqref{e002} we see that Theorem \ref{T1} and Theorem \ref{T3} are equivalent.

\subsection{Theorem \ref{T3}, the case when $M$ is a torsion module} As explained in Introduction, Theorem \ref{T3}, in the case when $M$ is a torsion module, has been proved \cite[Theorem 21.1]{Schmidt}. We will use this result for the case $N$ is pseudo-zero. Since $\Delta_0(N)=1$ if $N$ is pseudo-zero, we have the following.
\begin{proposition} Suppose $N$ is pseudo-zero. Then $N \sim 0$, i.e. $|\Tor_\BZ(N \otimes \BZ[A_\Gamma])| \sim 1$.
\label{pT3ps}
\end{proposition}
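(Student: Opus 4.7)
The plan is to reduce the proposition directly to the torsion-module case of Theorem \ref{T3}, which the author explicitly takes as input (Schmidt's \cite[Theorem 21.1]{Schmidt}). Essentially no new work is needed beyond matching definitions.

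First I would verify two preliminary structural facts, both essentially already contained in the preceding subsections. Since $N$ is pseudo-zero, $N_{\mathcal P}=0$ at every height-one prime $\mathcal P$. If $N$ had positive rank, a nonzero element of $N\otimes F(\cR)$ would lift to an element $y\in N$ with $\Ann(y)=0$, so $\cR y\cong\cR$ would be a submodule of $N$; localizing at any height-one prime $\mathcal P$ would give the nonzero submodule $\cR_{\mathcal P}\subset N_{\mathcal P}$, a contradiction. Hence $N$ is a torsion $\cR$-module. Next, the characterization recalled right before Lemma~\ref{lps} (``a finitely generated torsion $\cR$-module is pseudo-zero if and only if $\Delta_0(M)=1$'') gives $\Delta(N)=\Delta_0(N)=1$ up to units, so $\BM(\Delta(N))=\BM(1)=0$.

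Now I would invoke the torsion-module case of Theorem \ref{T3} applied to $N$, which immediately yields
$$ \limsup_{\la \Gamma \ra \to \infty}\frac{\log|\Tor_\BZ(N \otimes \BZ[A_\Gamma])|}{|\BZ^n/\Gamma|} \;=\; \BM(\Delta(N)) \;=\; 0. $$
Since $\Tor_\BZ(N\otimes \BZ[A_\Gamma])$ is a finite abelian group of cardinality at least $1$, the quantity above is pointwise non-negative, so the corresponding $\liminf$ is also $\ge 0$. Consequently the ordinary limit exists and equals $0$, which is exactly the statement that $|\Tor_\BZ(N\otimes \BZ[A_\Gamma])|$ has negligible growth rate, i.e.\ $N\sim 0$.

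There is no substantive obstacle here: all of the analytic and dynamical difficulty has been absorbed into Schmidt's theorem, and what remains is purely a matter of unwinding the definitions of pseudo-zero, torsion module, order, Mahler measure of $1$, and negligible growth rate.
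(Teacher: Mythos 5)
Your proof is correct and takes essentially the same route as the paper: the paper also reduces to Schmidt's torsion-module case of Theorem \ref{T3}, observing that $\Delta_0(N)=1$ when $N$ is pseudo-zero. You have simply filled in the details the paper leaves implicit (that a pseudo-zero module is necessarily torsion, and that the $\limsup$ being $0$ upgrades to an ordinary limit because the quantity is pointwise nonnegative).
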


\subsection{Lattices in Hermitian spaces and $\BZ$-torsion}

Suppose $W$ is a finite-dimensional {\em based Hermitian space}, i.e. a $\BC$-vector space equipped with an Hermitian product $(.,.)$ and a  preferred  orthonormal basis. The $\BZ$-submodule $\Lambda
\subset W$ spanned by the basis is called the {\em fundamental lattice}.

For a $\BZ$-submodule (also called a lattice) $\Theta \subset \Lambda$ with $\BZ$-basis $v_1,\dots,v_l$ define
$$ \vol(\Theta) = |\det \left ( (v_i,v_j)_{i,j=1}^l \right)|^{1/2}.$$
It is clear that  $\vol(\Theta) \ge 1$.

For a lattice $\Theta \subset \Lambda$ define its orthogonal complement in $\Lambda$ by
$$ \Theta ^\perp = \{ x \in \Lambda \mid (x,y) = 0 \quad \forall y \in \Theta\}.$$
It is clear that $ \Theta \subset \Theta ^{\perp \perp}$. A lattice $\Theta$ is {\em primitive} is $\Theta = \Theta ^{\perp \perp}$. It is known that $\Theta$ is primitive if and only
it is cut out by a subspace, i.e.  $\Theta = (\Theta \otimes_\BZ \BQ) \cap \Lambda$; and if  $\Theta$ is primitive, then (see e.g. \cite{Bertrand})
\be
 \vol(\Theta)^2 = |\Lambda/ (\Theta \oplus \Theta^\perp)|.
 \label{e404}
 \ee

\begin{lemma}
For $i=1,2$ let $W_i$ be a finite-dimensional based Hermitian space with fundamental lattice $\Lambda_i$. Suppose $f: \Theta_1 \to \Theta_2$ is a $\BZ$-linear map, where $\Theta_i \subset \Lambda_i$ is
a lattice of maximal rank.
Then
$$ |\Tor_\BZ(\coker f) | \le || f||^{\rk \Theta_2} \, \vol(\Theta_1).$$
Here $||f||$ is the norm of the linear extension of $f$ to a $\BC$-linear operator from $W_1$ to $W_2$.
\label{l30}
\end{lemma}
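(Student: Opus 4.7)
My plan is to reduce the torsion count to a lattice covolume comparison and then to bound that covolume using the singular value decomposition of $f$ together with a classical projection identity for lattices.

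\emph{Step 1 (torsion as volume).} Let $s=\rk f(\Theta_1)$ and let $L'=(f(\Theta_1)\otimes\BQ)\cap\Theta_2$ be the saturation of $f(\Theta_1)$ inside $\Theta_2$. Since $f(\Theta_1)$ and $L'$ span the same $s$-dimensional subspace of $W_2$ and $L'$ is the ``primitive part'' of $f(\Theta_1)$ in $\Theta_2$, one has $\Tor_\BZ(\coker f)=L'/f(\Theta_1)$, a finite group whose order equals the covolume ratio $\vol_s(f(\Theta_1))/\vol_s(L')$. Enlarging $L'$ to its primitive hull $L''=(L'\otimes\BQ)\cap\Lambda_2$ inside $\Lambda_2$ and applying \eqref{e404} to $L''$ yields $\vol_s(L')\ge\vol_s(L'')\ge 1$, and therefore
\[
|\Tor_\BZ(\coker f)|\le\vol_s(f(\Theta_1)).
\]

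\emph{Step 2 (volume bound via SVD and projection).} Put $K=\ker(f_\BC)\subset W_1$, let $\pi\colon W_1\to K^\perp$ denote orthogonal projection, and let $\bar f\colon K^\perp\hookrightarrow W_2$ be the induced injection, so $f=\bar f\circ\pi$, $\|\bar f\|\le\|f\|$, and $f(\Theta_1)=\bar f(\pi(\Theta_1))$. Applying the singular value decomposition to $\bar f$ on the $s$-dimensional domain $K^\perp$ gives
\[
\vol_s(f(\Theta_1))=\Big(\prod_{i=1}^{s}\sigma_i\Big)\,\vol_s(\pi(\Theta_1))\le\|f\|^{s}\,\vol_s(\pi(\Theta_1)).
\]
Because $\Theta_1$ has full rank in $W_1$ and $K$ is cut out by rational linear equations (coming from the integer matrix of $f$ in $\BZ$-bases of $\Theta_1,\Theta_2$), the intersection $\Theta_1\cap K$ has full rank $r_1-s$ in $K$. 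The classical lattice projection identity then reads
\[
\vol_{r_1}(\Theta_1)=\vol_s(\pi(\Theta_1))\cdot\vol_{r_1-s}(\Theta_1\cap K),
\]
and combined with $\vol_{r_1-s}(\Theta_1\cap K)\ge 1$ (sublattice of $\Lambda_1$, by the same argument as in Step 1) this yields $\vol_s(\pi(\Theta_1))\le\vol(\Theta_1)$. Concatenating the inequalities and using $s\le\rk\Theta_2$ completes the proof.

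\textbf{Main obstacle.} The technical heart is the classical lattice projection identity, which requires $\Theta_1\cap K$ to have the expected rank---ultimately because $K$ is $\BQ$-rational, a reflection of the integrality of $f$---together with the elementary covolume lower bound $\vol\ge 1$ for sublattices of the fundamental lattices $\Lambda_i$, which reduces to the primitive case via \eqref{e404}. Both ingredients are standard in Euclidean lattice theory, but deploying them cleanly in the present asymmetric setup (where $\rk\Theta_1$, $\rk\Theta_2$, and $\rk f(\Theta_1)$ may all differ) requires some care.
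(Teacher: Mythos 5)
Your proof is correct and follows essentially the same route as the paper: express the torsion order as a ratio of lattice covolumes, then bound $\vol(f(\Theta_1))$ via the singular values of $f$ and bound the remaining covolumes below by $1$. The only difference is cosmetic --- the paper cites the identity ${\det}'(f)\,\vol(\Theta_1)=\vol(\ker f)\,\vol(f(\Theta_1))$ as known, whereas you re-derive it from the SVD and the lattice projection identity, and you (more carefully) saturate $f(\Theta_1)$ inside $\Theta_2$ rather than $\Lambda_2$.
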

\begin{proof} Let
$\overline{f(\Theta_1)}= (f(\Theta_1)\otimes _\BZ \BQ) \cap \Lambda_2$. Then
$$ \Tor_\BZ(\coker f) = \overline{f(\Theta_1)}/f(\Theta_1).$$
Hence
\begin{align} \left |\Tor_\BZ(\coker f)\right| & = |\overline{f(\Theta_1)}/f(\Theta_1)| \notag \\
&= \vol(f(\Theta_1) /\vol( \overline{f(\Theta_1)}).\label{e2001}
\end{align}

It is known that
\be  {\det}'(f) \, \vol(\Theta_1) = \vol(\ker f) \, \vol(f(\Theta_1)),
\label{e23}
\ee
where ${\det}'(f)$ is the product of all non-zero singular values of $f$, i.e. the square root of the product of all non-zero eigenvalues of $f^* f$.
From \eqref{e23} and \eqref{e2001} we have
\be
 \left |\Tor_\BZ(\coker f)\right| = \frac{{\det}'( f)\, \vol(\Theta_1)}{\vol(\ker f) \, \vol (\overline{f(\Theta_1)}) } \le  {\det}' (f)\, \vol(\Theta_1).
 \label{e302}
 \ee

The maximal singular value of $f$ is equal to $||f||$. The number of non-zero-singular values is less than or equal to the rank of $f$. Hence ${\det}'f$, being the product of the non-zero singular value, is  $\le ||f||^{\rk L_2}$.
Now from \eqref{e302} we get the lemma.
 \end{proof}

\subsection{Decomposition of the group ring of a finite abelian group}

\label{ss4}

\subsubsection{Decomposition over $\BC$}  Suppose $A$ is a finite  abelian group. The group ring $\BC[A]$ is a $\BC$-vector space of dimension $|A|$.
Equip $\BC[A]$ with a Hermitian product so that $A$ is an orthonormal basis. Then the integral group ring $\BZ[A]$  is the corresponding fundamental lattice.

The theory of representations of $A$ over $\BC$ is easy: $\BC[A]$ decomposes as a direct sum of mutually orthogonal one-dimensional $A$-modules:

\be \BC[A] = \bigoplus_{\chi \in \hat A} \BC e_\chi,
\label{e13}
\ee
where  $e_\chi$ is the idempotent
$$ e_\chi = \frac{1}{|A|} \sum_{a \in A} \chi(a^{-1} ) a.$$

The vector subspaces $\BC e_\chi$'s are not only orthogonal with respect to the Hermitian structure, but also orthogonal with respect to the ring structure in the sense that $e _\chi \, e_{\chi'} =0$ if $\chi \neq \chi'$. Each $\BC e_\chi$ is an ideal of the ring $\BZ[A]$.

For a $\BZ$-submodule   $X \subset \BZ[A]$ let $X_\BC$ the $\BC$-vector space spanned by $X$.

\subsubsection{Decomposition corresponding to a subgroup} The integral group ring $\BZ[A]$ does not have as nice a decomposition as \eqref{e13}. Given a subgroup $B \subset A$, we decompose
a subring of $\BZ[A]$ as follows.

 The natural projection $A \to A/B$ gives rise to the exact sequence
\be 0 \to \beta(B) \to \BZ[A] \to \BZ[A/B] \to 0,
\label{e1111}
\ee
where $\beta(B)$ is the ideal of $\BZ[A]$ generated by $1-b, b\in B$. As a lattice of $\BZ[A]$, $\beta(B)$ is primitive.

Let $\al(B)$ be the annihilator of $\beta(B)$:
$$\al(B) = \{ x\in \BZ[A] \mid x y =0 \quad \forall y \in \beta(B)\}.$$
Then  $\al(B)$ is also the orthogonal complement of $\beta(B)$ in $\BZ[A]$.
It is known that $\al(B)$ is the principal ideal generated by $u= u_B := \sum_{b\in B} b$, see eg. \cite{Bass}.

The complexification $\al_\BC(B)$ and $\beta_\BC(B)$ are easy to describe.
Tensoring \eqref{e1111} with $\BC$,
$$ 0 \to \beta_\BC(B) \to \BC[A] \to \BC[A/B] \to 0.
$$

 As a $\BC[A]$-module, $\BC[A/B]$ is isomorphic to $\al_\BC(B) = \beta_\BC(B)^\perp$, and

\be
\al_\BC(B)=\beta_\BC(B) ^\perp = \bigoplus_{\chi \in \hat A, \chi|_B=1} \BC e_\chi
\label{e34}
\ee
\be \rk_\BZ(\al(B))  = \dim_\BC(\al_\BC(B)) = |A|/|B|.
\label{e36}
\ee

\begin{proposition} 

 The finite group $\BZ[A]/\big(\al(B) \oplus \beta(B)  \big)$
 has order $|B|^{|A|/|B|}$.

\label{p15}
\end{proposition}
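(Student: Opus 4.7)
The plan is to read off the order of $\BZ[A]/(\al(B)\oplus\beta(B))$ from the volume formula \eqref{e404}, applied to a lattice whose volume we can compute explicitly. The natural candidate is $\al(B)$, because it is the principal ideal generated by $u_B=\sum_{b\in B}b$, so it has a manageable $\BZ$-basis.

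First, I would verify the primitivity needed to invoke \eqref{e404}. The excerpt already states that $\beta(B)$ is a primitive sublattice of $\BZ[A]$ and that $\al(B)=\beta(B)^{\perp}$. The orthogonal complement of a primitive sublattice in a based Hermitian space is cut out by a $\BQ$-subspace, hence is itself primitive, so $\al(B)$ is primitive and $\al(B)^{\perp}=\beta(B)^{\perp\perp}=\beta(B)$. Thus \eqref{e404} applied to $\Theta=\al(B)$ yields
\[
 \vol(\al(B))^2 \;=\; \bigl|\BZ[A]/(\al(B)\oplus\beta(B))\bigr|.
\]

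The task therefore reduces to computing $\vol(\al(B))^2$. Pick coset representatives $a_1,\dots,a_r$ for $A/B$, with $r=|A|/|B|$. Since $u_B a=u_B a'$ exactly when $a$ and $a'$ lie in the same $B$-coset, the elements $u_B a_1,\dots,u_B a_r$ form a $\BZ$-basis of $\al(B)=(u_B)$; note $r$ agrees with the rank given by \eqref{e36}. The element $u_B a_i$ is the sum, with coefficient $1$, of the elements of the coset $B a_i$ in the orthonormal basis $A$ of $\BC[A]$, so
\[
 (u_B a_i,\, u_B a_j) \;=\; |Ba_i \cap Ba_j| \;=\; |B|\,\delta_{ij}.
\]
The Gram matrix is $|B|\cdot I_r$, giving $\vol(\al(B))^2=|B|^r=|B|^{|A|/|B|}$, which is exactly the claimed order.

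There is no serious obstacle; the only care needed is the bookkeeping for \eqref{e404}, namely checking that $\al(B)$ is primitive and that its orthogonal complement is $\beta(B)$ (not merely contained in it), which follows formally from the primitivity of $\beta(B)$ stated in the text. Everything else is a direct Gram-determinant computation.
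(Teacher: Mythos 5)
Your proof is correct and follows essentially the same route as the paper: both choose the $\BZ$-basis $u_B a_1,\dots,u_B a_r$ of $\al(B)$, observe the Gram matrix is $|B|\cdot I_r$ so $\vol(\al(B))^2=|B|^{|A|/|B|}$, and then apply \eqref{e404}. Your explicit check that $\al(B)$ is primitive with $\al(B)^\perp=\beta(B)$ is a welcome bit of bookkeeping that the paper leaves implicit.
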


\begin{proof} Let $y_1, \dots, y_\ell \in A$ be representatives of cosets of $B$ in $A$. Then $\ell = |A/B|$, and
the elements $y_1\,  u_B, \dots, y_\ell \, u_B$ form a $\BZ$-basis of $\al(B)$. It is easy to see that $(y_i \, u_B, y_j u_B)=0$ if $i\neq j$.
The length of each vector $y_j u_B$ is $(\sum_{b \in B} ||y_i  b||^2)^{1/2} = |B| ^{1/2}$. It follows that
$$\vol(\al(B)) = |B|^{\ell/2}.$$
From \eqref{e404} we have $|\BZ[A]/\big(\al(B) \oplus \beta(B)  \big)|= |B|^\ell = |B|^{|A/B|}$.
\end{proof}

\subsubsection{Decomposition corresponding to a collection of subgroups} \label{def}
Suppose $B_1,\dots, B_k$ are subgroups of a finite abelian group $A$. Let
$$ \al(B_1,\dots,B_k) = \sum_{j=1}^k \al(B_j), \quad \beta(B_1,\dots,B_k) = \bigcap_{j=1}^k \beta(B_j).$$

Then $\al(B_1,\dots,B_k)$ and $\beta(B_1,\dots,B_k)$ are primitive lattices of $\BZ[A]$, and they are orthogonal complement of each other in $\BZ[A]$.
In addition, both $\al(B_1,\dots,B_k)$ and $\beta(B_1,\dots,B_k)$ are ideals of $\BZ[A]$, and they are the annihilator of each other.





\begin{proposition} The finite group $\BZ[A]/\big( \beta(B_1,\dots, B_k) \oplus \al(B_1,\dots,B_k)\big)$ has order less than or equal to $\prod_{j=1}^k |B_j|^{|A|/|B_j|}$. Equivalently,
$$ \vol(\al(B_1,\dots,B_k)) \le \left ( \prod_{j=1}^k |B_j|^{|A|/|B_j|}\right)^{1/2}.$$
\label{p50}
 \end{proposition}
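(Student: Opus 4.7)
The two inequalities in the statement are equivalent: $\alpha := \alpha(B_1,\dots,B_k)$ is primitive with orthogonal complement $\beta := \beta(B_1,\dots,B_k)$, so by \eqref{e404} one has $\vol(\alpha)^2 = |\BZ[A]/(\alpha \oplus \beta)|$, while Proposition \ref{p15} gives $|B_j|^{|A|/|B_j|} = \vol(\alpha(B_j))^2$. Hence the proposition reduces to the multiplicative volume bound $\vol(\alpha(B_1,\dots,B_k)) \le \prod_{j=1}^k \vol(\alpha(B_j))$, which I plan to prove by induction on $k$. The base case $k=1$ is Proposition \ref{p15}, and the inductive step, applied with $L_1 := \alpha(B_1,\dots,B_{k-1})$ and $L_2 := \alpha(B_k)$ (so that $L_1 + L_2 = \alpha(B_1,\dots,B_k)$), reduces to the following key lemma: \emph{for any two $\BZ$-sublattices $L_1, L_2$ of $\BZ[A]$, one has $\vol(L_1 + L_2) \le \vol(L_1)\,\vol(L_2)$.}

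To prove the key lemma, consider the surjective summation map $\sigma: L_1 \oplus L_2 \to L_1 + L_2$, $(x, y) \mapsto x+y$, with $L_1 \oplus L_2$ given the orthogonal direct sum Hermitian structure. Its kernel $\{(v, -v) : v \in L_1 \cap L_2\}$ has volume $2^{m/2}\,\vol(L_1 \cap L_2)$, where $m := \rk(L_1 \cap L_2)$, because $\|(v,-v)\|^2 = 2\|v\|^2$. Equation \eqref{e23} applied to $\sigma$ then yields
$$\vol(L_1 + L_2) \;=\; \frac{{\det}'(\sigma)\,\vol(L_1)\vol(L_2)}{2^{m/2}\,\vol(L_1 \cap L_2)}.$$
Since any $\BZ$-sublattice of $\BZ[A]$ has an integer positive-definite Gram matrix and hence volume $\ge 1$, the lemma will follow once I establish ${\det}'(\sigma) \le 2^{m/2}$.

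Write $W_i := L_i \otimes \BC$ and $W := W_1 + W_2$; then $\sigma\sigma^* = (P_{W_1} + P_{W_2})|_W$, with $P_{W_i}$ the orthogonal projection onto $W_i$. The orthogonal decomposition $W = (W_1 \cap W_2) \oplus W_0$, where $W_0 := (W_1 \cap W_2)^{\perp_W}$, is preserved by $\sigma\sigma^*$. On $W_1 \cap W_2$ the operator acts as $2\,\mathrm{id}$, contributing $2^m$ to the determinant. On $W_0$, letting $W_i^\sharp$ be the orthogonal complement of $W_1 \cap W_2$ inside $W_i$, one has $W_0 = W_1^\sharp + W_2^\sharp$ with $W_1^\sharp \cap W_2^\sharp = 0$, and $(P_{W_1} + P_{W_2})|_{W_0} = (P_{W_1^\sharp} + P_{W_2^\sharp})|_{W_0}$; this restricted operator has strictly positive eigenvalues (because $W_1^\sharp \cap W_2^\sharp = 0$) and trace equal to $\dim W_1^\sharp + \dim W_2^\sharp = \dim W_0$. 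The arithmetic-geometric mean inequality on the positive eigenvalues then forces its determinant to be $\le 1$, whence $\det(\sigma\sigma^*|_W) \le 2^m$ and ${\det}'(\sigma) \le 2^{m/2}$, as required.

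The crux of the argument is the determinant bound on $W_0$: one needs $W_1^\sharp \cap W_2^\sharp = 0$ to guarantee strictly positive eigenvalues so that AM--GM applies, combined with the exact trace-equals-dimension identity to pin the geometric mean at or below $1$; without these two ingredients the operator $(P_{W_1^\sharp} + P_{W_2^\sharp})|_{W_0}$ could have determinant exceeding $1$ and the entire induction would collapse.
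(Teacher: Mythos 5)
Your proof is correct, but it takes a genuinely different route from the paper. The paper's argument stays entirely at the level of finite quotient groups: it writes $\BZ[A]/(\al+\beta)$ as $(\BZ[A]/\al)/\bigl(\cap_j\beta(B_j)\bigr)$, injects this into $\prod_j(\BZ[A]/\al)/\beta(B_j)=\prod_j(\BZ[A]/\beta(B_j))/\al$, observes that each factor is a quotient of $(\BZ[A]/\beta(B_j))/\al(B_j)$, and then invokes Proposition~\ref{p15}; no singular values or projections appear, and the inequality $\vol(\al)\le\prod_j\vol(\al(B_j))$ emerges only as an afterthought via~\eqref{e404}. You instead isolate a clean lattice-theoretic inequality --- $\vol(L_1+L_2)\,\vol(L_1\cap L_2)\le\vol(L_1)\,\vol(L_2)$ for sublattices of the fundamental lattice, deduced from the determinant bound $\det'(\sigma)\le 2^{m/2}$ for the summation map $\sigma$ --- and then feed it into an induction on $k$ with base case Proposition~\ref{p15}. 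Your argument is longer but it buys more: the key lemma is a general statement about integral lattices of independent interest, and the mechanism (split $\sigma\sigma^*$ into its $2\cdot\mathrm{id}$ part on $W_1\cap W_2$ and a trace-$\dim W_0$ positive part on $W_0$, then AM--GM) is transparent and self-contained, whereas the paper's short proof leans on algebraic manipulations of ideals that are specific to the group-ring setting. One small attribution slip near the end: strict positivity of $(P_{W_1^\sharp}+P_{W_2^\sharp})|_{W_0}$ is really a consequence of $W_0=W_1^\sharp+W_2^\sharp$ (a vector $w\in W_0$ annihilated by the operator would be orthogonal to both $W_1^\sharp$ and $W_2^\sharp$, hence to $W_0$, hence zero), while the condition $W_1^\sharp\cap W_2^\sharp=0$ is what makes the trace identity $\dim W_1^\sharp+\dim W_2^\sharp=\dim W_0$ come out exactly. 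Both facts are true and both are needed, so the proof stands; also note that AM--GM only needs nonnegativity, which is automatic for a sum of orthogonal projections.
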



\begin{proof}  We write $\al= \al(B_1,\dots,B_k)$ and $\beta= \beta(B_1,\dots,B_k)$. Recall that $\beta = \cap_{j=1}^k \beta(B_j)$. We have

$$ \BZ[A]/(\al + \beta) \cong (\BZ[A]/\al)/\beta = (\BZ[A]/\al)/ \left(  \cap_{j=1}^k \beta(B_j) \right)
.$$

Since $(\BZ[A]/\al)/ \left(  \cap_{j=1}^k \beta(B_j)\right)$ injects in $ \prod_{j=1}^k (\BZ[A]/\al)/  \beta(B_j) = \prod_{j=1}^k (\BZ[A]/\beta(B_j) )/ \al  $, we have

\be
 \left |\BZ[A] /(\al + \beta) \right| \le \prod_{j=1}^k \left | (\BZ[A]/\beta(B_j) )/ \al \right|.
 \label{e11}
 \ee

Since $\al(B_j) \subset \al$,  $(\BZ[A]/\beta(B_j) )/ \al(B_j)  $ surjects onto $(\BZ[A]/\beta(B_j )/ \al $, hence
\be
  \left|  (\BZ[A]/\beta(B_j )/ \al \right| \le  \left| (\BZ[A]/\beta(B_j) )/ \al(B_j) \right |= \left | \BZ[A]/(\al(B_j) + \beta(B_j)) \right|.
\label{e16}
\ee
Inequalities \eqref{e11} and \eqref{e16}, together with Proposition \ref{p15}, show that

$$ \left | \BZ[A]/(\al \oplus \beta)\right|  \le \prod_{j=1}^k |B_j|^{ |A|/|B_j|}.$$
The equivalence between the two statements follows from \eqref{e404}.
\end{proof}

\subsection{Torsion points in algebraic varieties} We recall well-known facts about algebraic subgroups of $(\BC^*)^n$.

\subsubsection{Algebraic subgroups of $(\BC^*)^n$} With respect to the usual multiplication $\BC^* := \BC \setminus \{0\}$ is an abelian group, and so is $(\BC^*)^n$.
An {\em algebraic subgroup of $(\BC^*)^n$} is a subgroup which is closed in the Zariski topology.

For a lattice, i.e. a subgroup,  $\Lambda$ of $\BZ^n$, not necessarily of maximal rank,  recall that $I(\Lambda)$ is the ideal of $\cR$ generated by $ 1- t^\bk, \bk\in \Lambda$. Let
$G(\Lambda) = V_{I(\Lambda) }$,  the zero-set of $I(\Lambda) $, i.e. the set of all  $\bz \in \BC^n $ such that $\bz^\bk-1=0$ for every $\bk \in \Lambda$. Here for $\bk=(k_1,\dots,k_n) \in \BZ^n$ and $\bz=(z_1,\dots,z_n)\in (\BC^*)^n$ we set
$t^\bk = \prod_{i} t_i^{k_i}$ and $\bz^\bk = \prod_i z_i^{k_i}$.

It is easy to see that $G(\Lambda)$ is an algebraic subgroup.  The converse holds true: Every algebraic subgroup is equal to $G(\Lambda)$ for some lattice $\Lambda$, see  \cite{Schmidt_a}.

Every element $\bz \in G(\Lambda)$ defines a character $\chi_\bz$ of the quotient group $A_\Lambda:= \BZ/\Lambda$ via
$$ \chi_\bz(t^\bk) = \bz^\bk,$$
and conversely, every character of $A_\Lambda$ arises in this way.
Thus one can identify $G(\Lambda)$ with  $\Hom(A_\Lambda,\BC^*)$ via $\bz \to \chi_\bz$. We will write $e_\bz$ for the idempotent $\chi_{\chi_\bz}$, and the decomposition \eqref{e13}, with $\Lambda$ having maximal rank, now becomes

\be \BC[A_\Lambda] = \bigoplus_{\bz \in G(\Lambda)} \BC \,e_\bz.
\label{e133}
\ee

\def \cU {\mathbb U}
\subsubsection{Torsion points} A point $\bz \in (\BC^*)^n$ is a {\em torsion point} if it is a torsion element of the multiplicative group $(\BC^*)^n$.   Let $\cU$ denote the set of all roots of unity in  $\BC^*$. Then the set of torsion points of $(\BC^*)^n$ is $\cU^n$. For example, if $\Gamma \subset \BZ^n$ is a lattice of maximal rank, then $G(\Gamma)\subset \cU^n$.

The following fact is well-known in the theory of torsion points on algebraic varieties.
\begin{proposition} Suppose  $X \neq \BC^n$ is an algebraic subset of $\BC^n$ defined over $\BQ$. There exist a finite number of non-zero lattices $\Lambda_1,\dots, \Lambda_k$  in $\BZ^n$,  such that
$ \cU^n  \cap X \subset \cU^n \cap \big(\cup_{j=1}^kG(\Lambda_j)   \big)$, i.e.
any torsion point in $X$ belongs to $\cup_{j=1}^kG(\Lambda_j)$.
\label{p10}
\end{proposition}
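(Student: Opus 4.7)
The plan is to combine a classical theorem on torsion points in subvarieties of $(\BC^*)^n$ with the explicit construction of lattices provided earlier in this section. First I would replace $X$ by $X \cap (\BC^*)^n$: since $(\BC^*)^n$ is Zariski dense in $\BC^n$ and $X$ is a proper algebraic subset, $X \cap (\BC^*)^n$ is a proper Zariski-closed subset of the torus $(\BC^*)^n$, and it still contains all the torsion points we care about. Without loss of generality I would then decompose into irreducible components, so it suffices to prove the claim when $X$ is an irreducible proper subvariety of $(\BC^*)^n$ defined over $\BQ$.

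Next I would invoke the Manin--Mumford theorem for the multiplicative group (proved by Laurent, after the Ihara--Serre--Tate theorem for subvarieties defined over $\BQ$): the Zariski closure of the set of torsion points in $X$ is a finite union of torsion cosets $\zeta_j H_j$, where each $\zeta_j \in \cU^n$ and each $H_j$ is an algebraic subtorus of $(\BC^*)^n$ contained in $X$. Because $X \neq (\BC^*)^n$, every such coset is a proper subset of the torus, so each $H_j$ is a proper algebraic subgroup. By the identification recalled just above the proposition, $H_j = G(\Lambda'_j)$ for some non-zero lattice $\Lambda'_j \subset \BZ^n$.

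Finally I would bundle each torsion coset $\zeta_j H_j$ inside a $G(\Lambda_j)$ with $\Lambda_j$ non-zero, as follows. For each $j$, let
\[
M_j \;=\; \{\bk \in \BZ^n : \zeta_j^{\bk}=1 \}.
\]
Since $\zeta_j$ is a torsion point, $M_j$ has finite index in $\BZ^n$, and in particular $M_j$ has full rank $n$. Set $\Lambda_j := M_j \cap \Lambda'_j$. Because $M_j$ has finite index, some positive integer multiple of every vector of $\Lambda'_j$ lies in $M_j$, so $\Lambda_j$ has the same rank as $\Lambda'_j$ and is non-zero. For every $h \in H_j = G(\Lambda'_j)$ and every $\bk \in \Lambda_j$,
\[
(\zeta_j h)^{\bk} \;=\; \zeta_j^{\bk}\, h^{\bk} \;=\; 1\cdot 1 \;=\; 1,
\]
so $\zeta_j H_j \subset G(\Lambda_j)$. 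Taking the union over $j$ gives the desired finite collection.

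\textbf{Where the difficulty sits.} All of the elementary work is in the last paragraph (converting torsion cosets into subgroup-through-the-origin form). The substantive content is the citation of the Manin--Mumford / Ihara--Serre--Tate theorem, which supplies the finiteness of the torsion cosets; this is the one genuinely non-trivial input, and is precisely what the phrase ``well-known in the theory of torsion points on algebraic varieties'' is meant to invoke.
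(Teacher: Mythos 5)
Your proof is correct and follows essentially the same route as the paper: both invoke Laurent's theorem to cover the torsion points of $X$ by finitely many torsion cosets $\zeta_j H_j$, and then absorb each coset into an algebraic subgroup of the form $G(\Lambda_j)$ with $\Lambda_j \neq 0$. The only cosmetic difference is in that last step: you construct $\Lambda_j = M_j \cap \Lambda'_j$ by hand (intersecting the annihilator lattice of $\zeta_j$ with the lattice cutting out $H_j$), whereas the paper observes that $\langle \zeta_j \rangle H_j$ is itself an algebraic subgroup of dimension $\leq n-1$ and cites the classification of such subgroups to write it as $G(\Lambda_j)$ directly.
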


\begin{proof} A {\em torsion coset} is a coset $uG$, where $u$ is torsion point and $G$ is an algebraic subgroup of $(\BC^*)^n$. It is well known that there is a finite number of torsion
cosets $u_j G_j \subset X$ such that every torsion point in $X$ belongs to $\cup_j u_j G_j$, see  \cite{Laurent,Schmidt_a}. Since $u_j G_j \subset X$, the dimension of $G_j$ is at most $n-1$. Let $U_j$ be the finite cyclic group generated by $u_j$. Then $U_jG_j$ is also an algebraic group of dimension $ \le n-1$. Hence $U_j G_j = G(\Lambda_j)$, with $\Lambda_j$ a non-zero lattice. Since $u_jG_j \subset U_jG_j$, it is clear that
every torsion point in $X$ belongs to $\cup_jG(\Lambda_j)$.
\end{proof}

\subsection{Elementary bounds from exact sequences}

\begin{lemma}Suppose $M$ is a finitely-generated $\cR$-module with a free resolution

$$ \dots \to \cR^{m_2} \overset{\partial_2}{\longrightarrow} \cR^{m_1} \overset{\partial_1}{\longrightarrow} \cR^{m_0} \to M \to 0,,$$
and $Q$ is a $\cR$-module with $|Q| < \infty$. Then $|{\operatorname{Tor}}_i^{\cR}(M, Q)| \le |Q|^{m_i}$ for every $i=0,1,\dots$.
\label{l7}
\end{lemma}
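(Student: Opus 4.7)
The plan is to compute $\operatorname{Tor}_i^\cR(M,Q)$ directly from the given free resolution and observe that it is a subquotient of a finite group whose cardinality is easy to bound.

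First I would recall that $\operatorname{Tor}_i^\cR(M,Q)$ is, by definition, the $i$-th homology of the complex obtained from the free resolution of $M$ by deleting the $M$ term and tensoring with $Q$:
\[
\cdots \to \cR^{m_2}\otimes_\cR Q \xrightarrow{\partial_2\otimes 1} \cR^{m_1}\otimes_\cR Q \xrightarrow{\partial_1\otimes 1} \cR^{m_0}\otimes_\cR Q \to 0.
\]
Since tensor product commutes with finite direct sums and $\cR\otimes_\cR Q\cong Q$, we have a canonical identification $\cR^{m_i}\otimes_\cR Q\cong Q^{m_i}$. In particular, this group is finite of order exactly $|Q|^{m_i}$.

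Now $\operatorname{Tor}_i^\cR(M,Q) = \ker(\partial_i\otimes 1)/\operatorname{im}(\partial_{i+1}\otimes 1)$ is a subquotient of $\cR^{m_i}\otimes_\cR Q\cong Q^{m_i}$, so
\[
|\operatorname{Tor}_i^\cR(M,Q)|\le |Q^{m_i}| = |Q|^{m_i}.
\]
For $i=0$ we have $\operatorname{Tor}_0^\cR(M,Q)=M\otimes_\cR Q$, which is the quotient $Q^{m_0}/\operatorname{im}(\partial_1\otimes 1)$, giving the same bound. This finishes the argument.

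There is essentially no obstacle here; the only things to check are the standard identification of $\operatorname{Tor}$ with the homology of the tensored resolution and the compatibility of $\otimes_\cR$ with finite free modules. Both are routine, so the lemma amounts to the observation ``a subquotient of a finite group is no larger than the group itself.''
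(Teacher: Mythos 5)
Your proof is correct and follows essentially the same approach as the paper: tensor the free resolution with $Q$, identify $\cR^{m_i}\otimes_\cR Q\cong Q^{m_i}$, and note that $\operatorname{Tor}_i$ is a subquotient of this finite group of order $|Q|^{m_i}$.
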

\begin{proof} By definition, ${\operatorname{Tor}}_i^{\cR}(M, Q)$ is the homology groups of the complex
$$ \dots \to \cR^{m_2}\otimes Q  \to\cR^{m_1}\otimes Q \to \cR^{m_0}\otimes Q \to 0 .$$
 Since the $i$-th term of the this complex is $\cR^{m_i} \otimes Q \cong Q^{m_i}$, a finite group of order $|Q|^{m_i}$, its $i$-th homology group has $\le |Q|^{m_i}$ elements.
\end{proof}

\begin{lemma}
 Suppose in  an exact sequence of abelian groups
$$ \dots  \to N_1(\Gamma) \to M_1(\Gamma) \to M_2(\Gamma) \to N_2(\Gamma) \to \dots $$
 each $M_i(\Gamma),N_i(\Gamma)$ is an abelian group depending on subgroups $\Gamma \subset \BZ^n$ of finite index. Assume further that
$N_1(\Gamma)$ and $N_2(\Gamma)$ are finite, and
$$ |N_1(\Gamma)| \sim 1 \sim |N_2(\Gamma)|.$$
Then
$$|\Tor_\BZ(M_1(\Gamma))| \sim |\Tor_\BZ(M_2(\Gamma))|.$$
\label{l8}
\end{lemma}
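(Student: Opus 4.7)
The plan is to cut the long exact sequence into two short exact sequences at the image of the middle map, and then handle each by an elementary $\Tor_\BZ$-comparison based on the fact that its outer term is finite with negligible growth rate.

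Let $f \colon M_1(\Gamma) \to M_2(\Gamma)$ denote the middle map and set $I(\Gamma) := \im f$. Exactness of the given sequence immediately yields $|\ker f| \le |N_1(\Gamma)|$ (since $\ker f$ is the image of $N_1(\Gamma)$ in $M_1(\Gamma)$) and $|\coker f| \le |N_2(\Gamma)|$ (since $\coker f$ injects into $N_2(\Gamma)$), so by hypothesis both $|\ker f|$ and $|\coker f|$ have negligible growth rate. Thus I obtain two short exact sequences
\[
0 \to \ker f \to M_1(\Gamma) \to I(\Gamma) \to 0 \qquad \text{and} \qquad 0 \to I(\Gamma) \to M_2(\Gamma) \to \coker f \to 0,
\]
each with one finite outer term.

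The second step is the following elementary observation about $\Tor_\BZ$. For any short exact sequence $0 \to A \to B \to C \to 0$ of abelian groups: (i) if $A$ is finite then $|\Tor_\BZ(B)| = |A|\cdot|\Tor_\BZ(C)|$, because $A \subseteq \Tor_\BZ(B)$ and any lift $b$ of a torsion element $c$ with $nc = 0$ satisfies $nb \in A$, hence is torsion of order dividing $n|A|$, showing the torsion sequence $0 \to A \to \Tor_\BZ(B) \to \Tor_\BZ(C) \to 0$ is exact; (ii) if instead $C$ is finite then $|\Tor_\BZ(A)| \le |\Tor_\BZ(B)| \le |C| \cdot |\Tor_\BZ(A)|$, extracted from the exact sequence $0 \to \Tor_\BZ(A) \to \Tor_\BZ(B) \to C' \to 0$ in which $C' \subseteq C$ is the image of $\Tor_\BZ(B)$. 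Applying (i) to the first short sequence and (ii) to the second yields
\[
|\Tor_\BZ(M_1(\Gamma))| = |\ker f| \cdot |\Tor_\BZ(I(\Gamma))|, \qquad |\Tor_\BZ(I(\Gamma))| \le |\Tor_\BZ(M_2(\Gamma))| \le |\coker f| \cdot |\Tor_\BZ(I(\Gamma))|.
\]

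Combining these sandwiches the ratio:
\[
\frac{|\ker f|}{|\coker f|} \;\le\; \frac{|\Tor_\BZ(M_1(\Gamma))|}{|\Tor_\BZ(M_2(\Gamma))|} \;\le\; |\ker f|,
\]
and both outer quantities have negligible growth rate (being bounded in terms of $|N_1(\Gamma)|$ and $|N_2(\Gamma)|$), so the middle ratio does too, proving $|\Tor_\BZ(M_1(\Gamma))| \sim |\Tor_\BZ(M_2(\Gamma))|$. The only mildly subtle ingredient is the lifting argument in (i); the rest is purely formal and I do not anticipate any real obstacle.
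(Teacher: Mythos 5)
Your proof is correct and follows essentially the same route as the paper's: you splice the four-term sequence at the image of the middle map and compare torsion orders across the two resulting short exact sequences, using the same key facts that a finite subgroup contributes exactly its order to the torsion and a finite quotient bounds the torsion ratio. The only cosmetic difference is that you name the pieces $\ker f$ and $\coker f$, whereas the paper replaces $N_1$ by a quotient and $N_2$ by a subgroup first, which amounts to the same thing.
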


\begin{proof} Replacing $N_1$ by an appropriate quotient and $N_2$ by an appropriate subgroup, we may assume that
$$ 0 \to N_1(\Gamma) \to M_1(\Gamma) \to M_2(\Gamma) \to N_2(\Gamma) \to 0 $$
is exact. We then have

\be M_1/N_1 \hookrightarrow M_2 \twoheadrightarrow N_2. \label{e20}
\ee
 The inclusion in \eqref{e20} shows that
$$ |\Tor_\BZ (M_1)| /|N_1| \le |\Tor_\BZ(M_2)|,$$
and the surjecion in \eqref{e20} shows that
$$ \Tor_\BZ (M_2)  \le |N_2| \,  |\Tor_\BZ(M_1/N_1)| = |N_2| \,  |\Tor_\BZ(M_1)| /|N_1|.$$
From there we get the conclusion of the lemma.
\end{proof}

\begin{lemma}
 Suppose $\fB$ is a free abelian group of finite rank, and $p,q: \fB \to \fB$ are $\BZ$-linear operators with $p$ injective. Let $\cC$ be the complex
$$ 0 \to \fB \overset {\partial_2} {\longrightarrow} \fB^2 \overset {\partial_1} {\longrightarrow} \fB \to 0,$$
where $\partial_2(a) = (-q (a ), p(a))$, $\partial_1(a,b) = p(a) + q(b)$.
Then the homology groups of $\cC$ are finite, and
$$ |H_1(\cC) | = |H_0 (\cC)|.$$
\label{l90}
\end{lemma}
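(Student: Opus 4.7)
The plan is to present $\cC$ as the middle term of a short exact sequence of complexes
$$0 \longrightarrow \cC' \longrightarrow \cC \longrightarrow \cC'' \longrightarrow 0,$$
where $\cC'$ and $\cC''$ are each (a shifted copy of) the two-term complex $\fB \overset{p}{\longrightarrow} \fB$, and then extract $|H_1(\cC)|=|H_0(\cC)|$ from the associated long exact sequence.

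Writing $\fB^2=\fB_a\oplus \fB_b$ according to the decomposition $\partial_1(a,b)=p(a)+q(b)$, I take $\cC'$ to be the subcomplex
$$\cC':\quad 0\longrightarrow 0 \longrightarrow \fB_a \overset{p}{\longrightarrow} \fB \longrightarrow 0$$
and $\cC''$ the quotient complex
$$\cC'':\quad 0\longrightarrow \fB \overset{p}{\longrightarrow} \fB_b \longrightarrow 0 \longrightarrow 0.$$
Verifying that $\cC'$ is genuinely a subcomplex needs only two observations: $\partial_1(\fB_a\oplus 0)=p(\fB)\subset \fB$, and $\im\partial_2\cap(\fB_a\oplus 0)=0$ since $\partial_2(c)\in \fB_a\oplus 0$ forces $p(c)=0$, hence $c=0$ by the injectivity of $p$. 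The quotient $\cC''$ then inherits a well-defined differential, and a direct check shows it is $p$.

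Because $p$ is an injective endomorphism of the finitely generated free abelian group $\fB$, the quotient $T:=\fB/p(\fB)$ is a finite abelian group. The homology of $\cC'$ is concentrated in degree $0$ and equals $T$, while that of $\cC''$ is concentrated in degree $1$ and equals $T$. Combined with $H_2(\cC)=\ker\partial_2=0$ (again by injectivity of $p$), the long exact sequence in homology collapses to
$$0 \longrightarrow H_1(\cC) \longrightarrow T \overset{\delta}{\longrightarrow} T \longrightarrow H_0(\cC) \longrightarrow 0.$$
A zigzag chase through the snake lemma identifies $\delta$ as the map $T\to T$ induced by $q$, but this identification is not needed for the conclusion.

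From the four-term exact sequence, $H_0(\cC)$ and $H_1(\cC)$ are subquotients of the finite group $T$, hence finite. Moreover $H_1(\cC)\cong\ker\delta$ and $H_0(\cC)\cong \coker\delta$, and for any endomorphism $\delta$ of a finite abelian group the kernel and cokernel have the same order, because
$$|\ker\delta|\cdot|\im\delta|=|T|=|\coker\delta|\cdot|\im\delta|.$$
This yields $|H_1(\cC)|=|H_0(\cC)|$. The only non-routine step is checking that $\cC'$ really is a subcomplex of $\cC$, which is exactly the observation about $\fB_a\oplus 0$ in the second paragraph; everything else is formal bookkeeping with the long exact sequence.
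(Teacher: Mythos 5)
Your decomposition $0\to\cC'\to\cC\to\cC''\to 0$ with $\cC'$ and $\cC''$ shifted copies of $\fB\overset{p}{\to}\fB$ is exactly the short exact sequence of complexes the paper constructs (its $\cC_1$ and $\cC_2$), and the four-term long exact sequence argument is the same, so this is essentially the paper's proof. One small remark: the observation $\im\partial_2\cap(\fB_a\oplus 0)=0$ is not actually needed to see that $\cC'$ is a subcomplex (since $\cC'$ has $0$ in degree $2$, nothing about $\partial_2$ must be checked); injectivity of $p$ enters rather in computing $H_1(\cC')=\ker p=0$ and $H_2(\cC'')=\ker p=0$, which is what makes the four-term sequence exact.
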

\begin{proof} The complex $\cC$ is the middle row of  the commutative diagram with exact columns

$$\begin{CD} 0  @ >>>    0     @ >>>          \fB @ > p >>    \fB @ >>> 0 \\
@VVV @ VVV @VV i_1V  @VVV @VVV \\
           0   @>>>    \fB   @ > \partial_2>>  \fB \oplus  \fB @  > \partial_1 >> \fB @ >>> 0 \\
           @VVV @ VVV @VV i_2V  @VVV @VVV \\
           0 @ >>>          \fB @ > p >>    \fB @ >>> 0 @ >>> 0
\end{CD}.
$$
Here $i_1(a)= (a,0), i_2(a,b)=b$.
Let $\cC_1$ be the first row and $\cC_2$ the last row. Then $0 \to \cC_1 \to \cC\to \cC_2\to 0$ is exact. The long exact sequence, together with $H_1(\cC_1)= H_0(\cC_2)$, gives us
the following exact sequence
$$ 0 \to H_1(\cC) \to H_1(\cC_2) \to H_0(\cC_1) \to H_0(\cC) \to 0.$$
Note that $|H_1(\cC_2)|= |H_0(\cC_1)| =|\coker p|$, which is finite since $p$ is injective. It follows that  $H_1(\cC)$ and $H_0(\cC)$ are finite. In an exact sequence of finite abelian groups,
the alternating product of the cardinalities is 1. Hence, with the two middle terms having $|H_1(\cC_2)|= |H_0(\cC_1)|$, we must have $|H_1(\cC)| =| H_0(\cC)|$.
\end{proof}

\section{Approximation of $\BZ[\BZ^n/\Gamma]$} \label{approx}

\subsection{Approximation of $\BZ[A_\Gamma]$: Formulation of results} As mentioned in the introduction, we search for a good approximation of $\BZ[\BZ^n /\Gamma]$ as $\la \Gamma \ra \to \infty$. The approximation  depends on some extra choice, namely, a non-zero element $p \in \cR$.

Fix a {\em  non-zero} Laurent polynomial $p\in \cR= \BZ[t^{\pm 1},\dots,t^{\pm n}]$. For each subgroup $\Gamma \subset \BZ^n$ of rank $n$ we will construct an  $\cR$-module $\beta(p;\Gamma)$ with the following properties.

\begin{proposition} \label{mainx}

(i) For every finitely generated $\cR$-module $M$ one has
$$ |\Tor_\BZ (M \otimes \BZ[A_\Gamma] )|  \sim | \Tor_\BZ (M \otimes \beta(p;\Gamma) )  |.$$

(ii) Suppose $p\in I$, where $ I \neq \cR$ is ideal of $\cR$ such that $\cR/I$ is pseudo-zero.
Then for each $i=0, 1$ the module ${\operatorname{Tor}}^{\cR}_i(\beta(p;\Gamma) , \cR/I)$ is finite, and
$$ | {\operatorname{Tor}}^{\cR}_i(\beta(p;\Gamma) , \cR/I)| \sim 1.
$$
\end{proposition}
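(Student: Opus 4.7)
\medskip

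\noindent\textbf{Plan.} I would define $\beta(p;\Gamma) := \beta(B_1,\ldots,B_k)$, the primitive sublattice of $\BZ[A_\Gamma]$ from Section~2.7.3, where $B_j = (\Lambda_j+\Gamma)/\Gamma \subset A_\Gamma$ and $\Lambda_1,\ldots,\Lambda_k$ are the nonzero lattices supplied by Proposition~\ref{p10} applied to the algebraic set $V(p) \subset (\BC^*)^n$. By construction $\beta(p;\Gamma)_\BC = \bigoplus_{\bz \in G(\Gamma) \setminus \cup_j G(\Lambda_j)} \BC e_\bz$, so $p$ is non-vanishing---hence acts injectively---on every isotypic component of $\beta(p;\Gamma)$.

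For part (i), I would start from the exact sequence $0 \to \al \oplus \beta \to \BZ[A_\Gamma] \to Q \to 0$ of Proposition~\ref{p50}, where $|Q| \le \prod_j |B_j|^{|A_\Gamma|/|B_j|}$. Since each $\Lambda_j$ is a fixed nonzero lattice and every nonzero $\bk \in \Lambda_j \cap \Gamma$ has norm $\ge \la\Gamma\ra$, we have $|B_j| = [\Lambda_j : \Lambda_j \cap \Gamma] \to \infty$ as $\la\Gamma\ra \to \infty$, which forces $|Q|^{1/|A_\Gamma|} \to 1$. Tensoring with $M$ and applying Lemmas~\ref{l7} and~\ref{l8} reduces (i) to the claim $|\Tor_\BZ(M \otimes \al)| \sim 1$. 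Using the identification $\al(B_j) \cong \BZ[A_{\Gamma+\Lambda_j}]$ from Section~2.7.2, one has $M \otimes \al(B_j) = M/I(\Gamma+\Lambda_j)M$, and a Smith-normal-form bound on a presentation matrix gives $|\Tor_\BZ(M \otimes \al(B_j))| \le \exp(O(|A_{\Gamma+\Lambda_j}|\log|A_{\Gamma+\Lambda_j}|)) = \exp(o(|A_\Gamma|))$, since $|B_j|/\log|A_\Gamma| \to \infty$. Summing over $j$ gives the claim.

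For part (ii), note that $V(I) \subset V(p)$ because $p \in I$, so all torsion points of $V(I)$ lie in $\cup_j G(\Lambda_j)$, which is disjoint from the support of $\beta(p;\Gamma)_\BC$. A localization at each $\bz$ of $G(\Gamma)\setminus\cup_j G(\Lambda_j)$ then shows $(\text{Tor}_i^\cR(\beta,\cR/I))_\BC = 0$ for $i=0,1$, so both Tor groups are finite. To bound $\text{Tor}_0 = \beta/I\beta$, tensor $0 \to \al \oplus \beta \to \BZ[A_\Gamma] \to Q \to 0$ with $\cR/I$: Proposition~\ref{pT3ps} (Schmidt's theorem in the pseudo-zero case) gives $|\Tor_\BZ(\cR/I \otimes \BZ[A_\Gamma])| \sim 1$, and the same proposition applied to $\cR/I \otimes \BZ[A_{\Gamma+\Lambda_j}]$ (combined with the crude bound as in (i)) gives $|\Tor_\BZ(\cR/I \otimes \al)| \sim 1$, so by Lemmas~\ref{l7},~\ref{l8} we conclude $|\cR/I \otimes \beta| \sim 1$.

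For $\text{Tor}_1^\cR(\cR/I,\beta)$, I would invoke Lemma~\ref{lps}(b) to choose $q \in I$ with $\gcd(p,q)=1$, so $(p,q)$ is a regular sequence in the UFD $\cR$. The Koszul complex $K(p,q)\otimes\beta$ is then precisely the complex of Lemma~\ref{l90} with $\fB = \beta$, and since $p$ acts injectively on $\beta$ by construction, Lemma~\ref{l90} yields $|\text{Tor}_1^\cR(\cR/(p,q),\beta)| = |\text{Tor}_0^\cR(\cR/(p,q),\beta)|$. Because $\cR/(p,q)$ is pseudo-zero ($\Delta_0 = \gcd(p,q)=1$), the right-hand side is $\sim 1$ by the argument for $\text{Tor}_0$ above, hence so is $|\text{Tor}_1^\cR(\cR/(p,q),\beta)|$. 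The main obstacle is the passage from $\cR/(p,q)$ to $\cR/I$: I would analyze the long exact Tor sequence of $0 \to I/(p,q) \to \cR/(p,q) \to \cR/I \to 0$, showing that $I/(p,q)$ admits a finite filtration whose subquotients are of the form $\cR/J$ with $J$ prime of height $\ge 2$ (so $\cR/J$ is pseudo-zero) and that the contribution of each such subquotient is negligible by Proposition~\ref{pT3ps} combined with the argument just given for $\cR/I$ itself.
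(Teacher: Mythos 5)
Your treatment of part (ii) is essentially the paper's own route: localization (equivalently, the paper's $\tal_\BC+I_\BC=\cR_\BC$ computation plus Lemma~\ref{l200}) for finiteness, the Koszul complex for the regular sequence $(p,q)$ with Lemma~\ref{l90} to handle $\operatorname{Tor}_1^\cR(\cR/(p,q),\beta)$, and the long exact sequence of $0\to I/(p,q)\to\cR/(p,q)\to\cR/I\to 0$ to finish. Two small inefficiencies: for $\operatorname{Tor}_0$, you do not need to re-run the $\al\oplus\beta$ argument, since part (i) applied to the module $\cR/I$ already gives $|\cR/I\otimes\beta|\sim|\cR/I\otimes\BZ[A_\Gamma]|\sim 1$; and the final filtration of $I/(p,q)$ into primes is unnecessary, because $I/(p,q)$ is itself a pseudo-zero module, so $|\Tor_\BZ((I/(p,q))\otimes\beta)|\sim 1$ follows directly from part (i) and Proposition~\ref{pT3ps}, and then the finite middle term of the Tor sequence is bounded by the product of the outer terms.

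Part (i), however, has a genuine gap in the step $|\Tor_\BZ(M\otimes\al)|\sim 1$, in two places. First, the claim that $|B_j|/\log|A_\Gamma|\to\infty$ is false. Take $n=2$, $\Gamma_m=m\BZ\oplus 2^m\BZ$, and $\Lambda_j=\BZ(1,0)$ (which arises e.g.\ for $p=1-t_1$). Then $\la\Gamma_m\ra=m\to\infty$, while $|B_j|=m$ and $\log|A_{\Gamma_m}|=\log(m\cdot 2^m)\sim m\log 2$, so $|B_j|/\log|A_{\Gamma_m}|$ stays bounded. Consequently the naive Smith/Hadamard bound $\exp\bigl(O(|A_{\Gamma+\Lambda_j}|\log|A_{\Gamma+\Lambda_j}|)\bigr)$ is \emph{not} $\exp(o(|A_\Gamma|))$ in general: in the example it is $\exp(\Theta(|A_\Gamma|))$. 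To kill the extra $\log$ factor one has to use the fact that the presentation matrix of $M\otimes\BZ[A_{\Gamma+\Lambda_j}]$ over $\BZ$ is sparse with rows of $\ell_2$-norm bounded independently of $\Gamma$ (L\"uck's operator-norm lemma), which is exactly what the paper's Lemma~\ref{l30} encodes. Second, even granting a bound on each $|\Tor_\BZ(M\otimes\al(B_j))|$, "summing over $j$" does not control $|\Tor_\BZ(M\otimes\al)|$: $\al=\sum_j\al(B_j)$ is a sum, not a direct sum, and the natural map $\bigoplus_j M\otimes\al(B_j)\twoheadrightarrow M\otimes\al$ is only a surjection; the $\BZ$-torsion of a quotient is not bounded by that of the source (consider $\BZ\twoheadrightarrow\BZ/N$). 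The paper sidesteps both issues by working with $\al$ itself: it applies Lemma~\ref{l30} to the restriction $\partial_{1,\al}:\al^{m_1}\to\al^{m_0}$ inside $\BC[A_\Gamma]^{m_\bullet}$, using $\|\partial_{1,\al}\|\le D$, $\rk\al=o(|A_\Gamma|)$ (Lemma~\ref{l5}(d)) and $\vol(\al)\sim 1$ (Lemma~\ref{l5}(e)), to get $|\Tor_\BZ(M\otimes\al)|\le D^{m_0\rk\al}\vol(\al)^{m_1}=\exp(o(|A_\Gamma|))$ in one stroke.
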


The remaining part of this section is devoted to the construction of $\beta(p;\Gamma)$ and the proof of Proposition \ref{mainx}.

\subsection{Heuristics} There is no rigorous mathematics in this subsection. Logically the reader can skip this subsection.

In the estimate of $\Tor_\BZ(M\otimes \BZ[A_\Gamma])$ using exact sequences, finiteness is very helpful. We will try to decompose $\BZ[A]$ as a sum of two
submodules, one is negligible, and the other if finite if tensoring with pre-given modules.

We have the decomposition
\eqref{e133} of $\BC[A_\Gamma]$ into irreducible components

$$ \BC[A_\Gamma] = \bigoplus_{\bz \in G(\Gamma)} \BC \, e_\bz. $$

The module $M \otimes \BC[A_\Gamma]$ will decompose accordingly.
Albeit over $\BC$, this gives us hint that some   $\bz \in G(\Gamma)$ are ``good" and some are ``bad".
Here a good $\bz$  must satisfy some non-degeneracy property, and if a point is good, all its Galois conjugates are good. Combining  all good points together one should
get some ``integral" sub-module of $\BZ[A_\Gamma]$ for which
non-degeneracy conditions imply some kind of finiteness.
 If $S$ is the set of all bad points, and $S^\perp$ be its complement in $G(\Gamma)$, then one has
$$ \BC[A_\Gamma] = \left (\bigoplus_{\bz \in S} \BC \, e_\bz \right) \oplus \left (\bigoplus_{\bz \in S^\perp} \BC \, e_\bz \right).$$
The module $\beta$ would be the ``integral spine" of the second part.

The set of bad points will consists of those in $G(\Gamma)$ which are zeros  $p$. For good points $\bz$, $p(\bz)\neq 0$, and this will give us the non-degeneracy condition. We control the set of bad points, which is the intersection $G(\Gamma) \cap V_p$,
by using theory of torsion points on $V_p$, see Proposition \ref{p10}.

\subsection{Definition of $\beta(p;\Gamma)$}  The zero set
$$ V_p: = \{ (z_1,\dots,z_n) \in (\BC^*)^n \mid p(z_1,\dots,z_n)= 0 \}$$
is an algebraic subset of $(\BC^*)^n$ of dimension $\le n-1$.
Let $\Lambda_1,\dots, \Lambda_k$ be the non-zero subgroups of $\BZ^n$  described in Proposition \ref{p10} with $X= V_p $.
By construction,

\be
 \text{ if a torsion point $\bz$ does not belong to $\bigcup_{j=1}^k G(\Lambda_j)$, then $p(\bz) \neq 0$.}
 \label{e5001}
 \ee

Suppose $A=A_\Gamma:= \BZ^n /\Gamma$, where $\Gamma \subset \BZ^n$ is a subgroup of maximal rank $n$. The abelian groups
$B_j = (\Lambda_j + \Gamma)/\Gamma$ are subgroups of $A= \BZ^n /\Gamma$. Let
$$\al(p;\Gamma) = \al(B_1,\dots,B_k), \quad \beta(p;\Gamma) = \beta(B_1,\dots, B_k),$$
where $\al(B_1,\dots,B_k)$ and $\beta(B_1,\dots, B_k)$ are ideals of $\BZ[A]$ defined as in Section \ref{def}.

We partition $G(\Gamma) = \hat A$ into two disjoint subsets $S, S^\perp$ by
\be
S = G(\Gamma) \bigcap \left( \bigcup_j G(\Lambda_j) \right), \quad S^\perp = G(\Gamma) \setminus S.
\label{e6001}
\ee
We will see that as $\la \Gamma \ra \to \infty$, $S$ is small compared to its complement $S^\perp$.
Note that $\chi_\bz$, with $\bz \in G(\Gamma)$, takes value 1 on $B_j$ exactly when $\bz \in G(\Lambda_j)$. Hence from \eqref{e34} we have
$$\al_\BC (B_j) = \bigoplus_{\bz \in G(\Gamma) \cap G(\Lambda_j)} \BC _\bz,$$
and hence

\be
\beta_\BC(p;\Gamma) = \bigoplus_{\bz \in  S^\perp} \BC _\bz,\quad \al_\BC(p;\Gamma) = \bigoplus_{\bz \in  S} \BC _\bz.
\ee

We will write $\al= \al(p;\Gamma), \beta = \beta(p;\Gamma)$. Let $\pr: \cR \to \BZ[A_\Gamma]$ be the canonical projection.
Note that $\pr^{-1}(0)$ is the
ideal of all polynomials taking values 0 at every point of $G(\Gamma)$. Similarly,
$\tal= \pr^{-1}(\al)$ is the
ideal of all polynomials taking values 0 at every point of $S^\perp$. Over $\BC$,
  $\tal_\BC$ is   the
reduced ideal of $\cR_\BC=\BC[t_1^{\pm1}, \dots, t_n^{\pm1}]$ whose zero set is $S^\perp$, $V_{\tal_\BC}= S^\perp$. In addition,

\be \cR_\BC /\tal_\BC \cong \BC[A]/\al_\BC \cong \beta_\BC.
\ee

The important facts concerning $\al(p;\Gamma)$ and $\beta(p;\Gamma)$  are the following.
\begin{lemma} \label{l5}

a)  $S^\perp= V_{\tal_\BC}$ does not intersect $V_p$. It follows that the ideal of $\cR_\BC$ generated by $p$ and $\tal_\BC$ is the whole $\cR_\BC$.

b) The multiplication map  $p:\beta \to \beta, x \to p\cdot x$, is injective. It follows that ${\operatorname{Tor}}^\cR_1(\cR/(p), \beta)=0$.

c) The quotient group $Q(p;\Gamma) := \BZ[A_\Gamma]/\big(\al(p;\Gamma) \oplus \beta(p;\Gamma)\big)$ is finite and its order is negligible,
$|Q(p;\Gamma)| \sim 1$.

d) $|S|= \rk_\BZ \al(p;\Gamma)$ is small compared to the $\rk_\BZ \BZ[A_\Gamma]= |A_\Gamma|$ in the sense that
$$\lim_{\la \Gamma \ra \to \infty }\frac{\rk_\BZ \al(p;\Gamma)}{|A_\Gamma|} = 0  .$$

e) One has $\vol(\al) \sim 1$.
\end{lemma}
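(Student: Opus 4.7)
The plan is to establish (a)--(e) in order, since (b)--(e) all rely on the geometric picture set up in (a) together with the growth $|B_j|\to\infty$ as $\la\Gamma\ra\to\infty$. For part (a), the first step is to verify $V_{\tal_\BC}=S^\perp$. Under the complexified projection $\pr_\BC\colon \cR_\BC \to \BC[A_\Gamma]$, evaluation in the decomposition \eqref{e133} gives $\pr_\BC(f)=\sum_{\bz\in G(\Gamma)} f(\bz)\, e_\bz$ up to a normalization factor, so $f\in \tal_\BC = \pr_\BC^{-1}(\al_\BC)$ precisely when $f(\bz)=0$ for every $\bz\in S^\perp$; since $S^\perp$ is a finite (hence reduced) subvariety of $(\BC^*)^n$, the ideal cutting it out is exactly $\tal_\BC$. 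By construction every point of $S^\perp$ is a torsion point of $G(\Gamma)$ lying outside $\bigcup_j G(\Lambda_j)$, so the defining inclusion \eqref{e5001} forces $S^\perp\cap V_p=\emptyset$. Hence the ideal $(p)+\tal_\BC$ has empty zero locus in $(\BC^*)^n$, and the Nullstellensatz for the Laurent polynomial ring $\cR_\BC$ gives $(p)+\tal_\BC=\cR_\BC$.

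For part (b), the orthogonal decomposition $\beta_\BC=\bigoplus_{\bz\in S^\perp}\BC\, e_\bz$ diagonalizes multiplication by $p$ as the operator with eigenvalues $\{p(\bz)\}_{\bz\in S^\perp}$, via the identity $\pr(p)\cdot e_\bz = \chi_\bz(p)\, e_\bz = p(\bz)\, e_\bz$. By (a) these eigenvalues are all non-zero, so $p\cdot$ is an automorphism of $\beta_\BC$ and in particular injective on the sublattice $\beta$. The vanishing $\operatorname{Tor}_1^\cR(\cR/(p),\beta)=0$ then drops out of tensoring the free resolution $0\to\cR\xrightarrow{\,p\,}\cR\to\cR/(p)\to 0$ with $\beta$, since $\operatorname{Tor}_1$ is precisely the kernel of $p\cdot\colon \beta\to\beta$.

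The common engine for (c), (d), and (e) is the claim that for each fixed $j$, $|B_j|\to\infty$ as $\la\Gamma\ra\to\infty$: picking any non-zero $\bk_j\in\Lambda_j$, the order $m$ of $\bk_j+\Gamma$ in $A_\Gamma$ is the smallest positive integer with $m\bk_j\in\Gamma$, which forces $m|\bk_j|\ge\la\Gamma\ra$, so $|B_j|\ge m\ge \la\Gamma\ra/|\bk_j|\to\infty$. Part (d) then follows from $|S|\le \sum_j |G(\Gamma)\cap G(\Lambda_j)|=\sum_j |A_\Gamma|/|B_j|$, giving $\rk_\BZ\al/|A_\Gamma|\le \sum_j 1/|B_j|\to 0$. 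For (c), finiteness of $Q(p;\Gamma)$ is immediate from $\al_\BC\oplus\beta_\BC=\BC[A_\Gamma]$, which forces $\al\oplus\beta$ to have full $\BZ$-rank in $\BZ[A_\Gamma]$; Proposition \ref{p50} then bounds $|Q(p;\Gamma)|\le \prod_j|B_j|^{|A_\Gamma|/|B_j|}$, whence $(\log|Q(p;\Gamma)|)/|A_\Gamma|\le \sum_j (\log|B_j|)/|B_j|\to 0$. The equivalent volume formulation of Proposition \ref{p50} applied to $\vol(\al)^2$ yields (e) by the same estimate. The main obstacle I foresee is the bookkeeping in (a), namely justifying $V_{\tal_\BC}=S^\perp$ and invoking the Nullstellensatz cleanly on the algebraic torus, because parts (b)--(e) all hinge on having $p(\bz)\neq 0$ for every $\bz\in S^\perp$.
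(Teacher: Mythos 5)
Your proposal is correct and follows essentially the same route as the paper: part (a) by identifying $V_{\tal_\BC}=S^\perp$ and invoking \eqref{e5001} with the Nullstellensatz, part (b) by diagonalizing $p$ on $\beta_\BC$, and parts (c)--(e) by the order-of-element argument showing $|B_j|\to\infty$ combined with Proposition \ref{p50} and \eqref{e404}. The only difference is cosmetic: the paper treats $V_{\tal_\BC}=S^\perp$ as already established in the preamble, whereas you rederive it, and your part (d) counts $|S|$ via $|G(\Gamma)\cap G(\Lambda_j)|$ rather than directly via $\rk\,\al(B_j)$, but these are identified by \eqref{e36}.
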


\begin{proof} a) Suppose $\bz \in S^\perp$. By definition \eqref{e6001}, $\bz$ is a torsion point not belonging to $\bigcup_{j=1}^k G(\Lambda_j)$. By \eqref{e5001}, $p(\bz)\neq 0$.
In other words,  $V_p \cap V_{\tal_\BC}=\emptyset$. By Nullstellensatz, the ideal  generated by $p$ and $\tal_\BC$ is the whole $\cR_\BC$.

b) Note that $\BC \, e_\bz$ is a $\cR_\BC$-module by the action $f\cdot e_\bz= f(\bz) \, e_\bz$. If $\bz \in S^\perp$, then $p(\bz) \neq 0 $, hence $p: \BC \, e_\bz \to \BC \, e_\bz$ is an isomorphism. Since $\beta_\BC = \bigoplus_{\bz \in S^\perp} \BC \, e_\bz$, the map $p : \beta_\BC \to \beta_\BC$ is also an isomorphism. It follows that $p :\beta \to \beta$ is injective.

One has ${\operatorname{Tor}}^\cR_1(\cR/(p), \beta)= \ker \left ( p:\beta \to \beta, x \to p\cdot x  \right)=0$.

c)
We will first show that
 for each $j=1,\dots, k$,
\be \lim_{\la \Gamma \ra \to \infty } |B_j| = \infty.
\label{e37}
\ee


By definition, $B_j= (\Lambda_j +\Gamma)/\Gamma$.
Fix an element $x \in \Lambda_j$, $x \neq 0$, and look at the degree of $x$ in $B_j= (\Lambda_j +\Gamma)/\Gamma$.
If $m |x| < \la \Gamma \ra$, then $m|x|$ does not belong to $\Gamma$ by the definition of $\la \Gamma \ra$, and hence $mx$ is not 0 in $B_j= (\Lambda_j +\Gamma)/\Gamma$.
This means the cyclic subgroup of $B_j$ generated by $x$ has order at least $\la \Gamma \ra /|x|$.
It follows that $|B_j| \ge \la \Gamma \ra /|x|$.  Hence
$ \lim_{\la \Gamma \ra \to \infty } |B_j| = \infty$.

From Proposition \ref{p50},
$$ |Q(p;\Gamma)|^{1/|A|} \le \prod_{j=1}^k |B_j|^{1/|B_j|},$$
from which and \eqref{e37} we get $|Q(p;\Gamma)| \sim 1$.

d) By \eqref{e36} one has $\rk(\al(B_j))= |A|/|B_j|$. Since $\al = \sum \al(B_j)$, one gets
\be
 \rk (\al)/|A|  \le \sum_{j=1}^k \rk \al(B_j)/|A| = \sum _{j=1}^k (1/|B_j|),
 \ee
which, with \eqref{e37}, shows that $\lim_{\la \Gamma \ra \to \infty }\frac{\rk_\BZ \al(p;\Gamma)}{|A_\Gamma|} = 0$.

e) This follows immediately  from \eqref{e404} and part (c).
\end{proof}


\subsection{Contribution from $\al(p;\Gamma)$ is negligible}

The ideals $\al$ and $\beta$, being $\BZ[A_\Gamma]$-module, can be naturally considered as $\cR$-modules.

\begin{lemma} Suppose $M$ is a finitely generated $\cR$-module. Then
$$ |\Tor_\BZ(M \otimes \al(p;\Gamma))| \sim 1.$$

\label{l9}
\end{lemma}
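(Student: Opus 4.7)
The strategy is to realize $M\otimes_\cR \al(p;\Gamma)$ as the cokernel of an explicit $\BZ$-linear map between finite-rank lattices, to bound the $\BZ$-torsion of that cokernel by a Hadamard-type estimate in the spirit of Lemma \ref{l30}, and then to feed in the three key facts about $\al=\al(p;\Gamma)$ from Lemma \ref{l5}: namely $\vol(\al)\sim 1$, $\rk_\BZ\al=|S|$ with $|S|/|A_\Gamma|\to 0$, and $\al$ is primitive in $\BZ[A_\Gamma]$.

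First I would pick a finite presentation $\cR^{m_1}\overset{\partial_1}{\longrightarrow}\cR^{m_0}\to M\to 0$, so that right exactness of $-\otimes_\cR\al$ identifies $M\otimes\al$ with the cokernel of the induced map $f\colon\al^{m_1}\to\al^{m_0}$. The map $f$ is well-defined and $\BZ$-linear because each entry of $\partial_1$ acts on $\al$ via its $\pr$-image multiplying inside the $\BZ[A_\Gamma]$-ideal $\al$. Since $\al$ has finite $\BZ$-rank, $\Tor_\BZ(M\otimes\al)$ is automatically finite.

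Next I would mimic the proof of Lemma \ref{l30}, viewing $\al^{m_i}$ as a (non-maximal-rank) $\BZ$-sublattice of the fundamental lattice $\BZ[A_\Gamma]^{m_i}\subset \BC[A_\Gamma]^{m_i}$. Primitivity of $\al$ in $\BZ[A_\Gamma]$, recorded in Section \ref{def}, forces the saturation $\overline{f(\al^{m_1})}$ computed inside $\al^{m_0}$ to coincide with the one computed inside $\BZ[A_\Gamma]^{m_0}$, so $\Tor_\BZ(\coker f)=\overline{f(\al^{m_1})}/f(\al^{m_1})$ and $|\Tor_\BZ(\coker f)|=\vol(f(\al^{m_1}))/\vol(\overline{f(\al^{m_1})})$. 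Since the ambient Hermitian product has $A_\Gamma$ as an orthonormal basis, every $\BZ$-sublattice of $\BZ[A_\Gamma]^{m_i}$ has integer Gram matrix and hence volume at least $1$; in particular $\vol(\ker f)\ge 1$ and $\vol(\overline{f(\al^{m_1})})\ge 1$. Combined with the identity ${\det}'(f)\,\vol(\al^{m_1})=\vol(\ker f)\,\vol(f(\al^{m_1}))$ this gives
$$|\Tor_\BZ(\coker f)|\;\le\;{\det}'(f)\,\vol(\al)^{m_1}\;\le\;\|f\|^{m_0|S|}\,\vol(\al)^{m_1}.$$

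Finally I would verify that both factors grow subexponentially in $|A_\Gamma|$. The factor $\vol(\al)^{m_1}$ is $\sim 1$ by Lemma \ref{l5}(e). For $\|f\|$, each entry of $\partial_1$ acts on $\BC[A_\Gamma]$ by group-ring multiplication, an operator of norm bounded by the $L^1$-norm of its $\pr$-image (since elements of $A_\Gamma$ act as unitaries) and hence by the $L^1$-norm of the entry in $\cR$; thus $\|f\|\le C_M$ with $C_M$ depending only on the fixed presentation of $M$. Combining this with $|S|/|A_\Gamma|\to 0$ from Lemma \ref{l5}(d) yields $(C_M^{m_0|S|})^{1/|A_\Gamma|}\to 1$, finishing the proof. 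The main obstacle is a mild hypothesis mismatch with Lemma \ref{l30}: because $\al^{m_i}$ is not itself the fundamental lattice of a based Hermitian space, one cannot quote Lemma \ref{l30} as a black box, and must re-run its short proof after embedding $\al^{m_i}$ primitively in $\BZ[A_\Gamma]^{m_i}$.
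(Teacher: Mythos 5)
Your proof is correct and follows essentially the same strategy as the paper: present $M$, tensor with $\al$, bound $|\Tor_\BZ(M\otimes\al)|$ by an operator-norm estimate in the spirit of Lemma~\ref{l30}, and then invoke Lemma~\ref{l5}(d) and (e). You are also right that Lemma~\ref{l30} as stated requires the $\Theta_i$ to be maximal-rank sublattices of the fundamental lattices, a hypothesis that $\al^{m_i}\subset\BZ[A_\Gamma]^{m_i}$ does not satisfy; the paper invokes Lemma~\ref{l30} without comment, and your re-derivation using primitivity of $\al$ to reconcile the two notions of saturation is exactly the fix that makes this step rigorous.
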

\begin{proof} Tensoring the presentation

$$
 \cR^{m_1} \overset{\mathcal \partial_1}{\longrightarrow} \cR^{m_0} \to M \to 0.
 $$
 with $\BZ[A]$ and $\al$ respectively, one gets

$$
 (\BZ[A_\Gamma])^{m_1} \overset{\partial_{1,\Gamma}}{\longrightarrow} (\BZ[A_\Gamma])^{m_0} \to M\otimes \BZ[A_\Gamma]\to 0,
 $$
\be  \al ^{m_1} \overset{\partial_{1,\al}}{\longrightarrow} \al ^{m_0} \to M\otimes \al \to 0,
\label{e32}
\ee
with $\partial_{1,\al}$ the restriction of  $\partial_{1,\Gamma}$.

Recall that we have a Hermitian structure on  $\BC[A]$.
It is not difficult to find an upper bound, not depending on $\Gamma$, for all the operator $\partial_{1,\Gamma}$. In fact, by \cite[Lemma 2.5]{Luck},

$$ ||\cO_\Gamma || \le D := m_1 m_0 \max_{i,j}\{ |\cO_{ij}|_1 \},$$
where for a Laurent polynomial $a \in \BZ[t_1^{\pm 1}, \dots, t_n^{\pm 1}]$ the norm $|a|_1$ is the sum of the absolute values of its coefficients.

 Because $\al_\BC$ is an invariant subspace $\BC[A]$,  we also have
$$ ||\cO_\al|| \le D.$$

Applying Lemma \ref{l30} to the sequence \eqref{e32} we get
$$ |\Tor_\BZ( M \otimes \al)|\le    D ^{\rk_\BZ \al}\, \vol(\al).$$
The right hand side has negligible growth, by Lemma \ref{l5}(d) and (e).
\end{proof}

\subsection{Proof of Proposition \ref{mainx} part (i)}

\begin{proof} Recall that $A= A_\Gamma:= \BZ^n/\Gamma $. We have an exact sequence
\be
 0 \to (\al \oplus \beta) \to \BZ[A] \to Q \to 0,
 \label{e25}
 \ee
with $Q=|Q(p;\Gamma)|\sim 1$ by Lemma \ref{l5}a.  Tensoring \eqref{e25} with $M$,
\be
 \dots\to  {\operatorname{Tor}}_1^{\cR} (M,Q) \to \big ((M\otimes \al) \oplus (M\otimes  \beta)\big) \to M\otimes \BZ[A] \to {\operatorname{Tor}}_0^{\cR} (M,Q) \to 0.
 \label{e251}
 \ee
 Lemma \ref{l7} shows that $|{\operatorname{Tor}}_i^{\cR} (M,Q)|< |Q|^{m_i}$ for some constant $m_i$ depending on $M$ only. Since $Q \sim 1$, we also have
 \be
 |{\operatorname{Tor}}_i^{\cR} (M,Q)|\sim 1. \notag
 \ee
 Applying Lemma \ref{l8} to the sequence \eqref{e251}, we get

$$ |\Tor_\BZ(M\otimes \al)  \oplus \Tor_\BZ(M\otimes  \beta)|  \sim  |\Tor_\BZ (M\otimes \BZ[A])|. $$

Since $|\Tor_\BZ(M\otimes \al)| \sim 1$ by Lemma \ref{l9}, we have  $|\Tor_\BZ(M\otimes  \beta)|  \sim  |\Tor_\BZ (M\otimes \BZ[A])|$.
\end{proof}

\subsection{The intermediate ideal $J=(p,q)$}
To prepare for the proof of Proposition part (ii), we first study the ideal $J=(p,q)$, where $q\in \cR$ is co-prime with $p$. The reason is $\cR/J$ has a simple free resolution, and hence ${\operatorname{Tor}}_i^{\cR}\big (\cR/J, \beta(p;\Gamma)\big )$ is easy to study.

\begin{lemma} Both modules ${\operatorname{Tor}}_1^{\cR}\big (\cR/J, \beta(p;\Gamma)\big )$ and ${\operatorname{Tor}}_0^{\cR}\big (\cR/J, \beta(p;\Gamma)\big )$ are finite, have the same cardinality, and
have negligible growth, i.e.
\be
\left |{\operatorname{Tor}}_1^{\cR}\big (\cR/J, \beta(p;\Gamma)\big )\right| = \left|{\operatorname{Tor}}_0^{\cR}\big (\cR/J, \beta(p;\Gamma)\big )\right| \sim 1.
\label{e95}
\ee
\label{l93}
\end{lemma}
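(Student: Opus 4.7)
Since $\cR$ is a UFD and $p,q$ are coprime, $(p,q)$ forms a regular sequence in $\cR$, so the Koszul complex
\[
0 \to \cR \xrightarrow{\partial_2} \cR^2 \xrightarrow{\partial_1} \cR \to \cR/J \to 0,
\]
with $\partial_2(a)=(-qa,pa)$ and $\partial_1(a,b)=pa+qb$, is a free resolution of $\cR/J$. Tensoring over $\cR$ with $\beta=\beta(p;\Gamma)$ produces the complex
\[
\cC:\quad 0 \to \beta \xrightarrow{\partial_2} \beta\oplus\beta \xrightarrow{\partial_1} \beta \to 0,
\]
whose homology groups are precisely ${\operatorname{Tor}}_i^\cR(\cR/J,\beta)$ for $i=0,1$. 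Lemma \ref{l5}(b) guarantees that multiplication by $p$ on $\beta$ is injective, so Lemma \ref{l90} applied with $\fB=\beta$ immediately yields that $H_0(\cC)$ and $H_1(\cC)$ are finite and have equal cardinality.

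For the negligible growth, the crucial observation is that $\cR/J$ is pseudo-zero. Indeed it is a torsion $\cR$-module (annihilated by the nonzero $p$), and by the formula $\Delta_0(\cR/(f_1,\dots,f_l))=\gcd(f_1,\dots,f_l)$ recalled in Section \ref{not} we have $\Delta_0(\cR/J)=\gcd(p,q)=1$; the characterization of pseudo-zero torsion modules by $\Delta_0=1$ then applies. Proposition \ref{pT3ps}, which imports the Schmidt torsion-module case of Theorem \ref{T3}, therefore gives
\[
|\Tor_\BZ((\cR/J)\otimes \BZ[A_\Gamma])|\sim 1,
\]
and Proposition \ref{mainx}(i), already established in the preceding subsection, lets us swap $\BZ[A_\Gamma]$ for $\beta$ on the left without altering the growth rate, so
\[
|\Tor_\BZ((\cR/J)\otimes \beta)|\sim 1.
\]

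Since $H_0(\cC)=(\cR/J)\otimes_\cR\beta$ is finite by the first step, it equals its own $\BZ$-torsion; hence $|H_0(\cC)|\sim 1$, and combining this with $|H_0(\cC)|=|H_1(\cC)|$ finishes the proof. The one conceptual step is to recognize that the coprimality of $(p,q)$ in the UFD $\cR$ makes the quotient $\cR/J$ pseudo-zero, opening the door to the already-established pseudo-zero and approximation results; once this is observed, Lemma \ref{l90} supplies the identity $|H_0|=|H_1|$ for free and no explicit bound on $|\beta/J\beta|$ via operator norms, lattice volumes, or Smith normal forms is required.
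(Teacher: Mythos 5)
Your proof is correct and follows essentially the same route as the paper's: the Koszul resolution of $\cR/J$ from the regular sequence $(p,q)$, tensoring with $\beta$, Lemma \ref{l5}(b) plus Lemma \ref{l90} for finiteness and $|H_0|=|H_1|$, and then Proposition \ref{mainx}(i) together with Proposition \ref{pT3ps} applied to the pseudo-zero module $\cR/J$. The only small difference is that the paper justifies pseudo-zeroness of $\cR/J$ by a quick citation of Lemma \ref{lps} (whose hypothesis is a prime ideal, which $J$ need not be), whereas you invoke directly the characterization $\Delta_0(\cR/J)=\gcd(p,q)=1$ for torsion modules; your route is in fact the cleaner one.
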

\begin{proof} We have the following free resolution of $\cR/J$
\be
 0 \to \cR \overset {d_2} {\longrightarrow} \cR \oplus \cR \overset {d_1} {\longrightarrow} \cR \to \cR/J \to 0,\label{e90}
 \ee
where $d_2(a) = (-q a, pa)$ and $d_1(a,b) = pa + qb$. This can be directly checked easily, or can be deduced from the theory of Koszul complex as follows. Since $p,q$ are co-prime, the sequence $(p,q)$ is a regular
sequence of $\cR$ (see Exercise 5 of page 102 of \cite{Kaplansky}). Hence the Koszul complex of $(p,q)$, which is \eqref{e90}, is a free resolution of $\cR/J$.

From the free resolution \eqref{e90}, ${\operatorname{Tor}}_i^{\cR}(\cR/J, \beta)$ is the $i$-th homology of the complex
\be  0  \to \beta \overset {(-q,p)} {\longrightarrow} \beta \oplus \beta \overset {\begin{pmatrix}p \\ q  \end{pmatrix}} {\longrightarrow} \beta \to 0.
\ee
The module $\beta$ is a free $\BZ$-module of finite rank, and  the map $p: \beta \to \beta$ is injective,  by Lemma \ref{l5}(b). From Lemma \ref{l90} we see that both ${\operatorname{Tor}}_1^{\cR}(\cR/J, \beta)$ and ${\operatorname{Tor}}_0^{\cR}(\cR/J, \beta)=  (\cR/J) \otimes \beta$ are finite, and
$$ |{\operatorname{Tor}}_1^{\cR}(\cR/J, \beta)| = | \cR/J \otimes \beta|.$$
By Lemma \ref{lps}, $\cR/J$ is pseudo-zero since $p$ and $q$ are co-prime. We have
\begin{align*}
| (\cR/J) \otimes \beta| & \sim | (\cR/J) \otimes \BZ[A_\Gamma]| 
&\text{by Proposition \ref{mainx}(i)}\\
&\sim 1  
& \text{by Proposition \ref{pT3ps} }.
\end{align*}

This completes the proof of the lemma.
\end{proof}

\subsection{Complexification of $\Tor$ modules} Recall that $\cR_\BC = \BC[t_1^{\pm 1}, \dots, t_n^{\pm 1}]$. Observe that
 $$ \cR_\BC  \cong \cR \otimes_\BZ \BC, \quad \text{and} \quad \cR_\BC  \cong \cR \otimes_\cR \cR_\BC.$$

 Let $I_\BC$ be the $\BC$-span of $I$ in $\cR_\BC$. Then $I_\BC$ is also the extension of $I$ from $\cR$ to $\cR_\BC$.
\begin{lemma}As $\BZ$-modules, for every $i$,
$$ \big( {\operatorname{Tor}}_i^\cR(\cR/I,\beta) \big) \otimes_\BZ \BC \cong {\operatorname{Tor}}_i^{\cR_\BC}(\cR_\BC/I_\BC,\beta_\BC).$$

\label{l200}
\end{lemma}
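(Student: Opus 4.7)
The plan is to prove this by a standard flat base change argument, using that $\BC$ is flat over $\BZ$ and therefore $\cR_\BC = \cR\otimes_\BZ\BC$ is flat over $\cR$.

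First I would pick a free resolution $F_\bullet\to \cR/I$ by finitely generated free $\cR$-modules, so that by definition
\[
{\operatorname{Tor}}_i^\cR(\cR/I,\beta) = H_i(F_\bullet\otimes_\cR \beta).
\]
Since $\BC$ is flat (indeed torsion-free) over $\BZ$, tensoring with $\BC$ is exact and commutes with taking homology, giving
\[
H_i(F_\bullet\otimes_\cR \beta)\otimes_\BZ\BC \;\cong\; H_i\bigl((F_\bullet\otimes_\cR \beta)\otimes_\BZ\BC\bigr).
\]

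Next I would identify the complex on the right with the one that computes ${\operatorname{Tor}}_i^{\cR_\BC}(\cR_\BC/I_\BC,\beta_\BC)$. For any free $\cR$-module $\cR^m$ one has a canonical isomorphism
\[
(\cR^m\otimes_\cR \beta)\otimes_\BZ\BC \;\cong\; \cR^m\otimes_\cR \beta_\BC \;\cong\; (\cR^m\otimes_\cR\cR_\BC)\otimes_{\cR_\BC}\beta_\BC \;\cong\; \cR_\BC^m\otimes_{\cR_\BC}\beta_\BC,
\]
naturally in $\cR^m$. Applying this termwise to $F_\bullet$ identifies the complex with $(F_\bullet\otimes_\cR \cR_\BC)\otimes_{\cR_\BC}\beta_\BC$.

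Then I would observe that $F_\bullet\otimes_\cR \cR_\BC$ is a resolution of $\cR_\BC/I_\BC$ by finitely generated free $\cR_\BC$-modules: freeness is clear, and exactness is preserved because $\cR_\BC$ is flat over $\cR$ (being $\cR\otimes_\BZ\BC$ with $\BC$ flat over $\BZ$), while the $0$-th term yields $(\cR/I)\otimes_\cR\cR_\BC = \cR_\BC/I_\BC$. Consequently
\[
H_i\bigl((F_\bullet\otimes_\cR\cR_\BC)\otimes_{\cR_\BC}\beta_\BC\bigr) \;=\; {\operatorname{Tor}}_i^{\cR_\BC}(\cR_\BC/I_\BC,\beta_\BC),
\]
which combined with the previous identifications yields the claimed isomorphism. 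There is no real obstacle here; the only thing to be careful about is keeping the two tensor products ($\otimes_\cR$ and $\otimes_\BZ$) straight and noting that flatness of $\BC$ over $\BZ$ both allows homology to commute with $-\otimes_\BZ\BC$ and supplies flatness of $\cR_\BC$ over $\cR$.
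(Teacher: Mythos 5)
Your argument is correct and is essentially the paper's own flat base change argument. The only (immaterial) difference is the choice of which variable of $\operatorname{Tor}$ to resolve: you take a free resolution $F_\bullet \to \cR/I$, whereas the paper takes a free resolution $\cC \to \beta$ and then tensors that resolution with $\cR/I$. Both proofs rest on exactly the same two flatness facts — $\BC$ flat over $\BZ$ (so $-\otimes_\BZ\BC$ commutes with homology) and $\cR_\BC$ flat over $\cR$ (so the base-changed complex still resolves and still computes $\operatorname{Tor}$ over $\cR_\BC$) — and both use the naturality of the termwise identification of the two complexes. Either variable works here because $\operatorname{Tor}$ is balanced; there is nothing gained or lost by the choice.
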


\begin{proof}

Since  $\BC$ is flat over $\BZ$, we have
$(\cR/I) \otimes_\BZ \BC \cong \cR_\BC / I_\BC$.

Since $\cR_\BC$ is flat over $\cR$, we have $   (\cR/I)\otimes_\cR \cR_\BC  \cong \cR_\BC / I_\BC$. It follows that

\be
(\cR/I) \otimes_\BZ \BC \cong (\cR/I)\otimes_\cR \cR_\BC.
\label{e102}
\ee

Suppose $\cC \to \beta$ is a free resolution of $\beta$. By definition,
\be  {\operatorname{Tor}}_i^\cR(\cR/I, \beta) = H_i(\cC\otimes_\cR (\cR/I)).
\label{e103}
\ee
Tensoring \eqref{e103} with $\cR_\BC$, a flat $\cR$-module, we get

\be  {\operatorname{Tor}}_i^{\cR_\BC}(\cR_\BC/I_\BC,\beta_\BC) = H_i(\cC\otimes_\cR (\cR/I) \otimes_\cR \cR_\BC).
\label{e104}
\ee

Tensoring \eqref{e103} over $\BZ$ with  $\BC$, a flat $\BZ$-module, we get

\be \big( {\operatorname{Tor}}_i^\cR(\cR/I,\beta) \big) \otimes_\BZ \BC \cong H_i\Big( \big (\cC\otimes_\cR (\cR/I)\big) \otimes_\BZ \BC\Big ).
\label{e105}
\ee
Since $\cC$ is free, each term of $\cC$ is a direct of several $\cR$. It follows from \eqref{e102} that the right hand sides of \eqref{e104} and \eqref{e105} are
isomorphic as $\BZ$-modules, whence the lemma.
\end{proof}

\subsection{Proof of Proposition \ref{mainx} (ii)}

\begin{proof}

 a) The case $i=0$. Recall that ${\operatorname{Tor}}_0^{\cR}(\cR/I, \beta(p;\Gamma)) = (\cR/I) \otimes \beta$.

 Since $(p) \subset I$, we have a natural surjection
 $  \cR/(p)  \twoheadrightarrow \cR/I$.
 Tensoring with $\beta= \beta(p;\Gamma)$ we get a surjective map
  $$ \beta \otimes \big( \cR/(p)\big)   \twoheadrightarrow \beta \otimes (\cR/I).$$
  Now  $\beta \otimes \big( \cR/(p)\big) \cong \beta/p$, which is finite since $p$ acts on the finite-rank  free abelian group $\beta$ by an injection,
  see Lemma \ref{l5}(b). It follows that $\beta \otimes \cR/I$ is finite.

Since $\cR/I$ is pseudo-zero,  by Proposition \ref{mainx}(i) and  Proposition \ref{pT3ps},
$$|\beta \otimes_\cR (\cR/I)| \sim  |\BZ[A_\Gamma] \otimes_\cR (\cR/I)| \sim 1.$$

b) The case $i=1$.
First we show that ${\operatorname{Tor}}_1^{\cR}(\beta, \cR/I)$ is finite.

 By Lemma \ref{l5}(a), the $\cR_\BC$-ideal generated by $\tilde\al_\BC$ and $p$ is $\cR_\BC$,
hence $\tal_\BC + I_\BC = \cR_\BC$ because $ p \in I$. It is well-known  then (see eg.  \cite[Chapter 1]{AM})
 \be
 \tal_\BC \cap I_\BC =\tal_\BC \, I_\BC.
 \label{e290}
 \ee

 For two ideals $I_1, I_2$ in a commutative ring $R$, it is known that ${\operatorname{Tor}}_1^R(R/I_1,R/I_2) \cong I_1\cap I_2/I_1I_2$. Hence from \eqref{e290} we have
$${\operatorname{Tor}}_1^{\cR_\BC}(\cR_\BC/\tal_\BC, \cR_\BC/I_\BC)=0.$$

Since $\beta_\BC = \cR_\BC/\tal_\BC$, this can be rewritten as $$  {\operatorname{Tor}}_1^{\cR_\BC}(\beta_\BC, \cR_\BC/I_\BC) = 0,$$
which,
by Lemma \ref{l200}, implies that
\be
\big(  {\operatorname{Tor}}_1^{\cR}(\beta, \cR/I)\big)  \otimes _\BZ \BC =0. \label{e300}
\ee

Since ${\operatorname{Tor}}_1^{\cR}(\beta, \cR/I)$ is a finitely generated abelian group, \eqref{e300} is equivalent to the fact
${\operatorname{Tor}}_1^{\cR}(\beta, \cR/I)$ is finite.

Now we show that $|{\operatorname{Tor}}_1^{\cR}(\beta, \cR/I) |\sim 1$. Since $\cR/I$ is pseudo-zero,  $I \neq (p)$. This means there is $q \in I$ such that $q$ is not divisible by $p$. Since $p$ is irreducible,
$p$ and $q$ are co-prime. Let $J =(p,q)$. Then $(p) \subset J \subset I$.

Tensoring $\beta$ with the exact sequence
$$ 0 \to I/J \to \cR/J \to \cR/I \to 0$$ we
get the exact sequence
\be
 \dots \to {\operatorname{Tor}}_1^{\cR}(\cR/J, \beta) \to {\operatorname{Tor}}_1^{\cR}(\cR/I, \beta) \to \big( (I/J) \otimes \beta\big)\to \dots
 \label{e110}
 \ee
 The module $I/J$, being a submodule of the pseudo-zero module $\cR/J$, is also pseudo-zero. Hence by Proposition \ref{mainx}(i) and Proposition \ref{pT3ps},
 \be |\Tor_\BZ \big( (I/J) \otimes \beta\big)| \sim 1.
 \ee
 By Lemma \ref{l93}, ${\operatorname{Tor}}_1^{\cR}(\cR/J, \beta)$ is finite and has negligible growth,

 \be
  |{\operatorname{Tor}}_1^{\cR}(\cR/J, \beta)| \sim 1. \label{e1101}
 \ee

  The middle term of \eqref{e110}, being finite, must satisfy

 $$ |{\operatorname{Tor}}_1^{\cR}(\cR/I, \beta) | \le | {\operatorname{Tor}}_1^{\cR}(\cR/J, \beta) | \, |\Tor_\BZ \big( (I/J) \otimes \beta\big)|$$
 and hence by \eqref{e110} and \eqref{e1101} is negligible,  $ |{\operatorname{Tor}}_1^{\cR}(\cR/I, \beta)| \sim 1$.
\end{proof}

\section{Proof of Theorems \ref{T4}, \ref{T3}, and \ref{T33}}\label{S3}
\subsection{Pseudo-zero kernel}

\begin{lemma} Suppose $M_1, M_2$ are finitely-generated $\cR$-module, $I \subset \cR$ is a prime ideal such that $\cR/I$ is pseudo-zero, and
\be  0 \to \cR/I \to M_1 \to M_2\to 0 \label{e4}
\ee
is exact. Then $M_1 \sim M_2$.
\label{l906}
\end{lemma}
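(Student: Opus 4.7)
The plan is to reduce the statement from tensoring with $\BZ[A_\Gamma]$ to tensoring with the auxiliary ideal $\beta(p;\Gamma)$ constructed in Section \ref{approx}, and then exploit the vanishing/negligibility of the Tor terms against $\beta(p;\Gamma)$ guaranteed by Proposition \ref{mainx}(ii).

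First I would pick a nonzero $p \in I$; such a $p$ exists because $\cR/I$ pseudo-zero forces $I \neq 0$ (the zero ideal is prime but $\cR$ is not pseudo-zero). Since $I$ is prime and $\cR/I$ is pseudo-zero, Lemma \ref{lps}(a) tells us that $I$ is not principal, so everything needed for Proposition \ref{mainx}(ii) applies to this $p$ and $I$. Write $\beta = \beta(p;\Gamma)$ throughout.

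Next I would tensor the given short exact sequence over $\cR$ with $\beta$. The associated long exact sequence for Tor, truncated at the relevant level, reads
\[
{\operatorname{Tor}}^\cR_1(M_2, \beta) \to (\cR/I)\otimes \beta \to M_1 \otimes \beta \to M_2 \otimes \beta \to 0.
\]
Let $N_1$ denote the image of $(\cR/I)\otimes\beta$ in $M_1\otimes\beta$. Then $N_1$ is a quotient of $(\cR/I)\otimes\beta = {\operatorname{Tor}}^\cR_0(\cR/I,\beta)$, which by Proposition \ref{mainx}(ii) is finite with $|(\cR/I)\otimes\beta| \sim 1$; hence $N_1$ is finite and $|N_1|\sim 1$. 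So we obtain a short exact sequence
\[
0 \to N_1 \to M_1 \otimes \beta \to M_2 \otimes \beta \to 0
\]
with $|N_1|\sim 1$. Applying Lemma \ref{l8} (with $N_2 = 0$) yields $|\Tor_\BZ(M_1\otimes\beta)| \sim |\Tor_\BZ(M_2\otimes\beta)|$.

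Finally I would invoke Proposition \ref{mainx}(i) for both $M_1$ and $M_2$: it gives $|\Tor_\BZ(M_i\otimes\BZ[A_\Gamma])| \sim |\Tor_\BZ(M_i\otimes\beta)|$ for $i=1,2$, and combining with the previous equivalence concludes that $M_1 \sim M_2$. The only subtle point is ensuring that the Tor exact sequence can be truncated to a short exact sequence whose kernel has negligible growth — this is precisely what Proposition \ref{mainx}(ii) was designed to provide, so there is no real obstacle once the machinery of Section \ref{approx} is in place. Notice that the ${\operatorname{Tor}}^\cR_1(\cR/I,\beta)$ term (the main input to showing $|N_1|\sim 1$ via the preceding term in the long exact sequence) is not even needed here since we only need $|N_1|\le |(\cR/I)\otimes\beta|$; however, the $i=1$ part of Proposition \ref{mainx}(ii) will be essential in the more general pseudo-isomorphism argument (Theorem \ref{T4}) for which this lemma is a stepping stone.
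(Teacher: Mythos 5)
Your proof is correct and follows essentially the same route as the paper: choose a nonzero $p\in I$, tensor the short exact sequence with $\beta=\beta(p;\Gamma)$, use Proposition \ref{mainx}(ii) to show the kernel term $(\cR/I)\otimes\beta$ is finite of negligible size, apply Lemma \ref{l8}, and then pass back to $\BZ[A_\Gamma]$ via Proposition \ref{mainx}(i). The only difference is cosmetic: you explicitly truncate to a short exact sequence by replacing $(\cR/I)\otimes\beta$ with its image $N_1$, whereas the paper applies Lemma \ref{l8} directly to the four-term sequence (that lemma's own proof performs exactly this truncation, so nothing is gained or lost).
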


\begin{proof} Choose a non-zero irreducible $p \in I$ and let $\beta= \beta(p;\Gamma)$. Tensoring \eqref{e4} with $\beta$, we get the exact sequence

\be
 \dots \to  (\cR/I)\otimes \beta \to M_1 \otimes \beta  \to M_2 \otimes \beta \to 0.
 \label{e70}
 \ee

 By Proposition \ref{mainx}(ii), $(\cR/I)\otimes \beta $ is finite and
 $|(\cR/I)\otimes \beta | \sim 1$.

 Applying Lemma \ref{l8} to the sequence \eqref{e70}, we get
 $$ |\Tor_\BZ(M_1 \otimes \beta ) |  \sim |\Tor_\BZ( M_2 \otimes \beta ) |.$$
 By Proposition \ref{mainx}(i), $|\Tor_\BZ(M_i \otimes \beta ) | \sim |\Tor_\BZ(M_i \otimes \BZ[A_\Gamma] ) |$. Hence we can conclude that
 $$ |\Tor_\BZ(M_1\otimes \BZ[A_\Gamma]| \sim |\Tor_\BZ(M_2\otimes \BZ[A_\Gamma]|.$$
This means $M_1 \sim M_2$.
\end{proof}

\begin{lemma} Suppose $N, M_1$ and $M_2$ are finitely generated $\cR$-modules, and $N$ is pseudo-zero.
If
\be  0 \to N \to M_1 \to M_2\to 0 \label{e41}
\ee
 is exact, then $M_1 \sim M_2$.
 \label{l908}
\end{lemma}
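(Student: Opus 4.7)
The plan is to reduce to the previous lemma by building a prime filtration of $N$. Since $\cR$ is Noetherian and $N$ is finitely generated, standard commutative algebra (see e.g.\ Bourbaki or Matsumura) gives a filtration
\[
 0 = N_0 \subset N_1 \subset \cdots \subset N_k = N
\]
by $\cR$-submodules such that each successive quotient $N_i/N_{i-1}$ is isomorphic to $\cR/\mathfrak{p}_i$ for some prime ideal $\mathfrak{p}_i \subset \cR$. This is the key structural input.

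Because submodules and quotients of pseudo-zero modules are pseudo-zero (as recalled in the paper), each $N_i$ and each subquotient $N_i/N_{i-1} \cong \cR/\mathfrak{p}_i$ is pseudo-zero. In particular each $\mathfrak{p}_i$ is a prime ideal with $\cR/\mathfrak{p}_i$ pseudo-zero, so Lemma \ref{l906} applies to each $\mathfrak{p}_i$.

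Next I would argue by induction on $i$. Identify $N_i$ as a submodule of $M_1$ via the inclusion $N \hookrightarrow M_1$, and consider the quotient $M_1/N_i$. For each $i \ge 1$ we obtain a short exact sequence
\[
 0 \to N_i/N_{i-1} \to M_1/N_{i-1} \to M_1/N_i \to 0,
\]
in which the left term is $\cR/\mathfrak{p}_i$, pseudo-zero. Lemma \ref{l906} then gives $M_1/N_{i-1} \sim M_1/N_i$. Since the relation $\sim$ is clearly transitive (it is defined by a limit condition on logarithmic growth rates), chaining these equivalences for $i = 1, \dots, k$ yields
\[
 M_1 = M_1/N_0 \sim M_1/N_1 \sim \cdots \sim M_1/N_k = M_1/N \cong M_2,
\]
which is what we want.

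The only nontrivial step is invoking the prime filtration theorem; everything else is routine. I do not expect any genuine obstacle, since the length $k$ of the filtration is finite and does not depend on $\Gamma$, so the accumulated ``errors'' in the $\sim$ relation remain negligible.
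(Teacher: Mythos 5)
Your proof is correct and follows essentially the same route as the paper: both arguments take a prime filtration (composition series) of the pseudo-zero module $N$ with successive quotients $\cR/\mathfrak{p}_i$, observe these are pseudo-zero, and reduce stepwise to Lemma~\ref{l906} via the short exact sequences $0 \to N_i/N_{i-1} \to M_1/N_{i-1} \to M_1/N_i \to 0$, using transitivity of $\sim$ (which the paper packages as an induction on the length of the filtration).
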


\begin{proof}
It is well-known that
there is a composition series
\be N = N_s \supset N_{s-1} \supset \dots \supset N_1 \supset N_0=0
\label{e71}
\ee
 such that for each $i$,  $N_{i+1}/N_i \cong
\cR/I_i$ for some prime ideal $I_i$, see eg. \cite[Theorem IV.4.1]{Bourbaki}.  We use induction on $s$. The case $s=1$ has been proved, see Lemma \ref{l906}.

Let $M'_1= M_1/N_{s-1}$ and $N':= N/N_{s-1}\cong \cR/I$, with $I=I_{s-1}$. From \eqref{e41} we have
\begin{align}
 0  & \to N'  \to M_1' \to  M_2  \to  0.
 \label{eq.41a}
 \end{align}

From $M'_1= M_1/N_{s-1}$, we have
\begin{align}
0  & \to N_{s-1} \to M_1  \to  M_1'  \to  0. \label{eq.41b}
\end{align}
 
 Note that $N'$ and $N_{s-1}$, being either a quotient or a submodule  of the pseudo-zero module $N$, are pseudo-zero.
By induction and the case $s=1$,  from the exact  sequences \eqref{eq.41a} and \eqref{eq.41b}, we have

$$ M_1' \sim M_2, \quad
M_1 \sim M_1'.$$

Hence $ M_1 \sim M_2$.
\end{proof}

\subsection{Pseudo-zero quotient}
\begin{lemma} Suppose $N, M_1$ and $M_2$ are finitely generated $\cR$-modules, and $N$ is pseudo-zero.
 If
\be  0 \to M_1 \to M_2 \to N\to 0 \label{e5}
\ee is exact, then  $M_1 \sim M_2$.
\label{mainn}

\end{lemma}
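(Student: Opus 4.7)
The plan is to mirror the proof of Lemma \ref{l908}, only with the pseudo-zero module appearing on the right end of the short exact sequence instead of the left. The overall structure is a two-step procedure: first reduce via a composition series to the case where $N$ is of the form $\cR/I$ with $I$ a prime ideal (and $\cR/I$ pseudo-zero), and then settle this base case by tensoring with a good approximation $\beta(p;\Gamma)$ provided by Proposition \ref{mainx}.

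For the base case, suppose $N \cong \cR/I$ with $I \neq \cR$ prime and $\cR/I$ pseudo-zero. By Lemma \ref{lps}(a), $I$ is not principal, so in particular it contains some non-zero $p\in\cR$. Form $\beta=\beta(p;\Gamma)$, and tensor the short exact sequence $0 \to M_1 \to M_2 \to \cR/I \to 0$ with $\beta$ to obtain
\[
{\operatorname{Tor}}_1^\cR(\cR/I,\beta) \to M_1\otimes \beta \to M_2\otimes\beta \to (\cR/I)\otimes\beta \to 0.
\]
By Proposition \ref{mainx}(ii) both outer terms ${\operatorname{Tor}}_i^\cR(\cR/I,\beta)$ for $i=0,1$ are finite with negligible growth. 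Lemma \ref{l8} then yields $|\Tor_\BZ(M_1\otimes\beta)|\sim|\Tor_\BZ(M_2\otimes\beta)|$, and finally Proposition \ref{mainx}(i) upgrades this to $|\Tor_\BZ(M_1\otimes\BZ[A_\Gamma])|\sim|\Tor_\BZ(M_2\otimes\BZ[A_\Gamma])|$, i.e.\ $M_1\sim M_2$.

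For the reduction step, pick a composition series $0=N_0 \subset N_1 \subset \dots \subset N_s = N$ with $N_i/N_{i-1} \cong \cR/I_i$ for prime ideals $I_i$ (available by \cite[Thm.~IV.4.1]{Bourbaki}, as cited in the proof of Lemma \ref{l908}); since submodules and quotients of pseudo-zero modules are pseudo-zero, each $\cR/I_i$ is pseudo-zero. Induct on $s$. For the inductive step, let $M_2'\subset M_2$ be the preimage of $N_{s-1}$ under $M_2\twoheadrightarrow N$. Then we have two exact sequences
\[
0 \to M_2' \to M_2 \to \cR/I_s \to 0, \qquad 0\to M_1\to M_2'\to N_{s-1}\to 0.
\]
The first gives $M_2'\sim M_2$ by the base case, and the second gives $M_1\sim M_2'$ by the induction hypothesis (since $N_{s-1}$ is pseudo-zero with shorter composition series). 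Transitivity of $\sim$ yields $M_1\sim M_2$.

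The only potential obstacle is verifying that the sign of the extension (pseudo-zero appearing on the \emph{right}) still allows the $\Tor$-control to pass through Lemma \ref{l8}; this is precisely why Proposition \ref{mainx}(ii) needs both the $i=0$ and $i=1$ vanishing-up-to-$\sim 1$ assertions, whereas Lemma \ref{l908} only used the $i=0$ side. Since Proposition \ref{mainx}(ii) has already delivered both, the argument goes through cleanly, and no new technical input beyond what is used for Lemmas \ref{l906}--\ref{l908} is needed.
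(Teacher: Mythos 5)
Your proposal is correct and follows essentially the same route as the paper: reduce via a composition series to the base case $N\cong\cR/I$ with $\cR/I$ pseudo-zero, pick a non-zero $p\in I$, tensor with $\beta(p;\Gamma)$, control both outer terms of the resulting four-term exact sequence via Proposition~\ref{mainx}(ii), apply Lemma~\ref{l8}, and then transfer back to $\BZ[A_\Gamma]$ via Proposition~\ref{mainx}(i). The only cosmetic difference is that you spell out the induction step that the paper compresses into a reference to Lemma~\ref{l908}.
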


\begin{proof}
  Again using induction on the length  of the composition series \eqref{e71} like in the proof of Lemma \ref{l908} we can assume that $N =\cR/I$, where $I\neq \cR$ is a prime module.
Choose a non-zero irreducible $p \in I$ and let $\beta= \beta(p;\Gamma)$.

 Tensoring \eqref{e5} with $\beta$, we have

\be
 \dots \to {\operatorname{Tor}}_1^{\cR}(\cR/I, \beta )\to    M_1 \otimes \beta  \to M_2 \otimes \beta \to {\operatorname{Tor}}_0^{\cR}(\cR/I, \beta )  \to 0.
 \label{e711}
 \ee
By Proposition \ref{mainx}(ii), ${\operatorname{Tor}}_i^{\cR}(\cR/I, \beta )$ is finite and
 $|{\operatorname{Tor}}_i^{\cR}(\cR/J, \beta )| \sim 1$ for $i=0,1$.

 Applying Lemma \ref{l8} to the sequence \eqref{e71}, we get
 $$ |\Tor_\BZ(M_1 \otimes \beta ) |  \sim |\Tor_\BZ( M_2 \otimes \beta ) |.$$
Using Proposition \ref{mainx}(i), we get
 $$ |\Tor_\BZ(M_1\otimes \BZ[A_\Gamma]| \sim |\Tor_\BZ(M_2\otimes \BZ[A_\Gamma]|,$$
 which means $M_1 \sim M_2$.
\end{proof}
\subsection{Proof of Theorem \ref{T4}}
\begin{proof}  Since $M_1$ and $M_2$ are pseudo-isomorphic, there are pseudo-zero $N_1$ and $N_2$ such that
$$ 0 \to N_1 \to M_1 \to M_2 \to N_2 \to 0$$
is exact. Then we have the following exact sequences
\be 0 \to M_1/N_1 \to M_2 \to N_2 \to 0 \label{e81}
\ee
\be 0 \to N_1 \to M_1 \to M_1/N_1 \to 0 \label{e82}
\ee
From \eqref{e81} and Lemma \ref{mainn} we have $M_1/N_1 \sim M_2$, while  from \eqref{e82} and Lemma \ref{l908}  we have
$ M_1/N_1 \sim M_1$. It follows that $M_1 \sim M_2$,  which is equivalent to the statement of Theorem \ref{T4}.
\end{proof}

\def\TOR{{\operatorname{Tor}}}

\subsection{The case when $M$ is torsion-free} \begin{proposition} Suppose $M$ is a torsion-free finitely generated $\cR$-module. Then
$$ \lim_{\la \Gamma \ra \to \infty} \frac{\log | \Tor_\BZ (M\otimes \BZ[\BZ^n/\Gamma])|}{|\BZ^n /\Gamma|} =0.$$
\label{T5}
\end{proposition}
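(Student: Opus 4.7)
My plan is to reduce to the trivial case $M=\cR^r$, where $r=\rk(M)$, by establishing a pseudo-isomorphism $M\to\cR^r$. Once this is in hand, Theorem~\ref{T4} immediately gives
$$|\Tor_\BZ(M\otimes\BZ[A_\Gamma])|\sim|\Tor_\BZ(\cR^r\otimes\BZ[A_\Gamma])|=|\Tor_\BZ(\BZ[A_\Gamma]^r)|=1,$$
so $\log|\Tor_\BZ(M\otimes\BZ[A_\Gamma])|/|\BZ^n/\Gamma|\to 0$, which is the stated conclusion.

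To produce the pseudo-isomorphism $M\sim\cR^r$, I will invoke the structure theory of finitely generated modules over a Krull domain. The ring $\cR=\BZ[t_1^{\pm1},\dots,t_n^{\pm1}]$ is a UFD (a Laurent extension of the UFD $\BZ$), hence a Krull domain whose divisor class group is trivial. The Bourbaki structure theorem~\cite{Bourbaki} says that any finitely generated torsion-free module over a Krull domain is pseudo-isomorphic to a finite direct sum $\bigoplus_{i=1}^r I_i$ of divisorial fractional ideals, with $r$ equal to the rank (there is no torsion summand, since $M$ is torsion-free). Triviality of $\operatorname{Cl}(\cR)$ forces each $I_i$ to be principal, hence $\cR$-isomorphic to $\cR$; composing yields the required pseudo-isomorphism $M\to\cR^r$.

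The main obstacle I anticipate is locating and correctly applying the precise Bourbaki statement; the remaining inputs (triviality of the divisor class group of $\cR$, the identification $\cR^r\otimes\BZ[A_\Gamma]=\BZ[A_\Gamma]^r$, and Theorem~\ref{T4}) are routine. If one prefers a more self-contained route that uses only machinery already developed in the excerpt, I would proceed in two steps: first, the natural map $M\hookrightarrow M^{**}$ is injective with pseudo-zero cokernel for any finitely generated torsion-free module over a Krull domain, so Lemma~\ref{mainn} gives $M\sim M^{**}$; second, for the reflexive module $M^{**}$ of rank $r$ one constructs an embedding $M^{**}\hookrightarrow\cR^r$ whose cokernel is pseudo-zero, exploiting the UFD property to align the rank-$r$ free localizations $M^{**}_{\mathfrak p}\cong\cR_{\mathfrak p}^r$ at every height-one prime $\mathfrak p$, and applies Lemma~\ref{mainn} once more to conclude $M^{**}\sim\cR^r$. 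Either route bottoms out at the UFD hypothesis on $\cR$.
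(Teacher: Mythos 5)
There is a genuine gap. Your argument rests on the claim that every finitely generated torsion-free $\cR$-module of rank $r$ is pseudo-isomorphic to $\cR^r$ (whether via the purported Bourbaki direct-sum decomposition $\bigoplus_{i=1}^r I_i$, or via the reflexive-hull route with an embedding $M^{**}\hookrightarrow\cR^r$ having pseudo-zero cokernel). This is false for $n\geq 2$. First, observe that a pseudo-isomorphism between two reflexive modules over a Krull domain is automatically an isomorphism: a finitely generated reflexive module $N$ satisfies $N=\bigcap_{\operatorname{ht}\mathfrak p=1}N_\mathfrak p$ inside $N\otimes_\cR F(\cR)$, and a pseudo-isomorphism identifies all these height-one localizations (the kernel is torsion-free and pseudo-zero, hence zero; the cokernel is pseudo-zero, so the intersections agree). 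So your claim would force every finitely generated reflexive $\cR$-module to be free. But already over $\cR=\BZ[t_1^{\pm1},t_2^{\pm1}]$ (Krull dimension $3$), the second syzygy $\Omega^2(\cR/\mathfrak m)$ of $\cR/\mathfrak m$ for the maximal ideal $\mathfrak m=(2,t_1-1,t_2-1)$ is a reflexive rank-$2$ module of projective dimension $1$, hence not free: the Koszul resolution on the regular sequence $(2,t_1-1,t_2-1)$ gives $\operatorname{pd}_\cR(\cR/\mathfrak m)=3$, so $\operatorname{pd}(\Omega^2)=1>0$, while second syzygies over a Noetherian normal domain are reflexive. Thus $\Omega^2(\cR/\mathfrak m)$ is not pseudo-isomorphic to $\cR^2$, and neither version of the structure theorem you rely on is available. (Your argument does happen to work for $n=1$, where $\cR$ has Krull dimension $2$, reflexive implies locally free, and projective modules over $\BZ[t^{\pm1}]$ are free; but the proposition must cover all $n$.)

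The paper's proof sidesteps this precisely by \emph{not} demanding a pseudo-isomorphism. It uses only the elementary fact that a finitely generated torsion-free $M$ of rank $r$, being a lattice in $M\otimes_\cR F(\cR)$, embeds into a free module $\cR^r$ of the same rank (clear denominators); the quotient $\cR^r/M$ is torsion but in general \emph{not} pseudo-zero. It then appeals to Lemma~\ref{mainn2}, which says that a submodule of a module with negligible torsion growth again has negligible torsion growth. The real work is in the proof of Lemma~\ref{mainn2}: it inducts on composition factors of the quotient, and the crucial case is a factor $\cR/(p)$ with $p\neq 0$, handled via the approximation $\beta(p;\Gamma)$ of $\BZ[A_\Gamma]$ and the vanishing ${\operatorname{Tor}}_1^\cR\big(\cR/(p),\beta(p;\Gamma)\big)=0$ of Lemma~\ref{l5}(b), which ensures that tensoring with $\beta(p;\Gamma)$ preserves the inclusion $M\hookrightarrow\cR^r$. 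That apparatus is exactly what fills the gap your proposal leaves open.
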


We first prove the following lemma.

\begin{lemma} Suppose $N, M_1$ and $M_2$ are finitely generated $\cR$-modules and

\be  0 \to M_1 \to M_2 \to N\to 0 \label{e5a}
\ee is exact.  If $M_2 \sim 0$,  then  $M_1 \sim 0$.
\label{mainn2}

\end{lemma}

\begin{proof}
  Using induction on the length  of a composition series \eqref{e71} of $N$  we can assume that $N =\cR/I$, where $I\subset \cR$ is a prime ideal.

  If $\cR/I$ is pseudo-zero, then by Lemma \ref{mainn}, $M_1 \sim M_2 \sim 0$.

  We will consider the remaining case, when $\cR/I$ is not pseudo-zero. Then $I$ is principal, $I=(p)$, where $p \in \cR$.

  If $p=0$, then $N=\cR$ is free, and the sequence \eqref{e5a} is split, $M_2 \cong M_1 \oplus \cR$. One clearly has $\Tor_\BZ(M_1\otimes \BZ[A_\Gamma] )= \Tor_\BZ(M_2\otimes \BZ[A_\Gamma] )$, and
  the statement follows.

 Suppose now $p\neq 0$.  Let $\beta = \beta(p;\Gamma)$. Tensoring  \eqref{e5a} with $\beta$,  the following is exact

$$ \dots \to \TOR^\cR_1(\cR/(p),\beta) \to M_1\otimes \beta  \to M_2 \otimes \beta  \to  \cR/(p )\otimes \beta \to  0.$$
By Lemma \ref{l5}(b),  the first term is 0. It follows  that
$M_1\otimes \beta $ is a subgroup of $M_2\otimes \beta $, and hence
$$ | \Tor_\BZ(M_1\otimes \beta )| \le  | \Tor_\BZ(M_2\otimes \beta )| .$$
By Proposition \ref{mainx}(i),
$$ | \Tor_\BZ(M_i\otimes \beta )| \sim | \Tor_\BZ(M_i\otimes \BZ[A_\Gamma] )|,$$
and since $ | \Tor_\BZ(M_2\otimes \BZ[A_\Gamma] )|\sim 1$, we can conclude that $ | \Tor_\BZ(M_1\otimes \BZ[A_\Gamma] )|\sim 1$, or $M_1 \sim 0$.
\end{proof}
\def\cF{\mathcal F}
\begin{proof}[Proof of Proposition \ref{T5}] Since $M$ is torsion free, the canonical map $ M \to V:= M \otimes_\cR \cF$, where $\cF$ is the fractional field of $\cR$, is an embedding. This means $M$ is a lattice of $V$ with respect to $\cR$, and hence there is a free $\cR$-module $F$ such that $M$  embeds into $F$, see \cite[Chepter 7]{Bourbaki}. One has an exact sequence of finitely-generated $\cR$-modules
$$  0 \to M  \to F \to N\to 0.$$
We have $F \sim 0$ since $F$ is a free $\cR$-module. From Lemma \ref{mainn2} we conclude that $M \sim 0$.
\end{proof}

\subsection{Proof of Theorem \ref{T3}}
\begin{proof}
 By Theorem \ref{ps}, $M$ is and $\Tor(M) \oplus M/\Tor(M)$ are pseudo-isomorphic. Hence by Theorem \ref{T4},
$$ M \sim \big ( \Tor(M) \oplus M/\Tor(M) \big)).$$
Since $M/\Tor(M)$ is torsion-free, by Theorem \ref{T5}, $M/\Tor(M) \sim 0$. Hence we have
\be
M \sim \Tor(M),
\ee
The proof is thus reduced to the case when $M$ is a torsion module, which had been proved by K. Schmidt, see \cite[Theorem 21.1]{Schmidt}.
\end{proof}

\subsection{Proof of Theorem \ref{T33}}

\def\tH {\tilde H}
\def\cD{\mathcal D}
\begin{proof}
Suppose $\cD$ is a chain complex of free finitely generated modules over a domain $R$,
$$ \dots \to R^{m_{i+1}} \overset{\partial_{i+1}} {\longrightarrow}  R^{m_{i}} \overset{\partial_{i}} {\longrightarrow}  \dots $$
  For our application either $R=\cR$ or $R=\BZ$.

  In the exact sequence
  $$0\to  (\ker \partial_i /\im \partial_{i+1}) \to (R^{m_i}/\im \partial_{i+1}) \to (R^{m_i}/\ker \partial_i) \to 0$$
 the first module is $H_i(\cD)$, the second $\coker \partial_{i+1}$. Since the third is a torsion free $R$-module, one has

 \be
 \Tor_R (H_i(\cD)) = \Tor_R (\coker \partial_{i+1}).
 \label{e0101}
 \ee

Suppose now $\cC$ is a chain complex of free finitely generated $\cR$-modules of the form

$$   \dots \to \cR^{m_{i+1}} \overset{\partial_{i+1}} {\longrightarrow}  \cR^{m_{i}} \overset{\partial_{i}} {\longrightarrow}  \dots $$

Apply \eqref{e0101} to the above chain complex, we have

\be  \Tor(H_i(\cC)) = \Tor(M),
\label{e0105}
\ee
where $M= \coker \partial_{i+1}$ which has a presentation

\be
 \cR^{m_{i+1}} \overset{\partial_{i+1}} {\longrightarrow}  \cR^{m_{i}} \to M \to 0.
 \label{e0106}
 \ee
Tensoring \eqref{e0106} with $\BZ[A]$,  where $A = \BZ^n/\Gamma$, we get the exact sequence
$$ \BZ[A] ^{m_{i+1}} \overset{\partial_{i+1,\Gamma}} {\longrightarrow}  \BZ[A] ^{m_{i}} \to M \otimes \BZ[A]\to 0,$$
from which it follows that

\be M \otimes \BZ[A] = \coker(\partial_{i+1,\Gamma}).
\label{e0103}
\ee

The complex $\cC\otimes \BZ[A]$ is

\be
  \dots \to \BZ[A] ^{m_{i+1}} \overset{\partial_{i+1,\Gamma}} {\longrightarrow}  \BZ[A] ^{m_{i}} \overset{\partial_{i,\Gamma}} {\longrightarrow}  \dots
  \label{e0102}
  \ee

Apply \eqref{e0101} to the chain complex \eqref{e0102}, considered as complex over $\BZ$, we get

$$  \Tor_\BZ(H_i(\cC \otimes \BZ[A]))  = \Tor_\BZ( \coker \partial_{i+1,\Gamma}).
$$
which, with \eqref{e0103}, gives

\be  \Tor_\BZ(H_i(\cC \otimes \BZ[A])) = \Tor_\BZ (M \otimes \BZ[A]).
\label{e0104}
\ee

Theorem \ref{T3}, with identity \eqref{e0104}, gives
$$ \limsup_{\la \Gamma \ra \to \infty} \frac{ \log |  \Tor_\BZ(H_i(\cC \otimes \BZ[A])) | }{|\BZ^n/\Gamma|}= \BM(\Delta(\Tor(M)),$$
From which and \eqref{e0105} we have
$$ \limsup_{\la \Gamma \ra \to \infty} \frac{ \log |  \Tor_\BZ(H_i(\cC \otimes \BZ[A])) | }{|\BZ^n/\Gamma|}= \BM(\Delta(\Tor(H_i(\cC))),$$
which completes the proof of Theorem \ref{T33}.\end{proof}

\section{Homology of abelian covering} \label{S4}

\subsection{Alexander polynomials of links}

\def\tY{\tilde Y }

Suppose $Z$ is an oriented integral homology 3-sphere, i.e. $H_i(Z,\BZ)\cong H_i(S^3,\BZ)$, and $L\subset Z$ is an oriented link with $n$ ordered components. Let $N(L)$ be a small open tubular neighborhood of $L$ and
$X= Z\setminus N(L)$. By Alexander duality
$H_1(X,\BZ)\cong \BZ^n$, and there is a natural identification of $H_1(X,\BZ)$ with $\BZ^n$ such that $t_i$ corresponds to the meridian of the the $i$-th component of the link. We fix such an identification of $H_1(X,\BZ)$ with $\BZ^n$.

Let $\tX $ be the abelian covering corresponding to the abelianization  $\pi_1(X) \to H_1(X,\BZ) = \BZ^n$. The homology groups $H_i(\tX,\BZ)$ has a structure of $\cR= \BZ[\BZ^n]$ module.
The {\em Alexander polynomials} $\Delta_i(L)$ (or $\Delta_i(L  \subset Z)$), by definition, are the polynomials $\Delta_i(H_1(\tX,\BZ))$. Recall that if $j$ is the smallest index such that
$\Delta_j(H_1(\tX,\BZ))\neq 0$, then one defines $\Delta(H_1(\tX,\BZ))= \Delta_j(H_1(\tX,\BZ))$. We also define $\Delta(L) = \Delta(H_1(\tX,\BZ))$.

Note that $X$ has Euler characteristic 0.
 It is known that $X$ is  homotopic to a finite 2-dimensional CW-complex $Y$, with 1 0-cell, $m+1$ 1-cell $a_1,\dots a_{m+1}$ and $m$ 2-cell $b_1, \dots, b_m$, for some number $m$. Certainly $m \ge n$.
Let $\rho: \pi_1(Y) \to H_1(Y,\BZ)$ be the
standard abelianization map. By choosing an appropriate CW-structure, we can assume further that  $\rho(a_i)=t_i$ for $i=1,\dots, n$.

Let $\tY$ be the abelian covering of $Y$ corresponding to the abelianization $\rho: \pi_1(Y) \to H_1(Y,\BZ) = \BZ^n$. The CW-complex of $\tY$ can be considered as a chain complex over $\cR= \BZ[t^{\pm 1}_1 , \dots, t^{\pm 1}_n]$, and has the form

\be
 0\to \cR^{m} \overset{\partial_{2}} {\longrightarrow}  \cR^{m+1} \overset{\partial_{1}} {\longrightarrow} \cR \to 0  .
 \label{e3030}
 \ee
Here

$$ \partial_1 = \begin{pmatrix} 1- \rho(a_1) \\ 1- \rho(a_2) \\ \dots \\ 1-\rho(a_{m+1}) \end{pmatrix}.$$
and $\partial_2$  is an $m \times (m+1)$-matrix with entries in $\cR$ which can be calculated using Fox derivative.
There is only one 0-cell of $\tY$, denoted by $O$. The lift of $a_i$ beginning at $O$ will be denoted by $\tilde a_i$, $i=1,\dots,m+1$.

\begin{remark} The module $M_2=\coker (\partial _2)$ is known as the Alexander module.  In some text eg \cite{Hillman}, the Alexander polynomials are defined as $\Delta_i(M_2)$, which differ from ours only by a shift of index:
 $\Delta_i(H_1(\tX,\BZ)) = \Delta_{i+1}(M_2)$ since both $H_1(\tX,\BZ)$ and $M_2$ have the same $\cR$-torsion, see \eqref{e0101}. In particular, $\Delta(L) = \Delta(H_1(\tX,\BZ)) = \Delta(M_2)$.
\end{remark}

\def\diag{\operatorname{diag}}
\subsection{Homology of the branched covering} Suppose $\Gamma \subset \BZ^n$ is a subgroup of finite index, and $A = A_\Gamma= \BZ^n/\Gamma$.
Let $X_\Gamma$ and $Y_\Gamma$ be the covering of $X$ and $Y$ respectively corresponding to the epimorphism $\pi_1 \to H_1 \to A$. Then the CW complex of $Y_\Gamma$ is
$\cC(\tY) \otimes_\cR \BZ[A]$:

\be
  0\to \RR ^{m} \overset{\partial_{2,\Gamma}} {\longrightarrow}  \RR^{m+1} \overset{\partial_{1,\Gamma}} {\longrightarrow} \RR \to 0
  \label{e401}
  \ee

The branched covering $X_\Gamma^\br$, by definition, is obtained from $X_\Gamma$ by Dehn fillings as follows. The boundary  of $X$ is the union of $n$ tori, each surrounding a
link component. The boundary of $X_\Gamma$ is also the union of several tori, each is the covering of one of tori in the boundary of $X$. Suppose $T$ is a torus in the boundary of $X_\Gamma$ covering
the $i$-th torus of the boundary of $X$. There is a simple closed curve $C$ on $T$ covering the meridian of the $i$-th torus. To every boundary component $T$ one does the Dehn filling on $T$ that kills the homology class of $C$. The resulting 3-manifold is $X_\Gamma^\br$.

The homology group $H_1(X_\Gamma^\br,\BZ)$ is the the quotient of $H_1(X_\Gamma,\BZ)$ by the relation $C=0$, for all the curves $C$ described in the above Dehn filling operation.
The difficulty with working $H_1(X_\Gamma^\br,\BZ)$ is these relations $C=0$ are local, they cannot be obtained from a global relation in terms of $\cR$-modules.

We now describe a universal $\cR$-module through which the $\BZ$-torsion of $H_1(X_\Gamma^\br,\BZ)$ can be calculated.

Recall that $\partial_2$ in \eqref{e3030} is an $m\times m+1$ matrix. Let $I_{n,m+1}$ be the $n\times (m+1)$ matrix obtained from the identity $n\times n$ matrix by adding $(m+1-n)$ columns of 0, and $T$ be the $n\times n$ diagonal matrix
$$T = \diag(1-t_1, \dots, 1-t_n).$$

\begin{proposition} Let $M$ be the $\cR$-module with the following presentation matrix
$$ \begin{pmatrix} \partial_2 & 0 \\ I_{n,m+1} & T
\end{pmatrix}, $$
which has  size $(m+n) \times (m+1+n)$.
Then for any subgroup $\Gamma\subset \BZ^n$ of finite index we have
$$ \Tor_\BZ \big( H_1(X_\Gamma^\br,\BZ)   \big) \cong \Tor_\BZ \big( M \otimes _\cR \BZ[A_\Gamma]   \big).$$
\label{p604}
\end{proposition}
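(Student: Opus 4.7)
The plan is to realize $M\otimes_\cR\BZ[A_\Gamma]$ as the degree-1 cokernel of an extended chain complex $\mathcal{C}'$ over $\cR$ built from $\cC(\tX)$, and then to relate $\mathcal{C}'':=\mathcal{C}'\otimes_\cR\BZ[A_\Gamma]$ to a chain complex $\mathcal{D}$ of $\BZ[A_\Gamma]$-modules which computes $H_1(X_\Gamma^\br,\BZ)$, in such a way that the cokernel of the comparison has vanishing $H_2$ and $\BZ$-torsion-free $H_1$. Combined with the lemma $\Tor_R(H_i(\mathcal{D}))=\Tor_R(\coker\partial_{i+1})$ that opens Section~\ref{S3}, applied over $R=\BZ$ (since $\BZ[A_\Gamma]$ is $\BZ$-free), this will deliver $\Tor_\BZ H_1(X_\Gamma^\br)\cong \Tor_\BZ(M\otimes\BZ[A_\Gamma])$.

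Concretely, I would extend \eqref{e3030} by adjoining $n$ new 1-cells $c_1,\dots,c_n$ and $n$ new 2-cells $d_1,\dots,d_n$ with $\partial d_i = \tilde a_i + (1-t_i)c_i$; the requirement $\partial_1^\br\partial_2^\br=0$ then forces $\partial c_i = -\tilde O$, so the resulting complex
$$\mathcal{C}':\quad 0\to \cR^{m+n}\xrightarrow{\partial_2^\br}\cR^{m+1+n}\xrightarrow{\partial_1^\br}\cR\to 0$$
has $\partial_1^\br = (1-t_1,\dots,1-t_{m+1},-1,\dots,-1)$ and $\partial_2^\br$ equal to the presentation matrix in the proposition. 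In particular $\coker \partial_2^\br = M$, hence $\coker(\partial_2^\br\otimes\BZ[A_\Gamma]) = M\otimes\BZ[A_\Gamma]$ and $H_0(\mathcal{C}'')=0$.

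Write $B_i:=\langle t_i\rangle \subset A_\Gamma$, $u_i:=\sum_{b\in B_i}b$, $\al(B_i)=u_i\BZ[A_\Gamma] = \mathrm{ann}_{\BZ[A_\Gamma]}(1-t_i)$. A Mayer--Vietoris computation for the decomposition $X_\Gamma^\br = X_\Gamma\cup(\text{solid tori})$ gives $H_1(X_\Gamma^\br) = H_1(X_\Gamma)/N$, where $N$ is the $\BZ[A_\Gamma]$-submodule generated by the meridional classes $[u_i\tilde a_i]$. The commutator 2-cells on each covering boundary torus make any two basings of a meridional cycle equal in $H_1(X_\Gamma)$, so $N$ equals the image in $H_1(X_\Gamma)$ of the chain-level submodule $\sum_i\al(B_i)\tilde a_i\subset\ker\partial_{1,\Gamma}$. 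Consequently $H_1(X_\Gamma^\br)=H_1(\mathcal{D})$ for
$$\mathcal{D}:\quad \BZ[A_\Gamma]^m\oplus\bigoplus_{i=1}^n\al(B_i)\xrightarrow{\partial_2^\mathcal{D}}\BZ[A_\Gamma]^{m+1}\xrightarrow{\partial_{1,\Gamma}}\BZ[A_\Gamma]\to 0,$$
with $\partial_2^\mathcal{D}$ equal to $\partial_{2,\Gamma}$ on the first summand and to the inclusion $\al(B_i)\hookrightarrow\BZ[A_\Gamma]\cdot\tilde a_i$ in the $i$-th piece of the second. An injection $\iota:\mathcal{D}\hookrightarrow\mathcal{C}''$ is defined by the identity on $C_0$, $C_1$ and on the first $C_2$-summand, and by $\al(B_i)\hookrightarrow\BZ[A_\Gamma]\cdot d_i$ on the second; the key identity $u_i(1-t_i)=0$ ensures $\partial_{\mathcal{C}''}(u_ig\,d_i)=u_ig\tilde a_i + u_ig(1-t_i)c_i = u_ig\tilde a_i$, which matches $\partial^\mathcal{D}$.

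The cokernel $\mathcal{E}=\mathcal{C}''/\iota(\mathcal{D})$ is the complex $\bigoplus_i\BZ[A_\Gamma]/\al(B_i)\xrightarrow{[g]\mapsto (1-t_i)g\,c_i}\bigoplus_i\BZ[A_\Gamma]\cdot c_i\to 0\to 0$; since $\al(B_i)=\mathrm{ann}(1-t_i)$, this boundary is injective, so $H_2(\mathcal{E})=0$ and $H_1(\mathcal{E})=\bigoplus_i\BZ[A_\Gamma/B_i]$ is $\BZ$-free. The long exact sequence of $0\to\mathcal{D}\to\mathcal{C}''\to\mathcal{E}\to 0$, together with $H_0(\mathcal{C}'')=0$, yields a short exact sequence $0\to H_1(\mathcal{D})\to H_1(\mathcal{C}'')\to K\to 0$ with $K\subset H_1(\mathcal{E})$ free, whence $\Tor_\BZ H_1(\mathcal{C}'')\cong\Tor_\BZ H_1(\mathcal{D}) = \Tor_\BZ H_1(X_\Gamma^\br)$; the lemma then gives $\Tor_\BZ H_1(\mathcal{C}'')=\Tor_\BZ(M\otimes\BZ[A_\Gamma])$, completing the argument. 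The most delicate step is the geometric identification $N=\sum_i\al(B_i)[\tilde a_i]$ in $H_1(X_\Gamma)$: although the Dehn-fill disks are indexed by $A_\Gamma/\langle t_i,L_i\rangle_{A_\Gamma}$ rather than by $A_\Gamma/B_i$, the commutator 2-cells on the covering tori identify the ``missing'' $A_\Gamma$-translates of the meridional class already inside $H_1(X_\Gamma)$, producing the full $\al(B_i)[\tilde a_i]$ as a subgroup.
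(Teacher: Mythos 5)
Your argument is correct and essentially the same as the paper's: both hinge on identifying the Dehn-fill relations with the $\cR$-submodule $\sum_i\al(B_i)[\tilde a_i]$, presenting $M\otimes\BZ[A_\Gamma]$ via the enlarged $(m+n)\times(m+1+n)$ matrix, and using a short exact sequence of complexes whose ``error'' term (your $\cE$, the paper's $\cD_3$) has trivial top homology and $\BZ$-free next homology. You package this as a degree-2 SES $0\to\cD\to\cC''\to\cE\to 0$ and compare $H_1$'s via Lemma~\eqref{e0101}; the paper packages it as a SES of two-term complexes and compares cokernels directly---these are the same computation, just shifted in degree. The one place I'd tighten your wording is the parenthetical about ``commutator 2-cells'': the honest reason the two indexings $A_\Gamma/\langle t_i,L_i\rangle$ and $A_\Gamma/B_i$ give the same submodule $N$ is simply that the deck transformation by the longitude class $L_i$ fixes each boundary torus of $X_\Gamma$ componentwise and hence acts trivially on its $H_1$, so the apparently ``extra'' $A_\Gamma$-translates of $[u_i\tilde a_i]$ already coincide with existing ones in $H_1(X_\Gamma)$.
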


\begin{proof}


For $i=1,\dots,n$ let $d_i=d_i(\Gamma)$ be the degree of $t_i$ in $A=\BZ^n/\Gamma$, and
$ u_i = \sum_{l=0}^{d_i-1} (t_i)^l \in \BZ[A]$.

The homology of the branched covering $H_1(X_\Gamma^\br,\BZ)$ is

$$ H_1(X_\Gamma^\br,\BZ)= H_1(Y_\Gamma,\BZ)/(Rel),$$
where $(Rel)$ is the $\RR$-submodule of $H_1(Y_\Gamma,\BZ)$ generated by $u_i\, \tilde a_i, i=1,\dots,n$.

By definition,  $H_1(Y_\Gamma,\BZ)$ is $H_1$ of the complex \eqref{e401}. By adding relations  $ u_i\, \tilde a_i=0, i=1,\dots,n$, we see that $H_1(X_\Gamma^\br,\BZ)$ is $H_1$ of the following complex
\be
 0\to \RR ^{m} \times \RR^n  \overset{D_1} {\longrightarrow}  \RR^{m+1} \overset{\partial_{1,\Gamma}} {\longrightarrow} \RR \to 0
 \label{e402}
 \ee
where $D_1(x,y) = \partial_{2,\Gamma} (x) + U'(y)$, with  $U': \RR^n \to \RR^{m+1}$ being  the $\RR$-linear map defined by
$$ U'(x_1,\dots,x_n) = \big( u_1\, x_1, \dots ,u_n \, x_n, 0, \dots, 0\big).$$

 Let $U: \RR^n \to \RR^n$ be the $\RR$-linear map given by the diagonal matrix $ U = \diag(u_1,\dots, u_n)$. Certainly $\ker U= \ker U'$.

Applying \eqref{e0101} to the chain complex \eqref{e402}, we get
\be \Tor_\BZ \big( H_1(X_\Gamma^\br,\BZ) \big) \cong \Tor_\BZ \coker D_1.
\label{e601}
\ee
The map $U':\RR^n \to \RR^{m+1}$ descends to $U'': (\RR^n/\ker U') \to \RR^{m+1}$, hence
$\coker D_1 = \coker D_1'$, where
$$ D_1' : \RR ^{m} \times (\RR^n /\ker U) \to \RR^{m+1},$$
defined by $D_1'(x,y) = \partial_2(x)+ U''(y)$. From \eqref{e601} we have
\be \Tor_\BZ \big( H_1(X_\Gamma^\br,\BZ) \big) \cong \Tor_\BZ \big( \coker D_1'\big).
\label{e6011}
\ee


 By tensoring $T: \cR^n \to \cR^n$ with $\BZ[A]$, we get $T_\Gamma: \BZ[A]^n \to \BZ[A]^n$, which is given by a diagonal matrix.
Note that  $T_\Gamma U=0$, i.e. $T_\Gamma$ is 0 on the image of $U$, hence $T_\Gamma$ descend to a map $T'_\Gamma: (\RR^n/\im U)\to \RR^n$.

We have the following commutative diagram with exact vertical lines
$$
\begin{CD}
0  @ >>>   \RR^{m+1} \times (\RR^n/\ker U)   @ > D'_1>>         \RR^{m+1}  @ >>> 0 \\
@VVV   @VV i_1V  @Vi_2 VV @VVV \\
           0   @>>>  \RR^{m+1} \times \RR^n   @ > D_\Gamma>>  \RR^{m+1} \times \RR^n  @  >>> 0 \\
          @VVV   @VV j_1V  @Vj_2 VV @VVV \\
           0 @ >>>          ( \RR^n/\im U) @ > T_\Gamma' >>    \RR^n @ >>> 0
           \end{CD}
           $$
where $i_1(x,y) = (x, U(y))$, $i_2(x) = (x,0)$, $j_1(x,y) = (0, y)$, $j_2(x,y)= y$, and $D_\Gamma$ is the matrix of presentation of $M$, tensoring with $\RR$.

Let the first complex be $\cD_1$, the second $\cD_2$, and the 3-rd $\cD_3$. From the exact sequence $0\to  \cD_1 \to \cD_2 \to \cD_3\to 0$ we have a long exact sequence

\be
\dots H_1(\cD_3) \to H_0(\cD_1) \to H_0(\cD_2) \to H_0(\cD_3) \to 0.\label{e442}
\ee
The first term is $0$ and the last term is free abelian group, by Lemma \ref{l441} below. Hence the second term and the third term in \eqref{e442} have the same $\BZ$-torsion.
Since  $ H_0(\cD_1) =\coker D'_1$ and $H_0(\cD_2) = M \otimes \BZ[A]$, we have
$$\Tor_\BZ \big (\coker D'_1 \big) = \Tor_\BZ \big( M \otimes \BZ[A]\big) ,$$
which, together with \eqref{e6011}, proves the proposition.
\end{proof}

 \begin{lemma} For chain complex $\cD_3$
 $$ 0 \to (\RR^n/\im U) \overset{T_\Gamma' } {\longrightarrow}  \RR^n \to 0$$
 one has $H_1(\cD)=0$ and $H_0(\cD)$ is a free abelian group.
 \label{l441}
 \end{lemma}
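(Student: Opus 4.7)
The plan is to exploit the fact that $\cD_3$ is a diagonal complex: both $U = \diag(u_1,\dots,u_n)$ and $T_\Gamma = \diag(1-t_1,\dots,1-t_n)$ act coordinate-wise, so the whole statement reduces to $n$ independent one-variable assertions about multiplication in $\RR = \BZ[A_\Gamma]$.

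First I would observe that $\im U = \bigoplus_{i=1}^n (u_i)\RR$ inside $\RR^n$, whence
$\RR^n/\im U \;\cong\; \bigoplus_{i=1}^n \RR/(u_i)$.
Since $t_i$ has order $d_i$ in $A_\Gamma$, the identity $(1-t_i)u_i = 1 - t_i^{d_i} = 0$ holds in $\RR$, so multiplication by $1-t_i$ descends to a well-defined map $\mu_i \colon \RR/(u_i) \to \RR$, and $T_\Gamma' = \bigoplus_{i=1}^n \mu_i$. Thus it suffices to show that each $\mu_i$ is injective and has free abelian cokernel.

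For injectivity I would compute the $\RR$-annihilator of $1-t_i$ directly: writing $x = \sum_{a\in A_\Gamma} c_a\,a$, the condition $(1-t_i)x = 0$ forces $c_{t_i^{-1}a} = c_a$ for every $a$, i.e.\ the coefficient function $c$ is constant on every orbit of the cyclic group $\langle t_i\rangle$ acting on $A_\Gamma$. Each such orbit has exactly $d_i$ elements and its indicator is of the form $u_i\cdot a$, so the annihilator equals the ideal $(u_i)$, giving $\ker\mu_i = 0$. For the cokernel, set $B_i = \langle t_i\rangle \subset A_\Gamma$; since $1-t_i$ divides $1-t_i^k$ for every $k$, the principal ideal $(1-t_i)\RR$ coincides with the kernel of the surjection $\BZ[A_\Gamma] \twoheadrightarrow \BZ[A_\Gamma/B_i]$ induced by the quotient $A_\Gamma \to A_\Gamma/B_i$. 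Hence $\coker\mu_i \cong \BZ[A_\Gamma/B_i]$, which is $\BZ$-free on the coset representatives.

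Assembling the direct sum, we get $H_1(\cD_3) = \ker T_\Gamma' = 0$ and $H_0(\cD_3) = \coker T_\Gamma' \cong \bigoplus_{i=1}^n \BZ[A_\Gamma/B_i]$, which is a free abelian group. There is no real obstacle here; the lemma is essentially a small group-ring calculation, and the only point worth naming is the identification of the annihilator of $1-t_i$ with the principal ideal $(u_i)$, which is exactly what makes the diagonal decomposition go through.
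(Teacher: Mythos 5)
Your proposal is correct, and it follows the same overall line as the paper: exploit the diagonal form of $U$ and $T_\Gamma$ to decouple the complex into $n$ one-variable pieces, then for each coordinate use the annihilator relation between $u_i$ and $1-t_i$ for vanishing of $H_1$, and the structure of the ideal $(1-t_i)$ for freeness of $H_0$. The differences are presentational. The paper first replaces $\cD_3$ by the equivalent three-term complex $0 \to \RR^n \overset{U}{\to} \RR^n \overset{T_\Gamma}{\to} \RR^n \to 0$, decomposes it coordinatewise, and then simply cites two facts already established in subsection 2.7: that $(u_i)$ and $(1-t_i)$ are mutual annihilators in $\BZ[A_\Gamma]$, and that $(1-t_i)$ is a primitive lattice, whence the quotient is torsion-free and therefore free. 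You instead keep the two-term quotient complex, prove the annihilator identity by an explicit coefficient-orbit computation, and identify the cokernel directly as $\BZ[A_\Gamma/B_i]$ via the observation $(1-t_i)\RR = \beta(B_i)$, from which freeness is manifest. Your version is slightly more self-contained; the paper's is slightly shorter because it reuses machinery already set up. Both are sound, and your identification of the cokernel as a group ring is a nice concrete endpoint that the paper's primitivity argument leaves implicit.
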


\begin{proof} This is the same as to show that for the chain $\cD'$
$$ 0 \to \RR^n \overset{U} {\longrightarrow} \RR^n \overset{T_\Gamma } {\longrightarrow}  \RR^n \to 0$$
one has $H_1(\cD')=0$, and $H_0(\cD')$ is free abelian.

Since both $U$ and $T_\Gamma$ are diagonal, $\cD'= \bigoplus_{i=1}^n \cD_i'$, where $\cD'_i$ is the complex
$$ 0 \to \RR \overset{u_i} {\longrightarrow} \RR^n \overset{1-t_i } {\longrightarrow}  \RR \to 0.$$
As seen in subsection \ref{ss4}, the principal ideals $(u_i)$ and $(1-t_i)$ are  annihilator of each other, hence $H_1(\cD_i')=0$.
Besides, the ideal $(1-t_i)$ is  primitive as a lattice in $\BZ[A]$, hence $H_0(\cD_i')$ is a free abelian group.
\end{proof}

\subsection{Proof of Theorem \ref{T2}}
\begin{proof} Part (a), the case of non-branched covering, follows immediately from Theorem \ref{T7}.

Let us consider the case of branched covering. By Proposition \ref{p604}, we have

\begin{align} \limsup_{\la \Gamma \ra \to \infty}\frac{\log |\Tor_\BZ(H_1(X_\Gamma^\br, \BZ))|}{|\BZ^n /\Gamma|} & = \limsup_{\la \Gamma \ra \to \infty}\frac{\log \Tor_\BZ(M \otimes \BZ[A_\Gamma])}{|\BZ^n /\Gamma|} \notag \\
&= \BM(\Delta (M))  \quad \text{ by Theorem \ref{T3}}.
\label{e701}
\end{align}

The module  $M_1= \cR^n/T (\cR^n)$ has a free resolution
$$ 0 \to \cR^n  \overset{T} {\longrightarrow} \cR^n \to M_1 \to 0,$$
hence its projective dimension is $ 1$. Also $\Delta_0(M_1) = \prod_{i=1}^n (1-t_i)$.

Let  $M_2= \coker \partial_2$. From the matrix of presentation of $M$ we see that there is an exact sequence

$$ 0 \to M_2 \to M \to M_1 \to 0.$$

Since the projective dimension of $M_1$ is $\le 1$, by \cite[Theorem 3.12]{Hillman}, one has
\begin{align*}
 \Delta_j(M) & = \Delta_j(M_2)  \Delta_0(M_1)\\
 &= \Delta_j(M_2) \prod_{i=1}^n (1-t_i).
 \end{align*}

 It follows that $\Delta(M) = \Delta(M_2) \prod_{i=1}^n (1-t_i)$. Since $\BM(1-t_i)=1$ and $\Delta(M_2)=\Delta(L)$, we have
 $$ \BM(\Delta(M))= \BM(\Delta( L)),$$
 from which and \eqref{e701} one gets part (b) of Theorem \ref{T2}.
 \end{proof}

\section{converging sequences}
\label{last}

\subsection{Statement}

\def\dd{\mathbf r} For a non-zero vector $x \in \BR^n$ let $\dd(x)= x/||x|| \in S^{n-1}$ be the unit vector positively colinear with $x$. Here $S^{n-1}$ is the $(n-1)$-dimensional sphere
of unit vectors in $\BR^n$.
For a subgroup $\Gamma\subset \BZ^n$ of finite index let $d_i= d_i(\Gamma)$ be the degree of $t_i$ in the quotient group $A_\Gamma = \BZ^n /\Gamma$.
Let $\dd(\Gamma)= \dd(d_1,\dots,d_n)\in S_+^{n-1}$, the part of $S^{n-1}$ with non-negative coordinates.

\begin{Thm} Suppose $M$ is a finitely-generated $\cR$-module.
For any  $\kappa \in S^{n-1}_+$, there exists a sequence of finite index subgroups $\Gamma_s\subset \BZ^n, s=1,2,\dots $ such that
$$ \lim_{s\to \infty } \dd(\Gamma_s) = \kappa$$ and
$$ \limsup_{s \infty}
\frac{\log | \Tor_\BZ (M \otimes  \BZ[\BZ^n / \Gamma_s])
|}{|\BZ^n /\Gamma_s|}=  \BM (\Delta(M)). $$
\label{T8}
\end{Thm}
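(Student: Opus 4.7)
The overall idea is to reduce to a single cyclic torsion module $\cR/(f)$ and then construct an explicit ``box'' sequence of lattices whose associated sum becomes a multi--variable Riemann sum for $\BM(f)$. By Theorem~\ref{T4} together with Theorem~\ref{ps} and Proposition~\ref{T5} (the torsion--free part is $\sim 0$), the growth rate of $M$ equals that of $\Tor(M)$, and by Theorem~\ref{T4} again we may replace $\Tor(M)$ by any module pseudo-isomorphic to it. Using the elementary divisor decomposition of a torsion $\cR$-module over the UFD $\cR$, $\Tor(M)$ is pseudo-isomorphic to $\bigoplus_{j} \cR/(p_j^{a_j})$ where $\Delta(M) = \prod_{j} p_j^{a_j}$ (up to units). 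Since the $\BZ$-torsion functor and tensor product distribute over finite direct sums, and since $\BM(\prod p_j^{a_j}) = \sum a_j\BM(p_j)$, it suffices to produce, for each cyclic factor $\cR/(f)$ with $f = p^a$, a sequence $\Gamma_s$ with $\dd(\Gamma_s)\to \kappa$ realizing $\BM(f)$; then summing gives the result for $M$.

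\textbf{Explicit computation for $M = \cR/(f)$.} By Proposition~\ref{mainx}(i) and Lemma~\ref{l5}(b), for any $\Gamma$ of finite index,
\[
|\Tor_\BZ(\cR/(f)\otimes \BZ[A_\Gamma])| \sim |\beta(f;\Gamma)/f\,\beta(f;\Gamma)|,
\]
and the right--hand side is a finite group whose order equals $\prod_{\mathbf{z}\in S^{\perp}}|f(\mathbf{z})|$, where $S^\perp = G(\Gamma)\smallsetminus \bigcup_j G(\Lambda_j)$ is the set of characters of $A_\Gamma$ at which $f$ does not vanish (the $\Lambda_j$ being furnished by Proposition~\ref{p10} applied to $X = V_f$). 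Consequently
\[
\frac{1}{|A_\Gamma|}\log|\Tor_\BZ(\cR/(f)\otimes\BZ[A_\Gamma])| \;=\; \frac{1}{|A_\Gamma|}\sum_{\mathbf{z}\in G(\Gamma),\,f(\mathbf z)\ne 0}\log|f(\mathbf{z})| \;+\; o(1),
\]
and the aim becomes to make this quantity tend to $\BM(f)=\int_{\BT^n}\log|f|\,d\mu$ along a sequence with prescribed direction.

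\textbf{Construction of the sequence and application of Bombieri--Zannier and Lawton.} Write $\kappa=(\kappa_1,\dots,\kappa_n)$. Choose positive integers $a_{s,i}\to\infty$ with $a_{s,i}/\|\mathbf{a}_s\|\to \kappa_i$ (when $\kappa_i=0$, let $a_{s,i}\to\infty$ very slowly), and set $\Gamma_s = \bigoplus_{i=1}^n a_{s,i}\BZ$, so that $A_{\Gamma_s}=\prod_i\BZ/a_{s,i}\BZ$, $\dd(\Gamma_s)\to \kappa$, and $G(\Gamma_s)$ is the product grid of roots of unity of orders $a_{s,i}$. Because every $a_{s,i}\to\infty$, the multi--variable Riemann sum
\[
\frac{1}{a_{s,1}\cdots a_{s,n}}\sum_{\mathbf{z}\in G(\Gamma_s)}\log|f(\mathbf{z})|
\]
converges to $\BM(f)$ by the theorem of Lawton (Boyd's conjecture), provided the singular contribution of the points $\mathbf{z}$ where $|f(\mathbf z)|$ is very small (or zero) is negligible. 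Proposition~\ref{p10}, which rests on the Bombieri--Zannier theorem on torsion cosets (Schinzel's conjecture), confines all torsion zeros of $f$ to the finite union $\bigcup_j G(\Lambda_j)$ of proper algebraic subgroups of $(\BC^*)^n$. The main obstacle is then to check that $|G(\Gamma_s)\cap G(\Lambda_j)|=|A_{\Gamma_s}|/|B_j|$ grows strictly slower than $|A_{\Gamma_s}|$ for each fixed $j$, i.e.\ that $|B_j|=|(\Lambda_j+\Gamma_s)/\Gamma_s|\to\infty$; this is exactly the counting estimate already established inside the proof of Lemma~\ref{l5}(c), and it goes through whenever every $a_{s,i}\to\infty$, regardless of the ratios. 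Together with the near-torsion--point version of Lawton's estimate (to control points where $|f(\mathbf z)|$ is small but nonzero), this gives the convergence of the Riemann sum to $\BM(f)$ along $\Gamma_s$. Combined with the universal upper bound $\limsup\le\BM(\Delta(M))$ from Theorem~\ref{T3}, the $\limsup$ along $\Gamma_s$ equals $\BM(\Delta(M))$, proving Theorem~\ref{T8}.
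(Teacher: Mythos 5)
Your proposal takes a genuinely different route from the paper's, but there is a critical gap at its center. The paper proves Theorem~\ref{T8} by reducing to the one-variable case: it sets $\Gamma_{s,j} = (\bk^{(s)})^\perp + j\,\bk^{(s)}$, so that $\BZ^n/\Gamma_{s,j}$ is \emph{cyclic}, applies the (easier) $n=1$ result (Proposition~\ref{pT31}), and then uses Lawton's theorem together with the Bombieri--Zannier theorem (Proposition~\ref{bz}) to show $\BM(\Delta(M^{(s)})) \to \BM(\Delta(M))$ as the specialization direction $\bk^{(s)}$ escapes to infinity. Your construction instead takes rectangular lattices $\Gamma_s = \bigoplus_i a_{s,i}\BZ$ and tries to argue directly that the multi-variable average $\tfrac{1}{|A_{\Gamma_s}|}\sum_{\mathbf z \in G(\Gamma_s)}\log|f(\mathbf z)|$ converges to $\BM(f)$.

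The gap is that this Riemann-sum convergence is not a consequence of Lawton's theorem, which is what you invoke for it. Lawton's theorem, as stated in the paper, is $\lim_{\la \bk \ra \to \infty} \BM(\tau_\bk(f)) = \BM(f)$, where $\tau_\bk(f)(t) = f(t^{k_1},\dots,t^{k_n})$: it compares true Mahler measures (integrals over $\BS$) of $f$ and its one-variable specializations; it says nothing about convergence of finite averages over grids of roots of unity in $(\BS)^n$. For the latter, the obstruction---which you acknowledge as ``the singular contribution of the points $\mathbf z$ where $|f(\mathbf z)|$ is very small''---requires genuine diophantine input: lower bounds on $|f(\mathbf z)|$ at torsion points $\mathbf z$ that are near but not on $V_f$. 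Removing the actual zeros (via Proposition~\ref{p10}, counting $|B_j|\to\infty$) handles only the points where $f$ vanishes; it gives no control over nearby points where $|f|$ is exponentially small, whose contributions to the sum do not obviously wash out. The ``near-torsion-point version of Lawton's estimate'' you appeal to does not correspond to any result in the paper or in the cited literature, and establishing it is exactly the hard half of Schmidt's proof in the torsion case---which the paper describes as ``long and difficult'' and deliberately circumvents by passing to one variable, where the needed Gelfond--Baker type estimates are available (and had already been exploited by Riley and Gonzalez-Acu\~na--Short). So as written, the heart of your argument has a hole that cannot be closed by citing Lawton.

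Two smaller remarks. The reduction, via Theorems~\ref{T4}, \ref{ps} and Proposition~\ref{T5} and the elementary-divisor decomposition, to cyclic modules $\cR/(p^a)$ is sound and is in the same spirit as the paper's use of pseudo-isomorphism elsewhere; this is a legitimate simplification. The identity $|\beta(f;\Gamma)/f\,\beta(f;\Gamma)| = \prod_{\mathbf z\in S^\perp}|f(\mathbf z)|$ is essentially correct (the left side is the absolute value of the determinant of multiplication by $f$ on the lattice $\beta$, whose $\BC$-eigenvalues are $f(\mathbf z)$ for $\mathbf z\in S^\perp$), but should be stated with the justification that $\beta$ is a full-rank lattice in $\beta_\BC$. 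Neither of these affects the verdict: the missing ingredient is the convergence of the box-grid average itself.
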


\begin{remark} One could prove a similar statement, replacing $M\otimes \BZ[\BZ^n/\Gamma_s]$ with $H_i(\cC \otimes \BZ[\BZ^n/\Gamma_s])$ like in Theorem \ref{T33}.
\end{remark}

The proof and methods of this section are
independent of Theorem \ref{T3}. It gives an alternative proof of ``half" of  Theorem \ref{T3}: The left hand side in the identity of Theorem \ref{T3} is greater than
or equal to the right hand side.
\subsection{A result of Bombieri and Zannier: reduction from $\BZ^n$ to $\BZ$} For $\bk \in \BZ^n$ let $\bk^\perp =\{ \bm \in \BZ^n \mid \bk\cdot \bm =0 \}$, where $\bk \cdot \bm$ is the usual dot product. Define
$$ \la \bk \ra = \la \bk^\perp \ra = \min \{ |x |, x \in \bk^\perp \setminus \{0\}\} .$$

The group homomorphism $\BZ^n \to \BZ$ given by $\bm \to \bm \cdot \bk$ gives rise to the algebra homomorphism
$\tau_\bk : \BQ[t^{\pm1}_1,\dots, t_n^{\pm 1}] \to \BQ[t^{\pm 1}]$ defined by
$$ \tau_\bk (t^\bm)  = t^{\bm \cdot \bk}.$$

The following is a deep result of Bombieri and Zannier \cite{Zannier,BMZ}, which was formulated as a conjecture by Schinzel.

\begin{thm}
Suppose $p_1,p_2\in \BQ[t^{\pm1}_1,\dots, t_n^{\pm 1}]$ are co-prime. There is a constant $C= C(p_1,p_2)$ such that if $\la \bk \ra > C$, then
$ \gcd(\tau_\bk(p_1) ,\tau_\bk(p_2))$ is the product of some (possibly none) cyclotomic polynomials.
\end{thm}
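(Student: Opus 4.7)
The plan is to argue by contradiction, converting the gcd statement into a torsion-anomalous intersection problem in $(\BC^*)^n$ and then applying the structure theorem of Bombieri--Masser--Zannier. Suppose $h(t) := \gcd(\tau_\bk(p_1),\tau_\bk(p_2))$ has an irreducible factor $f \in \BQ[t]$ which is not cyclotomic, and let $\alpha \in \overline{\BQ}^*$ be a root of $f$; then $\alpha$ is not a root of unity. The identities $\tau_\bk(p_j)(\alpha)=0$ for $j=1,2$ translate, by definition of $\tau_\bk$, to $p_j(\alpha^{k_1},\dots,\alpha^{k_n})=0$, so the point $P=(\alpha^{k_1},\dots,\alpha^{k_n})$ lies in the common zero locus $V = V_{p_1}\cap V_{p_2}\subset (\BC^*)^n$. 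Since $p_1,p_2$ are coprime in $\BQ[t_1^{\pm1},\dots,t_n^{\pm1}]$, the variety $V$ has codimension $\ge 2$. Moreover $P$ lies on the $1$-dimensional algebraic subgroup $H_\bk = \{(t^{k_1},\dots,t^{k_n}) : t \in \BC^*\}$.

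Next I would invoke the Bombieri--Masser--Zannier structure theorem on torsion-anomalous intersections: because $V$ has codimension $\ge 2$, the union of $V \cap G$ as $G$ ranges over $1$-dimensional algebraic subgroups of $(\BC^*)^n$, minus the torsion part, is contained in a \emph{finite} union of proper torsion cosets $\bigcup_{i=1}^N \zeta_i G_i$, where each $G_i \subsetneq (\BC^*)^n$ is a proper algebraic subgroup, $\zeta_i$ is a torsion point, and the whole finite list depends only on $V$, hence only on $p_1,p_2$. Since $\alpha$ is not a root of unity and $\bk \neq 0$, the point $P$ is itself not torsion, so $P \in \zeta_i G_i$ for some $i$. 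Each $G_i$ is cut out by character equations $\bz^{\bm} = 1$ with $\bm$ ranging over a sublattice $\Xi_i \subset \BZ^n$ of positive rank; choose for each $i$ a generating set $S_i$ of $\Xi_i$, and set $C := \max\{|\bm| : \bm \in \bigcup_{i=1}^N S_i\}$, which depends only on $p_1,p_2$.

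Finally, the condition $P \in \zeta_i G_i$ says that for each $\bm \in S_i$ we have $P^{\bm} = \zeta_i^{\bm}$, i.e.\ $\alpha^{\bm\cdot\bk} = \zeta_i^{\bm}$ is a root of unity. Because $\alpha$ itself is not a root of unity, this forces $\bm\cdot\bk = 0$ for every $\bm \in S_i$. Since $\Xi_i$ has positive rank, at least one $\bm \in S_i$ is nonzero, and then $\bm \in \bk^\perp \setminus \{0\}$ with $|\bm| \le C$. Hence $\la\bk\ra \le C$, contradicting the hypothesis $\la\bk\ra > C$, and we conclude that every irreducible factor of $h(t)$ is cyclotomic. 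The main obstacle is the appeal to the structure theorem for torsion-anomalous intersections, a deep piece of Diophantine geometry for which one needs Zannier's work (and the refinements of Bombieri--Masser--Zannier) rather than only Lang's torsion-points theorem; once this black box is available, the passage through $\tau_\bk$, the parametrization $P=(\alpha^{k_1},\dots,\alpha^{k_n})$, and the orthogonality $\bm\cdot\bk=0$ are essentially formal bookkeeping.
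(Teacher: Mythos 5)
The paper does not prove this theorem; it is quoted as a deep result of Bombieri and Zannier (references [SZ] and [BMZ]), so there is no internal proof to compare against. Your geometric translation is a reasonable first step --- a root $\alpha$ of a non-cyclotomic factor of $\gcd(\tau_\bk(p_1),\tau_\bk(p_2))$ gives a non-torsion point $P=(\alpha^{k_1},\dots,\alpha^{k_n})$ of $V=V_{p_1}\cap V_{p_2}$ lying on $H_\bk$, with $\operatorname{codim} V\ge 2$ --- but the structure theorem you invoke is not correct, and this breaks your final step. It is false that $\bigcup_G (V\cap G)$, over one-dimensional algebraic subgroups $G$, minus its torsion points, lies in finitely many \emph{torsion} cosets. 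Take $n=3$, $p_1=t_1t_2-1$, $p_2=t_3-2$, so $V=\{(s,s^{-1},2): s\in\BC^*\}$. For any $\bk=(k_1,-k_1,k_3)$ with $k_3\neq 0$, the non-torsion point $(2^{k_1/k_3},2^{-k_1/k_3},2)$ lies in $V\cap H_\bk$; these points are dense in $V$, and $V$ is a coset of the subtorus $\{(s,s^{-1},1)\}$ by the non-torsion element $(1,1,2)$, so it is contained in no finite union of torsion cosets. (The theorem still holds here, with $C=\sqrt2$, because every such $\bk$ is orthogonal to $(1,1,0)$.)

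The torsion hypothesis is exactly what your last deduction uses: $P^{\bm}=\zeta_i^{\bm}$ a root of unity forces $\bm\cdot\bk=0$. With a merely algebraic coset $g_iG_i$, some $\bm$ will have $g_i^{\bm}$ not a root of unity (in the example, $\bm=(0,0,1)$ gives $g_i^{\bm}=2$), and $\alpha^{\bm\cdot\bk}=g_i^{\bm}$ no longer forces $\bm\cdot\bk=0$. The natural repair --- restrict to the sublattice $\{\bm\in\Xi_i : g_i^{\bm}\text{ is a root of unity}\}$ --- requires showing that this sublattice is nonzero and has generators of bounded norm independent of $\bk$, and that is precisely where the real content of the theorem sits; it is not formal bookkeeping, and it is not delivered by a clean structure theorem used as a black box. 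Zannier's actual proof (appendix to [SZ]) proceeds by induction on $n$ with delicate height estimates and a separate treatment of the degenerate case in which $V$ lies in a coset of a proper subtorus, and the [BMZ] derivation is similarly substantial. For the purposes of this paper the correct move is simply to cite the theorem.
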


From this one can easily deduce the following.

\begin{proposition} Suppose $p_1,\dots, p_\ell\in \BZ[t^{\pm1}_1,\dots, t_n^{\pm 1}]$.
There exists a constant $C= C(p_1,\dots,p_k)$  such that if $\la \bk\ra > C$, then
$$ \gcd (\tau_\bk(p_1), \dots, \tau_\bk(p_\ell) ) = \phi\,  \tau_\bk (\gcd(p_1,\dots, p_\ell)),$$
where $\phi$ is a product of cyclotomic polynomials.
\label{bz}
\end{proposition}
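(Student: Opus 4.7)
The plan is to reduce the problem to the Bombieri--Zannier theorem stated just above and then induct on $\ell$. First, setting $d=\gcd(p_1,\dots,p_\ell)$ and writing $p_i = d\,q_i$ with $\gcd(q_1,\dots,q_\ell)=1$, I observe that since $\tau_\bk$ is a ring homomorphism, $\tau_\bk(p_i) = \tau_\bk(d)\,\tau_\bk(q_i)$. For $\la\bk\ra$ exceeding the diameter of the support of $d$, distinct monomials of $d$ map to distinct monomials under $\tau_\bk$ (a collision $\bm_1\cdot\bk = \bm_2\cdot\bk$ would force $\bm_1-\bm_2 \in \bk^\perp$, which is excluded once $|\bm_1-\bm_2|<\la\bk\ra$), so $\tau_\bk(d)\neq 0$ and
$$\gcd(\tau_\bk(p_1),\dots,\tau_\bk(p_\ell)) = \tau_\bk(d)\,\gcd(\tau_\bk(q_1),\dots,\tau_\bk(q_\ell)).$$
Hence it suffices to prove the reduced statement: if $\gcd(q_1,\dots,q_\ell)=1$ then $\gcd(\tau_\bk(q_1),\dots,\tau_\bk(q_\ell))$ is a product of cyclotomic polynomials for $\la\bk\ra$ large.

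I would prove this by induction on $\ell$. The base case $\ell=2$ is precisely the Bombieri--Zannier theorem (after first peeling off $\gcd(q_1,q_2)$, which is a unit by assumption). For $\ell>2$, using that $\BC[t^{\pm 1}]$ is a PID, I write
$$\gcd(\tau_\bk(q_1),\dots,\tau_\bk(q_\ell)) = \gcd\bigl(\gcd(\tau_\bk(q_1),\tau_\bk(q_2)),\tau_\bk(q_3),\dots,\tau_\bk(q_\ell)\bigr).$$
Applying the base case to $q_1,q_2$ after extracting $g:=\gcd(q_1,q_2)$ gives $\gcd(\tau_\bk(q_1),\tau_\bk(q_2)) = \phi_1\,\tau_\bk(g)$ with $\phi_1$ cyclotomic. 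Applying the inductive hypothesis to the $\ell-1$ polynomials $g,q_3,\dots,q_\ell$ (whose overall gcd is $\gcd(q_1,\dots,q_\ell)=1$) yields $\gcd(\tau_\bk(g),\tau_\bk(q_3),\dots,\tau_\bk(q_\ell)) = \phi_2$ with $\phi_2$ cyclotomic.

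To combine these, I would track multiplicities at each irreducible $\pi\in\BC[t^{\pm 1}]$. Let $h$ denote the final gcd. For any non-cyclotomic irreducible $\pi$, $\ord_\pi(\phi_1\,\tau_\bk(g)) = \ord_\pi(\tau_\bk(g))$, so
$$\ord_\pi(h) = \min\bigl(\ord_\pi(\tau_\bk(g)),\ord_\pi(\tau_\bk(q_3)),\dots,\ord_\pi(\tau_\bk(q_\ell))\bigr) = \ord_\pi(\phi_2) = 0.$$
Therefore $h$ has no non-cyclotomic irreducible factors, completing the induction. Unwinding the initial reduction, $\gcd(\tau_\bk(p_1),\dots,\tau_\bk(p_\ell)) = \tau_\bk(d)\,\phi$ with $\phi$ cyclotomic, which is the desired conclusion. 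The constant $C(p_1,\dots,p_\ell)$ is the maximum of the base-case Bombieri--Zannier constants invoked at each level of the recursion and the diameter bounds guaranteeing that $\tau_\bk(d),\tau_\bk(g)$, and the other intermediate Laurent polynomials stay non-zero; this is a finite maximum since the induction terminates.

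The main obstacle I anticipate is the combination step: ensuring that the cyclotomic factors introduced at earlier stages do not leak into the non-cyclotomic part of the final gcd. This is resolved by the $\ord_\pi$-bookkeeping above, but it is the only place where the UFD structure of $\BC[t^{\pm 1}]$ plays an essential role. Away from this point the argument is just a mechanical peeling off of common factors one pair at a time, together with the clean multiplicativity of the ring homomorphism $\tau_\bk$.
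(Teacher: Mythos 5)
Your strategy is the same as the paper's: reduce to $\gcd(q_1,\dots,q_\ell)=1$, induct on $\ell$ with the Bombieri--Zannier theorem as the base case, and then combine. The $\ord_\pi$-bookkeeping you use to absorb the cyclotomic factors coming from earlier stages is in fact \emph{more} explicit than what the paper writes: the paper passes from $\gcd(\tau_\bk(p_1),\phi\,\tau_\bk(q))$ to $\phi'\,\tau_\bk(\gcd(p_1,q))$ simply ``by the case $\ell=2$,'' leaving the absorption of the extraneous $\phi$ implicit, and your order-at-$\pi$ argument is exactly what is needed to justify that step.

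There is, however, a genuine gap. You run the induction over $\BC[t^{\pm 1}]$, so what you actually prove is that the gcd over $\BC$ (equivalently over $\BQ$, which is what Bombieri--Zannier directly gives) is cyclotomic. The gcd in the proposition must be taken in $\BZ[t^{\pm 1}]$: in Proposition~\ref{p5444} the left-hand side is literally $\Delta(M^{(s)})$, a $\BZ$-gcd of minors, and a non-unit integer factor $a$ in that $\BZ$-gcd would contribute $\log a$ to $\BM(\Delta(M^{(s)}))$ and spoil the limit. What you have shown is that the \emph{primitive part} of $\gcd_{\BZ[t^{\pm1}]}(\tau_\bk(q_1),\dots,\tau_\bk(q_\ell))$ is cyclotomic; you still owe the argument that its integer content is $1$. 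The fix is exactly the support/no-collision observation you already invoked to get $\tau_\bk(d)\neq 0$: once $\la\bk\ra$ exceeds the diameter of each support, $\tau_\bk$ permutes (rather than merges) the monomials, so it preserves the multiset of coefficients of each $q_i$ and hence its content, and $\gcd(q_1,\dots,q_\ell)=1$ forces the contents of the $q_i$ to be jointly coprime. This is precisely the role of the paper's ``observation on the coefficients'' in its $\ell=2$ step (showing the integer $a$ in $\gcd_{\BZ}=a\phi$ equals $1$); you record that observation but apply it only to the non-vanishing of $\tau_\bk(d)$, not to the contents.
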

\begin{proof} By dividing each of $p_i$ by $\gcd(p_1,\dots, p_\ell)$ we can assume that $\gcd(p_1,\dots, p_\ell)=1$.

We will use induction on $\ell$. But first make the following well-known observation on the coefficients of $\tau_\bk(p)$ of a polynomial  $p \in \BZ[t^{\pm 1}_1, \dots, t^{\pm 1}_n]$ having the form
$$ p = \sum_{\bm \in \mathcal N} c_\bm\, t^{\bm},$$
where $\mathcal N \subset \BZ$ is a finite set. Then
\be
 \tau_\bk(p) = \sum_{\bm \in \mathcal N} c_\bm\, t^{\bm\cdot \bk}.
 \label{e80001}
 \ee
If $\la \bk \ra $ is greater than $|\bm -\bm'|$ for any two $\bm \neq \bm' \in \mathcal N$, then $\bk\cdot \bm \neq \bk \cdot \bm'$, and
\eqref{e80001} shows that the coefficients of $\tau_\bk(p)$, in some order, are exactly the coefficients of $p$.

Now we proceed with induction. Suppose $\ell=2$. By Bombieri and Zannier result, over $\BQ[t^{\pm1}_1,\dots, t_n^{\pm 1}]$, $\gcd(\tau_\bk (p_1),\tau_\bk(p_2))=\phi$, a product of cyclotomic
polynomials. Hence over $\BZ[t^{\pm1}_1,\dots, t_n^{\pm 1}]$, $\gcd(p_1,p_2)=a \phi$, for some integer $a$. It follows that $a$ is the $\gcd$ of all the coefficients of $\tau_\bk (p_1)$ and $\tau_\bk(p_2)$. By the above observation, with $\la \bk \ra$ big enough, this means $a$ is the $\gcd$ of all the coefficients of $p_1$ and $p_2$, which must be 1. This proves the statement when $\ell =2$.

Now assume $ \ell \ge 3$.
One has
\begin{align*} \gcd (\tau_\bk(p_1), \dots, \tau_\bk(p_\ell) ) & = \gcd (\tau_\bk(p_1),   \gcd ( \tau_\bk(p_2), \dots, \tau_\bk(p_\ell) ) \\
&= \gcd (\tau_\bk(p_1), \phi\,  \tau_\bk \big(  \gcd ( p_2, \dots, p_\ell )\big) \quad \text{ by induction} \\
&=\phi '\,  \tau_\bk \Big(   \gcd \big(p_1,  \gcd ( p_2, \dots, p_\ell) \big)\Big)  \quad \text{ by case $\ell =2$} \\
&=\phi '\,  \tau_\bk \big(   \gcd (p_1,   p_2, \dots, p_\ell) \big).
\end{align*}
Here $\phi,\phi'$ are product of cyclotomic polynomials. The proof is completed.
\end{proof}
\subsection{A result of Lawton}Recall that the additive  Mahler measure $\BM(f)$ of $f \in \BC[t^{\pm 1}_1,\dots, t^{\pm 1}_n], f\neq 0 $, is defined by
$$ \BM(f) = \int_{\BS^n} \log |f(x)| d \sigma,$$
where $\BS^n$ is the $n$-dimensional torus, and $d\sigma$ is the invariant Haar measure on $\BS^n$ normalized so that $\int_{\BS^n}d \sigma =1$.

The Mahler measure is additive, $\BM(fg) = \BM(f) + \BM(g)$. It is known that
$\BM(f)=0$ is and only $f$ is a generalized cyclotomic polynomial, see e.g. \cite{Zannier,Schmidt}.

The following approximation result was a conjecture of D. Boyd, and was proved by Lawton, see e.g. \cite{Lawton,Zannier,Schmidt}.

\begin{thm}{\rm (Lawton)}
 Suppose $f \in \BC[t^{\pm 1}_1,\dots, t^{\pm 1}_n], f\neq 0 $. Then
$$ \lim_{\la \bk \ra \to \infty }\BM(\tau_\bk(f)) = \BM(f).$$
\end{thm}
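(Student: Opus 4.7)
The plan is to combine Weyl-type equidistribution of the one-parameter orbit $\{(z^{k_1},\dots,z^{k_n}):z\in\BS\}$ inside $\BS^n$ with a careful truncation of the singularity of $\log|f|$. The starting identity is
\[
\BM(\tau_\bk(f))=\int_\BS \log|f(z^{k_1},\dots,z^{k_n})|\,dz,
\]
where $dz$ is the normalized Haar measure on $\BS$, while $\BM(f)=\int_{\BS^n}\log|f|\,d\sigma$; the theorem thus amounts to promoting the line integral along the orbit to the torus integral against Haar measure in the limit $\la\bk\ra\to\infty$.

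First I would establish equidistribution for continuous test functions. For $\bm\in\BZ^n$, one has $\int_\BS z^{\bm\cdot\bk}\,dz=0$ unless $\bm\in\bk^\perp$, in which case it equals $1$. Hence if $0\ne\bm$ satisfies $|\bm|<\la\bk\ra$, the $\bm$-th Fourier coefficient of the pushforward measure vanishes, matching that of Haar measure on $\BS^n$. By Stone--Weierstrass, for every continuous $g\colon\BS^n\to\BR$ one has $\int_\BS g(z^{k_1},\dots,z^{k_n})\,dz\to\int_{\BS^n}g\,d\sigma$ as $\la\bk\ra\to\infty$.

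Next I would regularize by $g_\epsilon:=\log\max(|f|,\epsilon)\in C(\BS^n)$. The previous step gives $\int_\BS g_\epsilon(z^{k_1},\dots,z^{k_n})\,dz\to\int_{\BS^n}g_\epsilon\,d\sigma$ as $\la\bk\ra\to\infty$, and dominated convergence sends $\int g_\epsilon\,d\sigma\to\BM(f)$ as $\epsilon\to 0^{+}$. Because $g_\epsilon\ge\log|f|$ pointwise, this immediately yields the upper bound $\limsup_{\la\bk\ra\to\infty}\BM(\tau_\bk(f))\le\BM(f)$.

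The main obstacle is the matching lower bound, which requires controlling the near-zero remainder
\[
R(\epsilon,\bk):=\int_{\{z\in\BS\,:\,|f(z^\bk)|<\epsilon\}}\log\frac{\epsilon}{|f(z^\bk)|}\,dz
\]
uniformly as $\epsilon\to 0$ for $\la\bk\ra$ large. My plan is to factor $f$ into irreducibles and handle each factor separately. Generalized cyclotomic factors contribute $0$ to both sides of the sought limit (and are taken by $\tau_\bk$ to polynomials with Mahler measure $0$), so only non-cyclotomic irreducible factors matter; for such $f$, the real-analytic set $Z_f\cap\BS^n$ has real codimension at least one, and after stratifying I would use a {\L}ojasiewicz inequality $|f(x)|\ge c\cdot\mathrm{dist}(x,Z_f\cap\BS^n)^{\alpha}$ in a neighborhood of $Z_f\cap\BS^n$. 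The one-parameter orbit $\{z^\bk\}$, by the equidistribution step applied to a smooth approximation of the indicator of a $\delta$-tube about $Z_f\cap\BS^n$, meets such a tube in a set of Haar measure $O(\delta)$, with implicit constants independent of $\bk$ once $\la\bk\ra$ is large. Combining the two estimates yields $R(\epsilon,\bk)=O(\epsilon^{1/\alpha}\log(1/\epsilon))$ uniformly in $\bk$ with $\la\bk\ra$ sufficiently large, which together with the upper bound completes the proof.
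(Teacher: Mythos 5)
Your equidistribution step, the regularization $g_\epsilon=\log\max(|f|,\epsilon)$, and the resulting upper bound $\limsup_{\la\bk\ra\to\infty}\BM(\tau_\bk(f))\le\BM(f)$ are all correct; the upper bound is in effect the portmanteau inequality for the upper semicontinuous, bounded-above function $\log|f|$. The genuine gap is in the lower bound, and it sits exactly where you flag the difficulty: the claim that the orbit $\{z^\bk\}$ meets a $\delta$-tube about $Z_f\cap\BS^n$ in Haar measure $O(\delta)$ ``with implicit constants independent of $\bk$ once $\la\bk\ra$ is large'' does not follow from soft equidistribution. Applying Weyl's criterion to a smooth majorant of the indicator of a $\delta$-tube requires killing Fourier modes out to frequency $\sim 1/\delta$, so the threshold $N(\delta)$ on $\la\bk\ra$ blows up as $\delta\to 0$. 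In the resulting estimate $R(\epsilon,\bk)\le\alpha\int_0^{\delta_\epsilon}\mu_\bk\{\mathrm{dist}(\cdot,Z_f)<s\}\,ds/s$ the part of the integral near $s=0$ is therefore uncontrolled: one would need $\la\bk\ra>\sup_{0<s<\delta_\epsilon}N(s)=\infty$. In measure-theoretic terms, weak-$*$ convergence of the orbit measures $\mu_\bk$ to Haar, combined with lower semicontinuity of $\log^-|f|$, yields only $\liminf\int\log^-|f|\,d\mu_\bk\ge\int\log^-|f|\,d\sigma$ (which is the upper bound again); the lower bound needs the reverse inequality, i.e.\ a uniform-integrability or ``no escape of mass to $Z_f$'' statement, which neither equidistribution nor the {\L}ojasiewicz inequality by itself supplies --- {\L}ojasiewicz concerns $f$ on $\BS^n$ alone and says nothing about how well the thin orbit $\{z^\bk\}$ samples the singular set as $\bk$ varies.

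The ingredient that closes this gap in Lawton's proof is combinatorial rather than geometric: $\tau_\bk(f)$ is a \emph{lacunary} one-variable Laurent polynomial. Once $\la\bk\ra$ exceeds the diameter of the support of $f$, the map $\bm\mapsto\bm\cdot\bk$ is injective on that support, so $\tau_\bk(f)$ has exactly the same nonzero coefficients as $f$, merely redistributed; the number $k$ of terms and the coefficient sizes are fixed while only the degree grows with $|\bk|$. Lawton's key lemma is a degree-independent bound on the Haar measure of $\{z\in\BS : |g(z)|<u\}$ for a $k$-term polynomial $g$, depending only on $k$, the coefficients, and $u$ (a Tur\'an/Remez-type phenomenon). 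Substituted into $R(\epsilon,\bk)=\int_0^\epsilon m\{|\tau_\bk(f)|<u\}\,du/u$, this gives the uniform-in-$\bk$ decay that your proposal needs but cannot produce; without such a degree-free estimate the argument only establishes the inequality $\limsup\BM(\tau_\bk(f))\le\BM(f)$. For the record, the paper does not prove this statement but cites it to Lawton, Schinzel--Zannier, and Schmidt, so there is no in-paper proof to compare against.
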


\def\bks{\bk^{(s)}}
\def\ks{k^{(s)}}
\def\fQ{{\cR_1}}

\def\Ms{M^{(s)}}
\subsection{A converging sequence} The following follows from Bombieri-Zannier and Lawton results. Denote $\fQ= \BZ[t^{\pm1}]$.
 \begin{proposition} Let $\bks\in \BZ^n, s= 1,2,\dots$ be any sequence such that $\lim _{s\to \infty} \la \bks \ra =\infty$, and $M$ a finitely-generated $\cR$-module. Let $\Ms = M \otimes \fQ$, where
 $\fQ$ is considered as an $\cR$-module via $\tau_s:= \tau_{\bks}: \cR \to \fQ$. Then

$$ \lim _{s\to \infty} \BM( \Delta(\Ms)) = \BM(\Delta(M)).$$
\label{p5444}
\end{proposition}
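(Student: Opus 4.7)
The plan is to express $\Delta(\Ms)$ through a presentation matrix of $M$, reduce it to $\tau_s(\Delta(M))$ up to a cyclotomic factor by Proposition \ref{bz}, and then apply Lawton's theorem. First I would fix a free presentation $\cR^{m_1} \overset{\partial_1}{\longrightarrow} \cR^{m_0} \to M \to 0$. Tensoring with $\fQ$ along $\tau_s$ gives a presentation of $\Ms$ whose matrix is $\tau_s(\partial_1)$, so for every $j$ the polynomial $\Delta_j(\Ms)$ is, up to units, the gcd of the $(m_0-j)$-minors of $\tau_s(\partial_1)$; these are exactly the $\tau_s$-images of the $(m_0-j)$-minors of $\partial_1$.

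The key preparatory step will be to show that $r_s := \rk_{\fQ} \Ms$ equals $r := \rk_\cR M$ for all sufficiently large $s$. Setting $\rho = m_0 - r$, every $(\rho+1)$-minor of $\partial_1$ vanishes, hence so does every $(\rho+1)$-minor of $\tau_s(\partial_1)$, giving $r_s \ge r$ unconditionally. For the opposite inequality I would invoke the elementary observation already used in the proof of Proposition \ref{bz}: for any fixed nonzero Laurent polynomial $p \in \cR$ one has $\tau_s(p) \ne 0$ as soon as $\la \bks \ra$ exceeds the diameter of the support of $p$, because then the monomials $t^\bm$ appearing in $p$ are sent to distinct monomials in $t$ and no cancellation can occur. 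Since there are only finitely many $\rho$-minors of $\partial_1$ and at least one of them is nonzero, at least one $\rho$-minor of $\tau_s(\partial_1)$ is nonzero for $s$ large, forcing $r_s \le r$.

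Once the ranks match, let $p_1,\ldots,p_N$ denote the $\rho$-minors of $\partial_1$, so that $\Delta(M) = \gcd(p_1,\ldots,p_N)$ and $\Delta(\Ms) = \gcd(\tau_s(p_1),\ldots,\tau_s(p_N))$ up to units. Proposition \ref{bz} applied to $\{p_1,\ldots,p_N\}$ then yields, for all $s$ with $\la\bks\ra$ sufficiently large,
$$ \Delta(\Ms) = \phi_s \, \tau_s(\Delta(M)) $$
with $\phi_s$ a product of cyclotomic polynomials. Since $\BM$ is additive and vanishes on cyclotomic polynomials, $\BM(\Delta(\Ms)) = \BM(\tau_s(\Delta(M)))$, and Lawton's theorem then delivers $\lim_s \BM(\Delta(\Ms)) = \BM(\Delta(M))$, as desired.

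The only real obstacle is the rank-stabilization step: without it one would be taking the gcd of minors of the wrong size, severing the link between $\Delta(\Ms)$ and $\Delta(M)$. Fortunately, the finite-support injectivity of $\tau_s$ for large $\la\bks\ra$ is already implicit in the paper's own proof of Proposition \ref{bz}, so no new ingredient is needed beyond Bombieri--Zannier and Lawton.
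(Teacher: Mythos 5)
Your proof is correct and follows essentially the same route as the paper: present $\Ms$ by $\tau_s(\partial_1)$, reduce the gcd of the relevant minors to $\tau_s(\Delta(M))$ up to a cyclotomic factor via Proposition \ref{bz}, and finish with Lawton. The one thing you add beyond the paper's own argument is the explicit verification that $\rk_\fQ \Ms = \rk_\cR M$ for $\la\bks\ra$ large (so the gcd is taken over minors of the right size); the paper leaves this implicit, but your treatment of it is a genuine tightening rather than a detour.
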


\begin{remark} It not true in general that  $\lim _{s\to \infty}  \Delta(\Ms) = \Delta(M)$.
\end{remark}

\begin{proof} Suppose $M$  has a presentation matrix $\partial$ of size $m_1 \times m_0$, with entries in $\cR$. Then $\Ms$ has presentation matrix $\tau_s(\partial)$, with entries in $\fQ$.

Let $j$ be the smallest integer such that $\Delta_j(M) \neq 0$.
This means all the $(m_0-j')$-minor of $\partial$ is 0 if $j' < j$, and if the $(m_0-j)$-minors of $\partial$ are
$p_1,\dots, p_\ell\in \cR$, then
$$\Delta(M) = \Delta_j(M) = \gcd(p_1,\dots,p_\ell).$$

Note that every minor of $\tau_s(\partial)$ is obtained from the corresponding minor
by by applying $\tau_s$.  It follows that all the $(m_0-j')$-minor of $\tau_s(\partial)$ is 0 if $j' < j$, and the $m_0-j$ minors are $ \tau_s(p_1), \dots, \tau_s(p_\ell)$.

By Proposition \ref{bz}, for $s$ big enough,

$$ \gcd(\tau_s(f_1), \dots, \tau_s(f_r)) = \phi \, \tau_s \big( \gcd (f_1,\dots, f_r)\big) ,$$
where $\phi $ is a product of cyclotomic polynomials. This means

$$ \Delta(\Ms) = \phi \, \tau_s \big(  \Delta(M)\big) .$$
Using additivity of the Mahler measure and the fact that the Mahler measure of a cyclotomic polynomial is 0, we have

$$ \BM (\Delta(\Ms)) = \BM \left ( \tau_s \big(  \Delta(M)\big)\right) .$$

Since $\la \bks \ra \to \infty$ as $s\to \infty$, by Lawton theorem, we have

$$  \lim _{s\to \infty} \BM( \Delta(\Ms))=  \lim _{s\to \infty}  \BM \tau_s \big(  \Delta(M)\big) = \BM(\Delta(M)).$$
\end{proof}

\subsection{Theorem \ref{T3}, the case $n=1$}
In the previous section we approximate $\BZ^n$ by $\BZ^n/(\bk^\perp)$, which has rank 1. Now we want to approximate abelian rank 1 group by
finite cyclic group. Here we give a short, independent of previous sections, proof of the case $n=1$ of Theorem \ref{T3}.
\begin{proposition} \label{pT31}
Suppose $M$ is a finitely generated $\fQ$-module.
Then
$$ \lim_{\ell \to \infty} \frac{\Tor_\BZ \big( M \otimes \BZ[\BZ/\ell]  \big) }{\ell} = \BM(\Delta(\Tor M)).$$
\end{proposition}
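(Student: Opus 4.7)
The plan is to give a direct, classical argument for the case $n=1$, independent of the machinery of Sections 2--3. The key idea is to identify $|\Tor_\BZ(M \otimes \BZ[\BZ/\ell])|$ with a product of values of a polynomial at $\ell$-th roots of unity, and then recognize its logarithm divided by $\ell$ as a Riemann sum converging to the Mahler measure.

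First I would reduce to the cyclic torsion module case. The torsion-free quotient $M/\Tor(M)$ embeds in some $\fQ^r$, whose tensor product with $\BZ[\BZ/\ell]$ is $\BZ[\BZ/\ell]^r$ and has no $\BZ$-torsion, while the cokernel of the embedding is $\fQ$-torsion and contributes only $\operatorname{Tor}^\fQ_1$ terms bounded by Lemma \ref{l7}, of negligible growth. For the torsion part, since $\BQ[t^{\pm 1}]$ is a PID, $\Tor M$ is pseudo-isomorphic over $\fQ$ to a direct sum $\bigoplus_i \fQ/(f_i)$ with $\prod_i f_i = \Delta(\Tor M)$ up to units; moreover, for $n=1$ every pseudo-zero finitely generated $\fQ$-module is automatically finite, so the kernels and cokernels of this pseudo-isomorphism are finite. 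Applying Lemma \ref{l8} after tensoring with $\BZ[\BZ/\ell]$ absorbs these finite errors, and by the additivity of both $\log|\Tor_\BZ|$ on direct sums and of $\BM$ under products, it suffices to prove the limit for a single $M = \fQ/(f)$ with $f \neq 0$.

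Second, for $M = \fQ/(f)$, one computes
\[ M \otimes_\fQ \BZ[\BZ/\ell] \;\cong\; \BZ[t]/(f(t),\,t^\ell-1). \]
When $\gcd(f,t^\ell-1)=1$ in $\BQ[t]$ (which holds for all but finitely many $\ell$), this group is finite of order
\[ \bigl|\Res(f,\,t^\ell-1)\bigr| \;=\; \prod_{\zeta^\ell=1}|f(\zeta)|, \]
so that
\[ \frac{1}{\ell}\log\bigl|\Tor_\BZ(M \otimes \BZ[\BZ/\ell])\bigr| \;=\; \frac{1}{\ell}\sum_{\zeta^\ell=1}\log|f(\zeta)|. \]
When $f$ shares one or several cyclotomic factors $\Phi_{d_0}$ with $t^\ell-1$, the module $\BZ[t]/(f,t^\ell-1)$ has positive $\BZ$-rank, but its finite $\BZ$-torsion part still agrees, up to a multiplicative error of negligible growth, with the product of $|f(\zeta)|$ over those $\ell$-th roots of unity $\zeta$ at which $f$ does not vanish; since only boundedly many factors are excluded (one for each distinct cyclotomic factor of $f$), their contribution vanishes after dividing by $\ell$.

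Finally, $\tfrac{1}{\ell}\sum_{\zeta^\ell=1}\log|f(\zeta)|$ converges to $\int_{\BS}\log|f(z)|\,d\sigma = \BM(f) = \BM(\Delta(\Tor M))$ as $\ell\to\infty$. The step I expect to be the main obstacle is handling the case when $f$ has zeros on the unit circle $\BS$, where individual terms $\log|f(\zeta)|$ may be very negative; this is dealt with by the classical Gelfond--Baker lower bound $|f(\zeta)| \geq c\,\ell^{-A}$ valid for $\ell$-th roots of unity $\zeta$ that are not themselves zeros of $f$, which together with the integrability of $\log|f|$ on $\BS$ and the equidistribution of roots of unity yields convergence of the Riemann sums to the Mahler integral, completing the proof.
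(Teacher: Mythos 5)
Your plan takes a genuinely different route from the paper. The paper's argument for the $n=1$ case is short precisely because it does \emph{not} re-derive the torsion-module case: it reduces $M$ to $\Tor(M)$ (handling the torsion-free quotient via an embedding into a free module and a universal-annihilator trick) and then simply cites the known result for torsion modules (Schmidt, Gonz\'alez-Acu\~na--Short, Riley, L\"uck). You instead propose to re-prove the torsion module case from scratch, identifying $|\BZ[t]/(f,t^\ell-1)|$ with $|\Res(f,t^\ell-1)|=\prod_{\zeta^\ell=1}|f(\zeta)|$ and passing to the Mahler integral via Riemann sums controlled by a Gelfond--Baker lower bound. That is a valid and more self-contained strategy, essentially reproducing the classical proof the paper invokes. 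It buys independence from the cited literature at the cost of having to handle the analytic subtleties (logarithmic singularities, cyclotomic factors of $f$) yourself.

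However, there is a genuine gap in your treatment of the torsion-free part. You claim the cokernel $N=F/M'$ of the embedding $M'=M/\Tor(M)\hookrightarrow F=\fQ^r$ ``contributes only $\Tor^\fQ_1$ terms bounded by Lemma \ref{l7}, of negligible growth.'' Lemma \ref{l7} requires the coefficient module $Q$ to be \emph{finite}; here the coefficient module is $\BZ[\BZ/\ell]$, which is infinite, so the lemma gives no bound at all. Moreover the term in question, $\Tor_1^\fQ(N,\BZ[\BZ/\ell])={}_{(1-t^\ell)}N$, is in general infinite (e.g.\ if $N$ has a cyclotomic factor $\fQ/(\Phi_d)$ with $d\mid\ell$), so even finiteness, let alone a bound, is not automatic. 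What one actually needs is that the $\BZ$-\emph{torsion} of ${}_{(1-t^\ell)}N$ is bounded independently of $\ell$. The paper achieves this by choosing a universal annihilator $f$ of $N$, splitting off its cyclotomic part $f_1$, observing ${}_{(1-t^\ell)}N\subset{}_{f_1}N$, and noting that $F/f_1F$ (hence ${}_{f_1}N$) is a finitely generated $\BZ$-module because $f_1$ is monic. Your argument would need to be repaired along these lines; as written it is not correct. The rest of the reduction (pseudo-isomorphism of $\Tor M$ to $\bigoplus\fQ/(f_i)$, finiteness of pseudo-zero modules over $\fQ$, absorbing finite errors via Lemma \ref{l8}) is sound, and the final resultant/Riemann-sum step, while you correctly flag that roots of unity in the zero set of $f$ and near-zeros on $\BS$ must be handled with the Gelfond--Baker estimate, is standard and could be filled in.
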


\begin{proof} The reason the case $n=1$ is easy is  $\BZ[\BZ/\ell] = \fQ/(1-t^\ell)$, with $(1-t^\ell)$ a principal ideal.

For an $\fQ$-module $N$ and an element $a\in \fQ$ let $_aN$ be the $a$-torsion of $N$:
$$ _aN = \{ x \in N \mid ax =0\}.$$
A homological interpretation of $_aN$ is  ${\operatorname{Tor}}_1^\fQ(N, \fQ/(a)) = \ _aN$.
If $a|b$ then $_aN \subset\  _bN$.
If $N$ is a finitely-generated torsion module, then there is $b \in \fQ$, called a {\em universal annihilator} of $N$, such that for every $a \in \fQ$,
$$ _aN = \ _{\gcd(b,a)}N.$$
For example, such  $b$ can be the product of all the generators of all prime ideals associated to $N$. One can also define $b$ as the least common multiple of
 the family of annihilators of a generating family for $N$.

Since $M'=M/\Tor(M)$ is torsion free,  by \cite[Chapter VII]{Bourbaki},  there is a free $\fQ$-module $F$ such that $M'\subset F$ and $F/M'$ is a torsion module.
Let $f$ be a universal annihilator of $F/M'$. Decompose
$f= f_1 f_2$, where $f_1$ is the product of all cyclotomic factors (with multiplicity) in the prime decomposition  of $f$.
The identity map $F \to F$ descends to a surjective map
$$ F/(f_1F) \twoheadrightarrow \  _{f_1}(F/M').$$
Since $f_1$ is monic, $F/f_1F$ is a finitely generated $\BZ$-module. It follows that $ _{f_1}(F/M')$ is a finitely generated $\BZ$-module, hence its $\BZ$-torsion part is a finite set.

Tensoring the exact sequence
$$ 0 \to M' \to F \to F/M' \to 0$$
with $\BZ[\BZ/\ell]$, we get
$$ 0 \to {\operatorname{Tor}}_1^\fQ(F/M', \BZ[\BZ/\ell] ) \to M' \otimes \BZ[\BZ/\ell]   \to F \otimes\BZ[\BZ/\ell]  \to (F/M') \otimes\BZ[\BZ/\ell]  \to 0.$$
Since $F$ is a free $\fQ$-module, the third term is a free $\BZ$-module. It follows that the $\BZ$-torsions of the first and the second terms are the same
$$\Tor_\BZ \big( {\operatorname{Tor}}_1^\fQ(F/M', \BZ[\BZ/\ell] )\big)  = \Tor_\BZ (M' \otimes \BZ[\BZ/\ell]).$$
Note that
$$ {\operatorname{Tor}}_1^\fQ(F/M', \BZ[\BZ/\ell]) = \  _{(1-t^\ell)}(F/M')$$
 is a subset of $\ _{f_1}(F/M')$ since $1-t^\ell$ is a product of cyclotomic polynomial.
Since $ |\Tor_\BZ( \, _{f_1}(F/M'))| $ is finite  and does not depend on $\ell$, we conclude that $\Tor_\BZ (M' \otimes \BZ[\BZ/\ell])$ is finite and bounded from above.

 Tensoring the exact sequence
$$ 0 \to \Tor(M) \to M \to M' \to 0$$
with $\BZ[\BZ/\ell] = \fQ/(1-t^\ell)$, taking into account ${\operatorname{Tor}}_1^\fQ(M',\fQ/(1-t^\ell)=0$, we have
$$ 0  \to \Tor(M) \otimes \BZ[\BZ/\ell]  \to M \otimes \BZ[\BZ/\ell]  \to M' \otimes \BZ[\BZ/\ell] \to 0.$$
Since the $\BZ$-torsion of the last term is bounded, we see that the $\BZ$-torsion parts of the first two terms have the same growth, i.e. $M \sim \Tor(M)$.
The Proposition now follows from the case of torsion modules, which was known \cite{Schmidt} (see also \cite{GS,Riley,Luck_book}).
\end{proof}

\subsection{Converging sequences}
\begin{lemma} Let $M$ be a finitely generated $\cR$-module, and  $\bks\in \BZ^n, s= 1,2,\dots$ be any sequence such that $\lim _{s\to \infty} \la \bks \ra =\infty$ and $\gcd(\ks_1,\dots,\ks_n)=1$ for $s \ge 1$.
For each positive integer $j$ define the subgroup $\Gamma_{s,j} \subset \BZ^n$ by
$$\Gamma_{s,j}= (\bks)^\perp  + j\, \bks.$$
For every $s$ there exists an integer $\eta_s >0$ such that for every sequence $j_s > \eta_s$, we have

 $$ \lim_{s\to \infty } \frac{\log |\Tor_\BZ \left( M \otimes \BZ[A_{\Gamma_{s,j_s}}]    \right)  |}{|\BZ^n/\Gamma_{s,j_s} |} = \BM(\Delta(M)).$$
 \label{l7401}
\end{lemma}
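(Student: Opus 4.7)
\vspace{2mm}\noindent\textbf{Proof plan.} The idea is to use the projection $\tau_s := \tau_{\bks}: \cR \to \fQ$ to reduce Lemma \ref{l7401} to the one-variable case (Proposition \ref{pT31}), and then to invoke Proposition \ref{p5444} to identify the resulting limit with $\BM(\Delta(M))$ by a diagonal argument.

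First I would verify that $\BZ[A_{\Gamma_{s,j}}]$ is naturally isomorphic to the cyclic group ring $\BZ[\BZ/N_{s,j}]$, where $N_{s,j} := j\|\bks\|^2$. The homomorphism $\BZ^n \to \BZ$ given by $\bm \mapsto \bm \cdot \bks$ has kernel $(\bks)^\perp$; since $\gcd(\ks_1, \dots, \ks_n) = 1$ it is surjective, so it descends to an isomorphism $\BZ^n/(\bks)^\perp \cong \BZ$ sending the class of $\bks$ to $\|\bks\|^2$. Quotienting further by the class of $j\bks$ gives $A_{\Gamma_{s,j}} \cong \BZ/N_{s,j}$, and in particular $|\BZ^n/\Gamma_{s,j}| = N_{s,j}$. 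At the level of group rings, $\tau_s$ factors through a surjection $\cR \twoheadrightarrow \BZ[\BZ/N_{s,j}]$ whose kernel is precisely $I(\Gamma_{s,j})$, so
$$M \otimes_\cR \BZ[A_{\Gamma_{s,j}}] \;\cong\; \Ms \otimes_\fQ \BZ[\BZ/N_{s,j}],$$
where $\Ms = M \otimes_\cR \fQ$ is a finitely generated $\fQ$-module.

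Next, fixing $s$ and applying Proposition \ref{pT31} to $\Ms$, I would obtain
$$\lim_{j\to\infty} \frac{\log\bigl|\Tor_\BZ(\Ms \otimes \BZ[\BZ/N_{s,j}])\bigr|}{N_{s,j}} = \BM(\Delta(\Ms)),$$
since $N_{s,j} \to \infty$ with $j$ along a subsequence of $\ell$'s to which Proposition \ref{pT31} applies. Hence one can pick an integer $\eta_s \geq 1$ so that for every $j > \eta_s$,
$$\biggl|\frac{\log\bigl|\Tor_\BZ(\Ms \otimes \BZ[\BZ/N_{s,j}])\bigr|}{N_{s,j}} - \BM(\Delta(\Ms))\biggr| < \frac{1}{s}.$$

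Finally, since $\la \bks \ra \to \infty$, Proposition \ref{p5444} gives $\BM(\Delta(\Ms)) \to \BM(\Delta(M))$ as $s \to \infty$. Combining the two estimates by the triangle inequality, for any sequence $j_s > \eta_s$ one gets
$$\lim_{s\to\infty}\frac{\log|\Tor_\BZ(M \otimes \BZ[A_{\Gamma_{s,j_s}}])|}{|\BZ^n/\Gamma_{s,j_s}|} = \BM(\Delta(M)),$$
which is exactly the claim. I expect the only subtle point to be the ring-theoretic identification in the first paragraph, where the coprimality hypothesis $\gcd(\ks_i)=1$ is essential; beyond that, the argument is a clean diagonal combination of the $n=1$ torsion-growth asymptotic (Proposition \ref{pT31}) with the Bombieri--Zannier/Lawton approximation of Mahler measures (Proposition \ref{p5444}), and no further obstacle is anticipated.
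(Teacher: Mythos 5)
Your proposal follows essentially the same route as the paper: identify $A_{\Gamma_{s,j}} \cong \BZ/(j\|\bks\|^2)$ via the dot-product map (using coprimality for surjectivity), obtain $M \otimes_\cR \BZ[A_{\Gamma_{s,j}}] \cong \Ms \otimes_\fQ \BZ[\BZ/(j\|\bks\|^2)]$, apply Proposition \ref{pT31} for fixed $s$ to choose $\eta_s$, and then diagonalize using Proposition \ref{p5444} to pass $\BM(\Delta(\Ms)) \to \BM(\Delta(M))$. This is correct and matches the paper's argument step for step.
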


\begin{proof}


 It is easy to see that for any  $\bk=(k_1,\dots,k_n) \in \BZ^n$ with $\gcd(k_1,\dots,k_n)=1$, and $0< j \in \BZ$,
  the map $\bm \to \bm \cdot \bk \pmod {j}$ is an isomorphism between $\BZ^n /(\bk^\perp \oplus j \bk  ) $ and $\BZ/(j|\bk|^2)$.


It follows that, as $\cR$-modules,

$$ M \otimes_\cR \BZ[\BZ^n /\Gamma_{s,j}]
\cong  \Ms \otimes_\fQ \BZ[\BZ/(j \ |\bks|^2)]
.$$

Since $|\BZ^n/\Gamma_{s,j} |= j \, |\bks|^2$, one has

\begin{align}
\lim_{j\to \infty} \frac{\log |\Tor_\BZ \left( M \otimes \BZ[A_{\Gamma_{s,j}}]    \right)  |}{|\BZ^n/\Gamma_{s,j} |} & = \lim_{j\to \infty} \frac{|\Tor_\BZ \left( \Ms \otimes_\fQ \BZ[\BZ/(j \|\bks||^2) ]    \right)  |}{j |\bks|^2},  \notag  \\
\lim_{j\to \infty} \frac{\log |\Tor_\BZ \left( M \otimes \BZ[A_{\Gamma_{s,j}}]    \right)  |}{|\BZ^n/\Gamma_{s,j} |} &=  \BM\big( \Delta(\Ms)   \big) \quad \text{by Proposition \ref{pT31}}.
\label{e802}
\end{align}

Let  $ a_{s,j}$ be the left hand side of \eqref{e802}. From \eqref{e802}, for fixed $s$, there is $\eta_s >0$  such that if $j > \eta_s$, then
$$ | a_{s, j} - \BM\big( \Delta(\Ms) | < 1/s.$$

It is clear if $j_s > \eta_s$, then
$$\lim_{s\to \infty } a_{s,j_s} = \lim_{s\to\infty } \BM\big( \Delta(\Ms)) = \BM\big( \Delta(M)),$$
 where the last identity follows from Proposition \ref{p5444}.
\end{proof}

\subsection{Proof of Theorem \ref{T8}}

Assume the sequence $\bks$ of Lemma \ref{l7401} satisfies $\ks_i \neq 0$ for $i=1,\dots, n$. If we choose $j_s$ divisible by the product $\ks_1 \dots \ks_n$,  then $d_i(\Gamma_{s,j_s})= |j_s/\ks_i|$, and
$$\dd(\Gamma_{s,j_s}) = \dd(1/|\ks_1|,\dots,1/|\ks_n|).$$
 Thus Theorem \ref{T8} follows from Lemma \ref{l7401} and the following.

\begin{lemma} Suppose $\kappa \in S^{n-1}_+$. There exists $\bk^{(s)} =(\ks_1,\dots,\ks_n)  \in \BZ^n$ such that

(i) $\ks_1,\dots,\ks_n >0$  and have greatest common divisor $1$.

(ii) $\lim_{s\to \infty }  \dd (1/\ks_1,\dots,1/\ks_n ) = \kappa$.

(iii) $\lim_{s\to \infty } \la \bks \ra = \infty$.

\end{lemma}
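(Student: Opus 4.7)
My plan is to establish the following sharper form of the lemma: for each integer $N \geq 1$, there exists a primitive vector $\bk \in \BZ_{>0}^n$ (with $\gcd(k_1,\dots,k_n) = 1$) satisfying $|\dd(1/k_1,\dots,1/k_n) - \kappa| < 1/N$ and $\la \bk \ra > N$; then setting $\bk^{(s)}$ equal to the vector produced for $N=s$ automatically yields a sequence meeting (i), (ii), and (iii) simultaneously.

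Fix $N$. I would first introduce the open cone
\[
U_N := \{v \in \BR^n_{>0} \;:\; |\dd(1/v_1,\dots,1/v_n) - \kappa| < 1/N\},
\]
which is invariant under positive scaling. Its intersection with the unit sphere in $\BR^n_{>0}$ is a nonempty open set of positive $(n-1)$-dimensional measure: this is immediate when all $\kappa_i>0$, and when some $\kappa_i$ vanish one exhibits an explicit open product region inside $U_N$ by taking $v_i$ very large in the vanishing coordinates and of controlled ratios elsewhere. A standard lattice-counting estimate then yields
\[
\#\{v \in U_N \cap \BZ^n : |v| \leq T\} = c_N T^n + O(T^{n-1}) \qquad \text{as } T\to\infty,
\]
with $c_N > 0$.

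Next I would estimate the number of $v \in \BZ^n \cap B(T)$ with $\la v \ra \leq N$. Any such $v$ is orthogonal to some $0 \neq w \in \BZ^n$ with $|w| \leq N$. For a fixed $w$, the integer points of $w^\perp$ form a rank-$(n-1)$ sublattice whose covolume (in the induced Euclidean metric on $w^\perp\otimes\BR$) equals $|w|/\gcd(w)$, so $B(T)$ contains $O(T^{n-1}\gcd(w)/|w|)$ of them. Summing over the $O(N^n)$ choices of $w$ gives an upper bound of $O(N^{n-1} T^{n-1})$ (with at most an extra logarithmic factor when $n=2$), which is negligible compared to $c_N T^n$ once $T$ is chosen polynomially large in $N$. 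Hence there exists $v \in U_N \cap \BZ^n$ with $\la v \ra > N$. Letting $d = \gcd(v_1,\dots,v_n)$ and $\bk := v/d$, the coordinates remain positive integers, $\gcd(\bk)=1$, $\bk^\perp = v^\perp$ so $\la \bk \ra = \la v \ra > N$, and rescaling by the positive constant $d$ preserves the direction of reciprocals, so $\bk \in U_N$.

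The main obstacle is verifying the density estimate $c_N>0$ when $\kappa$ lies on the boundary of $S^{n-1}_+$; there one must check by explicit inspection that $U_N$ contains a full-dimensional open cone, translating $|\dd(1/v)-\kappa|<1/N$ into inequalities of the form ``$v_i$ is much larger than $v_j$ for certain pairs $(i,j)$,'' and exhibiting a product box satisfying these. Once this is in place, the geometry-of-numbers counting above is routine, and the counting approach has the advantage of being independent of special number-theoretic inputs such as primes in arithmetic progressions or simultaneous diophantine approximation.
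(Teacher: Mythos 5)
Your argument is correct, and it takes a genuinely different route from the paper. The paper's proof is a short topological density argument: it observes that the set $Q$ of rational directions $\dd(k_1,\dots,k_n)$ with $k_i>0$ coprime is dense in $S^{n-1}_+$, that removing the finitely many great spheres $\cL_s = \bigcup_{|\bp|\le s}\bp^\perp$ keeps $Q\setminus\cL_s$ dense (a closed nowhere-dense set cannot swallow a dense set on any ball), and that the reciprocal map $\Inv(x)=\dd(1/x_1,\dots,1/x_n)$ is an involutive diffeomorphism of the open octant $S^{n-1}_{++}$, so $\Inv(Q\setminus\cL_s)$ is dense in $S^{n-1}_{++}$ and hence in its closure $S^{n-1}_+$; picking a point there within $1/s$ of $\kappa$ and pulling back yields $\bk^{(s)}$, with $\la\bk^{(s)}\ra>s$ automatic from $\bk^{(s)}\notin\cL_s$. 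You instead run a geometry-of-numbers count: show the reciprocal cone $U_N$ has positive angular measure (which requires the boundary case analysis you flag; it does work out, since near $\kappa$ with some $\kappa_j=0$ the condition becomes an open set of ratio constraints), count $\asymp c_N T^n$ lattice points of $U_N$ in $B(T)$ versus $O(N^{n}T^{n-1})$ lattice points with $\la\cdot\ra\le N$, and pigeonhole; the final reduction to a primitive vector is handled correctly since $\bk^\perp=v^\perp$ and $\dd(1/\bk)=\dd(1/v)$. Your approach is heavier but more quantitative — it yields an explicit polynomial bound in $N$ on how far out one must look for $\bk^{(s)}$ — whereas the paper's argument is non-effective but shorter and avoids any measure estimates. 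One caveat: your stated advantage of "being independent of special number-theoretic inputs" is not a real gain over the paper, since the paper's proof uses only density of rationals and smoothness of $\Inv$; neither proof invokes primes in progressions or diophantine approximation.
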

\begin{proof} Let $S^{n-1}_{++}$ be the subset of $S^{n-1}$ consisting of points with all positive coordinates. Let $\Inv: S^{n-1}_{++} \to S^{n-1}_{++}$ be the map defined by
$$\Inv(x_1,\dots,x_n)=\dd(1/x_1,\dots,1/x_n).$$
It is clear that $\Inv$ is an involution, and hence is a auto-diffeomorphism of $S^{n-1}_{++}$.

The set $Q$ of all points of the form $\dd(k_1,\dots,k_n)$, with $k_1,\dots,k_n$ positive and co-prime, is dense in
$S^{n-1}_+$. If $\cL$ is a finite collection of hyperplanes in $\BR^n$, then
$Q\setminus \cL$ is still dense in $S^{n-1}_+$.

For $s \ge 1$ let  $P_s$ be the set of all points in $\BZ^n$ having norm $\le s$, and
$$\cL_s = \bigcup_{s \in P_s} \bk^\perp.$$
By definition , if $\bk \not \in \cL_s$, then $\la \bk \ra  > s$.
\def\bxs{{\mathbf x}^{(s)}}

$Q\setminus \cL_s$ is dense in $S^{n-1}_+$, hence so is $\Inv(Q \setminus \cL_s)$.  This implies there is $\bxs\in Q \setminus \cL_s$ such that
\be
 || \Inv(\bks) - \kappa|| < 1/s.
 \label{elast}
 \ee
By definition, $\bxs= \dd(\bks)$ for some $\bks=(\ks_1,\dots, \ks_n)$ with positive and co-prime $\ks_i$. Since $\bks \not \in \cL_s$, we have
$\la \bks \ra > s$, which establishes Property (ii).
Inequality \eqref{elast} shows that Property (iii) also holds.
\end{proof}

\end{document}